\author[N. Attia,\; B. Selmi]
{Najmeddine Attia,\; Bilel Selmi}  
\address{\newline Analysis, Probability and Fractals Laboratory LR18ES17\\ Department of Mathematics, Faculty of
Sciences of Monastir, University of Monastir, 5000-Monastir,
Tunisia}
\email{najmeddine.attia@gmail.com}
\email{ bilel.selmi@fsm.rnu.tn}
\subjclass[2000]{ 28A78, 28A80.} \keywords{ Multifractal analysis,
multifractal formalism, multifractal Hausdorff measure,
multifractal packing measure, Hewitt-Stromberg measures, Hausdorff
dimension, packing dimension, doubling measures, inhomogeneous
multinomial measures, Moran measures. }
\newcommand{\R}{\mathbb R}
\newcommand{\N}{\mathbb N}
\newtheorem{theorem}{Theorem}
\newtheorem{lemma}{Lemma}
\newtheorem{proposition}{Proposition}
\newtheorem{corollary}{Corollary}
\newtheorem{definition}{Definition}
\newtheorem{remark}{Remark}
\DeclareMathOperator{\supp}{supp} \DeclareMathOperator{\Dim}{Dim}
 \numberwithin{equation}{section}
\title[ A multifractal formalism for Hewitt-Stromberg measures]{A multifractal formalism for Hewitt-Stromberg measures}
\begin{document}

\maketitle
\begin{abstract}
In the present work, we give a new {\it multifractal formalism} for
which the classical multifractal formalism does not hold. We
precisely  introduce and study a multifractal formalism based on the
Hewitt-Stromberg measures and that this formalism is completely
parallel to Olsen's multifractal formalism which based on the
Hausdorff and packing measures.

\end{abstract}

\maketitle

                                                                  \section{Introduction}

In certain circumstances, a measure $\mu$ gives rise to sets of
points where $\mu$ has local density of exponent $\alpha$. The
dimensions of these sets indicate the distribution of the
singularities of the measure. To be more precise, for a finite
measure $\mu$ on $\mathbb{R}^n$, the pointwise dimension at $x$ is
defined as follows
$$
{\alpha}_{\mu}(x)=\lim_{r\to 0}\frac{\log \mu(B(x,r))}{\log r},
$$
whenever this limit exists. For $\alpha\geq0$, define
$$
E(\alpha)=\Big\{x\in\supp\mu\;\big|\;\quad
{\alpha}_{\mu}(x)=\alpha\Big\}
$$
where $B(x,r)$ is the closed ball with center $x$ and radius $r$.
The set $E(\alpha)$ may be thought of as the set where the {\it
local dimension} of $\mu$ equals $\alpha$ or as a multifractal
component of $\supp\mu$. The main problem in multifractal analysis
is to estimate the size of $E(\alpha)$. This is done by calculating
the functions $$f_\mu(\alpha)=\dim_H(E(\alpha))\quad\text{ and}\quad
F_\mu(\alpha)=\dim_P(E(\alpha))\;\;\text{ for} \;\;\alpha\geq0.$$
These functions are generally known as the {\it multifractal
spectrum} of $\mu$ or {\it the singularity spectrum } of the measure
$\mu$. One of the main problems in multifractal analysis is to
understand the multifractal spectrum and the R\'{e}nyi dimensions
and their relationship with each other. During the past 25 years
there has been an enormous interest in computing the multifractal
spectra of measures in the mathematical literature. Particularly,
the multifractal spectra of various classes of measures in Euclidean
space $\mathbb{R}^n$ exhibiting some degree of self-similarity have
been computed rigorously. The reader can be referred to the paper
\cite{Ol1}, the textbooks \cite{F, Pe} and the references therein.
Some heuristic arguments using techniques of Statistical Mechanics
(see \cite{HJKPS}) show that the singularity spectrum should be
finite on a compact interval, noted by $\text{Dom}(\mu)$, and is
expected to be the Legendre transform conjugate of the
$L^q$-spectrum, given by
$$
\tau_\mu(q)=\displaystyle\lim_{r\to0} \frac{\log \left(\sup\left\{
\displaystyle \sum_i \mu(B(x_i,r))^q\right\}\right)}{-\log r}
$$
where the supremum is taken over all centered packing
$\big(B(x_i,r)\big)_i$ of $\supp\mu$. That is, for all $\alpha\in
\text{Dom}(\mu)$,
\begin{eqnarray}\label{formalismtau}
f_\mu(\alpha)=\inf_{q\in\mathbb{R}} \Big\{\alpha
q+\tau_\mu(q)\Big\}=\tau_\mu^*(\alpha).
\end{eqnarray}

The multifractal formalism \eqref{formalismtau} has been proved
rigorously for random and non-random self-similar measures \cite{AP,
Da2, Ol1, O3, OlS}, for self-conformal measures \cite{Feng, Feng1,
Feng2, Feng3, KG, Pa}, for self-affine measures \cite{Barral,
Barral-Feng, Barral-Mensi, Barral-Seuret, Falconer, Falconer1, Ki,
O4} and for Moran measures \cite{W, W1, W3, W4}. We note that the
proofs of the multifractal formalism \eqref{formalismtau} in the
above-mentioned references \cite{AP, BBJ, BB, BBH, BMP,  Ki, KG,
Ol1, O3, O4, Pa} are all based on the same key idea. The upper bound
for $f_\mu(\alpha)$ is obtained by a standard covering argument
(involving Besicovitch's Covering Theorem or Vitali's Covering
Theorem). However, its lower bound is usually much harder to prove
and is related to the existence of an auxiliary measure (Gibbs
measures) which is supported by the set to be analysed. In an
attempt to develop a general theoretical framework for studying the
multifractal structure of arbitrary measures, Olsen \cite{Ol1},
Pesin \cite{Pes} and Peyri\`{e}re \cite{JP} suggested various ways
of defining an auxiliary measure in a very general setting. This
formalism was motivated by Olsen's wish to provide a general
mathematical setting for the ideas presented by the physicists
Halsey et al. in their seminal paper \cite{HJKPS}. In fact, they
have been interested in the concept of multifractal spectrum, that
is an interesting geometric characteristic for discrete and
continuous models of statistical physics. An important thing which
should be noted is that there are many measures for which the
multifractal formalism does not hold (some examples could be found
in \cite{BJ, BBH, Ol1, W4}). An imported question, in which several
theorists are interested, is: can we find a necessary and sufficient
condition for the multifractal formalism to hold? Another one, asked
by Olsen in \cite{Ol1} is: which functions give more information
about a multifractal measure, the dimension functions $b_\mu$ and
$B_\mu$ or the spectra functions $f_\mu$ and $F_\mu$? Olsen gives
examples of measures where the dimension functions can be used to
split measures which have the same spectrum. In doing this, he
implicitly suggests that a return to the physicists' original idea
of calculating the moments of multifractal measures may be the best
way to characterize them.  It always needs some extra conditions to
obtain a minoration for the dimensions of the level sets
$E(\alpha)$. Olsen proved the following statement.
\begin{theorem}\cite{Ol1}
Let $\mu$ be a Borel probability measure on $\mathbb{R}^n$. Define
$\underline{\alpha}=\displaystyle\sup_{0<q} -\frac{{{
b}_\mu(q)}}{q}$ and $\overline{\alpha}=\displaystyle\inf_{0>q}
-\frac{{{ b}_\mu(q)}}{q}$. Then,
$$
\dim_H (E(\alpha))\leq { b}_\mu^*(\alpha)\quad\text{and}\quad \dim_P
(E(\alpha))\leq { B}_\mu^*(\alpha)\quad\text{for all}\quad \alpha\in
(\underline{\alpha}, \;\overline{\alpha}).
$$
\end{theorem}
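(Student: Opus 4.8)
\emph{Proof strategy.} The plan is to reproduce Olsen's original argument, in which both upper bounds are read off from the multifractal Hausdorff measure $\mathcal{H}^{q,t}_\mu$ and the multifractal packing measure $\mathcal{P}^{q,t}_\mu$. Recall that these are built from the $\mu$-weighted $\delta$-cover sums, respectively $\delta$-packing sums, $\sum_i \mu(B(x_i,r_i))^q (2r_i)^t$ over families centred in the set, that $b_\mu(q)$ and $B_\mu(q)$ are by construction their critical exponents on $\supp\mu$, and that $b_\mu^*(\alpha)=\inf_{q\in\R}\big(\alpha q+b_\mu(q)\big)$ and $B_\mu^*(\alpha)=\inf_{q\in\R}\big(\alpha q+B_\mu(q)\big)$. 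Since $E(\alpha)\subseteq\supp\mu$ and both set functions are monotone, $\mathcal{H}^{q,t}_\mu(E(\alpha))\le\mathcal{H}^{q,t}_\mu(\supp\mu)$ and likewise for $\mathcal{P}$. First I would pass to a uniform subset: because ${\alpha}_\mu(x)=\alpha$ holds only pointwise on $E(\alpha)$, for each fixed $\e>0$ I would write $E(\alpha)=\bigcup_{n\ge1}E_n$ with $E_n=\{x\in E(\alpha):r^{\alpha+\e}\le\mu(B(x,r))\le r^{\alpha-\e}\ \text{for all}\ 0<r\le1/n\}$, so that by countable stability of $\dim_H$ and $\dim_P$ it suffices to bound $\dim_H E_n$ and $\dim_P E_n$.

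Next comes the core estimate. Fix $q\in\R$ and $t>b_\mu(q)$; then $\mathcal{H}^{q,t}_\mu(E_n)=0$, so for every $\eta,\delta>0$ there is a centred $\delta$-cover $(B(x_i,r_i))_i$ of $E_n$ with $x_i\in E_n$, $r_i<\min(1/n,\delta)$ and $\sum_i\mu(B(x_i,r_i))^q(2r_i)^t<\eta$. For $q\ge0$ I would use $\mu(B(x_i,r_i))^q\ge r_i^{(\alpha+\e)q}$, and for $q\le0$ the bound $\mu(B(x_i,r_i))^q\ge r_i^{(\alpha-\e)q}$; writing $s=(\alpha\pm\e)q+t$ with the sign matching that of $q$, this gives $\sum_i(2r_i)^s\le 2^{\,s-t}\sum_i\mu(B(x_i,r_i))^q(2r_i)^t<2^{\,s-t}\eta$. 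The hypothesis on $\alpha$ enters precisely here: for $q>0$ the inequality $\alpha>\underline{\alpha}$ forces $\alpha q+b_\mu(q)>0$, and for $q<0$ the inequality $\alpha<\overline{\alpha}$ forces $\alpha q+b_\mu(q)>0$; hence, taking $t$ close to $b_\mu(q)$ and $\e$ small, $s>0$, so $\sum_i(2r_i)^s$ is a genuine $s$-dimensional Hausdorff sum and the estimate shows $\mathcal{H}^s(E_n)=0$, i.e. $\dim_H E_n\le(\alpha\pm\e)q+t$. Letting $t\downarrow b_\mu(q)$, $\e\downarrow0$ and $n\to\infty$ yields $\dim_H E(\alpha)\le\alpha q+b_\mu(q)$ for every $q$ (for $q=0$ this is the trivial bound $\dim_H\supp\mu=b_\mu(0)$), and taking the infimum over $q$ gives $\dim_H E(\alpha)\le b_\mu^*(\alpha)$.

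For the packing statement I would run the same scheme with $\mathcal{P}^{q,t}_\mu$ in place of $\mathcal{H}^{q,t}_\mu$: for $t>B_\mu(q)$ one has $\mathcal{P}^{q,t}_\mu(E_n)=0$, which unwinds to a countable decomposition $E_n=\bigcup_j F_j$ with $\sum_j\overline{\mathcal{P}}^{q,t}_\mu(F_j)$ as small as desired. For each $F_j\subseteq E_n$, bounding $\mu(B(x_i,r_i))^q$ from below exactly as above converts any centred $\delta$-packing sum into a lower bound for $\sum_i(2r_i)^s$ with the same $s>0$, so the ordinary $s$-dimensional packing pre-measure satisfies $\overline{\mathcal{P}}^s(F_j)\le 2^{\,s-t}\overline{\mathcal{P}}^{q,t}_\mu(F_j)$; summing over $j$ gives $\mathcal{P}^s(E_n)=0$, hence $\dim_P E_n\le(\alpha\pm\e)q+t$. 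Letting the parameters run and taking the infimum over $q$ yields $\dim_P E(\alpha)\le B_\mu^*(\alpha)$.

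The hard part will be the bookkeeping around the sign of $q$ and the role of the interval $(\underline{\alpha},\overline{\alpha})$: the inequalities $\mu(B(x,r))^q\ge r^{(\alpha\pm\e)q}$ point in the useful direction only for the matching sign of $q$, and the replacement of a $\mu$-weighted Hausdorff or packing sum by an ordinary one is legitimate only when the resulting exponent $s=(\alpha\pm\e)q+t$ is positive — which is exactly what $\alpha q+b_\mu(q)>0$ (equivalently $\alpha B_\mu$-admissibility) on $(\underline{\alpha},\overline{\alpha})$ secures. A secondary technical point is the passage, in the packing case, from $\mathcal{P}^{q,t}_\mu(E_n)=0$ to a workable countable decomposition with small pre-measures, which relies on the standard comparison between the packing measure and its pre-measure.
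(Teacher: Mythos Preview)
The paper does not itself prove this statement; it is quoted from Olsen~\cite{Ol1} in the Introduction as background. Your argument is correct and is precisely Olsen's method, which the paper reproduces for the Hewitt--Stromberg analogue in its Lemma~2 (feeding into Theorem~\ref{dDmeasure}): pass to the uniform subsets $E_m$ on which the local-dimension bound holds for all $r<1/m$, use the pointwise inequality $\mu(B(x,r))^q\ge r^{(\alpha\pm\e)q}$ with the sign matched to that of $q$ to convert the $\mu$-weighted covering or packing sum into an ordinary one at exponent $(\alpha\pm\e)q+t$, and then let the parameters run and optimise over $q$.
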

In general, such a minoration is related to the existence of an
auxiliary measure which is supported by the set to be analyzed.
Olsen also gives a result in such a way and supposes the existence
of a Gibbs' measure (see \cite{Ol1}) at a state $q$ for the measure
$\mu$, i.e., the existence of a measure $\nu_q$ on $\supp\mu$ and
constants $C> 0$, $\delta > 0$ such that for every $x \in \supp\mu$
and every $0 < r < \delta$,
$$
\frac1C ~\mu(B(x,r))^q ~(2r)^{{ B}_\mu(q)}\leq \nu_q(B(x,r))\leq C~
\mu(B(x,r))^q~ (2r)^{{ B}_\mu(q)}
$$
to conclude that
$$
\dim_H( E(\alpha))=\dim_P (E(\alpha))={ b}_\mu^*(\alpha)={
B}_\mu^*(\alpha),\quad\text{where}\quad \alpha=-{ B}_\mu'(q).
$$
In general, one needs some degree of similarity to prove the
existence of Gibbs measures. For example, in dynamic contexts, the
existence of such measures are often natural. For this reason, Ben
Nasr et al.  in \cite{BBJ, BJ, BB, BBH} improved Olsen's result and
proposed a new sufficient condition that gives the lower bound. For
more details and backgrounds on multifractal analysis as well as
their applications the readers may be referred also to the following
essential references \cite{NB1, NBC, Co, DB, BD1, BD2, MMB, TO1, O2,
Ol, SH1, SB, SBbb, SB1, BBSS, W, W1, W3, W4}.

 \bigskip \bigskip
In \cite{BJ, BBH, Ol1, SH1, W4}, the authors provided some examples
for which the classical multifractal formalism does not hold.
Indeed, for such examples, the functions $b_\mu$ and $B_\mu$ differ
and $\dim_H (E(\alpha))$ and $\dim_P(E(\alpha))$ are given
respectively  by the Legendre transform of $b_\mu$ and $B_\mu$.
Motivated by the above papers, the authors in \cite{NB} introduced
new metric outer measures (multifractal analogues of the
Hewitt-Stromberg measure) $\mathsf{ H}^{q,t}_{\mu}$ and  $\mathsf{ P
}^{q,t}_{\mu}$ lying between the multifractal Hausdorff measure
${\mathcal H}^{q,t}_{\mu}$  and the multifractal  packing measure
${\mathcal P}^{q,t}_{\mu}$, and they used the multifractal density
theorems to prove the decomposition theorem for the regularities of
these measures. In the present paper, we give a new {\it
multifractal formalism} for which the functions $b_\mu$ and $B_\mu$
differ. Actually, the main aim of this work is to introduce and
study a multifractal formalism based on the Hewitt-Stromberg
measures. However,  we point out that this formalism is completely
parallel to Olsen's multifractal formalism introduced in \cite{Ol1}
which based on the Hausdorff and packing measures. Then, we prove
that the lower and upper multifractal Hewitt-Stromberg functions
$\mathsf{b}_\mu$ and $\mathsf{B}_\mu$ are intimately related to the
spectra functions. More precisely, we have
$$
\mathsf{f}_\mu(\alpha):=\underline{\dim}_{MB}(E(\alpha)) \le
{\mathsf b}_\mu^*(\alpha)\quad\text{and}\quad
\mathsf{F}_\mu(\alpha):=\overline{\dim}_{MB}(E(\alpha)) \le {\mathsf
B}_\mu^*(\alpha)\quad\text{for some}\quad \alpha\geq0.
$$
Here $\underline{\dim}_{MB}$ and $\overline{\dim}_{MB}$ denote,
respectively, the lower and the upper Hewitt-Stromberg dimension
(the lower and the upper  modified box-counting dimension), see
Section \ref{sec2.2} for precise definitions of this. One of our
purposes of this paper is to show the following result: if
$\mathsf{H}_\mu^{q, {\mathsf b}_\mu(q)} ( E(- {\mathsf b_\mu}' (q))
) > 0$, then
 $$
 \underline{\dim}_{MB}\left(E\big( - {\mathsf b}_\mu^{'}(q)\big)\right)=  {\mathsf b}_\mu^*\big( - {\mathsf b}_\mu^{'}(q)\big)
 $$
and, if $\mathsf{P}_\mu^{q, {\mathsf B}_\mu(q)} ( E(- {\mathsf
B_\mu}' (q)) ) > 0$, then
 $$
   \overline{\dim}_{MB}\left(E\big(  - {\mathsf B}_\mu^{'}(q)\big)\right) =  {\mathsf B}_\mu^*\big( - {\mathsf B}_\mu^{'}(q)\big).
 $$
Moreover, we  describe a sufficient condition leading to the
equalities
$$
\mathsf{f}_\mu(\alpha)=
\mathsf{F}_\mu(\alpha)={F}_\mu(\alpha)\quad\text{for some}\quad
\alpha\geq0.
$$
Specifically, if we assume that $\mathsf{ H}^{q, \mathsf
B_{\mu}(q)}_{\mu}(\supp\mu)>0$, then
$$
\underline{\dim}_{MB}\left(E\big(- {\mathsf
B}_\mu^{'}(q)\big)\right)=\overline{\dim}_{MB}\left(E\big(- {\mathsf
B}_\mu^{'}(q)\big)\right) = \mathsf  b^*_{\mu}(- {\mathsf
B}_\mu^{'}(q))=\mathsf B^*_{\mu}(- {\mathsf B}_\mu^{'}(q)).
$$
We also observe that this sufficient condition is very close to
being a necessary and sufficient one, see Theorem \ref{SNC}. In
particular, we deal with the case where the lower and upper
multifractal Hewitt-Stromberg functions $\mathsf{b}_\mu$ and
$\mathsf{B}_\mu$ do not necessarily coincide, see Theorem
\ref{newform}.

\bigskip
We will now give a brief description of the organization of the
paper. In the next section we recall the definitions  of the various
fractal and multifractal dimensions and measures investigated in the
paper. The definitions of the Hausdorff and packing measures and the
Hausdorff and packing dimensions are recalled in Section
\ref{sec2.1}, and the definitions of the Hewitt-Stromberg measures
are recalled in Section \ref{sec2.2}, while the definitions of the
Hausdorff and packing measures are well-known, we have,
nevertheless, decided to include these-there are two main reasons
for this: firstly, to make it easier for the reader to compare and
contrast the Hausdorff and packing measures with the less well-known
Hewitt-Stromberg measures, and secondly, to provide a motivation for
the Hewitt-Stromberg measures. Section \ref{sec2.3} recalls the
multifractal formalism introduced in \cite{Ol1}. In Section
\ref{sec2.4} we recall the definitions of the multifractal
Hewitt-Stromberg measures and separator functions, and study their
properties. Section \ref{sec2.5} recalls earlier results on the
values of the multifractal Hausdorff measure, the multifractal
packing measure, the multifractal Hewitt-Stromberg measures and
separator functions; this discussion is included in order to
motivate our main results presented in Section \ref{sec3}. Section
\ref{sec4} contains concrete examples related to these concepts. The
paper is concluded with Section \ref{sec5} that, lists some open
problems.

                                                               \section{Preliminaries and  statements of results}

\subsection{Hausdorff measure, packing measure and dimensions}\label{sec2.1} While
the definitions of the Hausdorff and packing measures and the
Hausdorff and packing dimensions are well-known, we have,
nevertheless, decided to briefly recall the definitions below. There
are several reasons for this: firstly, since we are working in
general metric spaces, the different definitions that appear in the
literature may not all agree and for this reason it is useful to
state precisely the definitions that we are using; secondly, and
perhaps more importantly, the less well-known Hewitt-Stromberg
measures (see Section \ref{sec2.2}) play an important part in this
paper and to make it easier for the reader to compare and contrast
the definitions of the Hewitt-Stromberg measures and the definitions
of the Hausdorff and packing measures it is useful to recall the
definitions of the latter measures; and thirdly, in order to provide
a motivation for the Hewitt-Stromberg measures.

\bigskip
Let $X$ be a metric space, $E \subseteq X$ and $t>0$. The Hausdorff
measure is defined, for $\varepsilon>0$, as follows
$$
\mathcal{H}_\varepsilon^t(E)= \inf\left\{\sum_i
\Big(\text{diam}(E_i)\Big)^t\Big|\;\; E\subseteq\bigcup_i
E_i,\;\;\text{diam}(E_i)<\varepsilon \right\}.
$$
This allows to define first the $t$-dimensional Hausdorff measure
$\mathcal{H}^t(E)$ of $E$ by
$$
\mathcal{H}^t(E)=\sup_{\varepsilon>0}\mathcal{H}_\varepsilon^t(E).
$$
Finally, the Hausdorff dimension $\dim_H(E)$ is defined by
$$
\dim_H(E)=\sup\Big\{t\geq0\;\big|\; \mathcal{H}^t(E)=+\infty\Big\}.
$$

The packing measure  is defined, for $\varepsilon> 0$, as follows
\begin{eqnarray*}
\overline{\mathcal{P}}_\varepsilon^t(E)= \sup\left\{\sum_i
\Big(2r_i\Big)^t\right\},
\end{eqnarray*}
where the supremum is taken over all closed balls $\Big( B(x_i, r_i)
\Big)_i \; \text{such that}\; r_i \leq \varepsilon\; \text{and
with}\;  x_i \in E \; \text{and}\; d(x_i, x_j)\geq \frac{r_i+r_j}2\;
\text{for}\; i\neq j$. The $t$-dimensional packing pre-measure
$\overline{\mathcal{P}}^t(E)$ of $E$ is now defined by
$$
\overline{\mathcal{P}}^t(E)=\sup_{\varepsilon>0}\overline{\mathcal{P}}_\varepsilon^t(E).
$$
This makes us able to define the $t$-dimensional packing measure
${\mathcal{P}}^t(E)$ of $E$ as
$$
{\mathcal{P}}^t(E)=\inf\left\{\sum_i\overline{\mathcal{P}}^t(E_i)\;\Big|\;\;E\subseteq\bigcup_i
E_i\right\},
$$
and the packing dimension $\dim_P(E)$ is defined by
$$
\dim_P(E)=\sup\Big\{t\geq0\;\big|\; \mathcal{P}^t(E)=+\infty\Big\}.
$$

\subsection{Hewitt-Stromberg measures and dimensions}\label{sec2.2}
Hewitt-Stromberg measures were introduced  in \cite[Exercise
(10.51)]{HeSt}. Since then, they  have  been investigated  by
several authors, highlighting their importance in the study of local
properties of fractals and products of fractals. One can cite, for
example \cite{Ha1, Ha2, JuMaMiOlSt, Olll2, Zi}. In particular,
Edgar's textbook \cite[pp. 32-36]{Ed} provides an excellent and
systematic introduction to these  measures. Such measures appear
also appears explicitly, for example, in  Pesin's monograph
\cite[5.3]{Pe} and implicitly in Mattila's text \cite{Mat}. One of
the purposes of this paper is to define and study a class of natural
multifractal generalizations of the Hewitt-Stromberg measures. While
Hausdorff and packing measures are defined using coverings and
packings by families of sets with diameters less than a given
positive number $\varepsilon$, say, the Hewitt-Stromberg measures
are defined using packings of balls with a fixed diameter
$\varepsilon$. For $t>0$, the Hewitt-Stromberg pre-measures are
defined as follows,
$$
\overline{\mathsf{H}}^t(E)=\liminf_{r\to0} N_r(E) \;(2r)^t
$$
and
$$
\overline{\mathsf{P}}^t(E)=\limsup_{r\to0} M_r(E) \;(2r)^t,
$$
where the  covering number $N_r(E)$ of $E$ and the packing number
$M_r(E)$ of $E$ are given by
$$
N_r(E)=\inf\left\{\sharp\{I\}\;\Big|\; \Big( B(x_i, r) \Big)_{i\in
I} \; \text{is a family of closed balls with}\; x_i \in E \;
\text{and}\; E\subseteq \bigcup _i B(x_i, r)\right\}
$$
and
$$
M_r(E)=\sup\left\{\sharp\{I\}\; \Big|\; \Big( B(x_i, r_i)
\Big)_{i\in I} \; \text{is a family of closed balls with}\; x_i \in
E \; \text{and}\; d(x_i, x_j)\geq r\; \text{for}\; i\neq j\right\}.
$$

Now, we define the lower and upper $t$-dimensional Hewitt-Stromberg
measures,  which we denote respectively  by $\mathcal{\mathsf
H}^t(E)$ and $\mathcal{\mathsf P}^t(E)$,
 as follows
$$
{\mathsf{H}}^t(E)=\inf\left\{\sum_i\overline{\mathsf{H}}^t(E_i)\;\Big|\;\;E\subseteq\bigcup_i
E_i\right\}
$$
and
$$
{\mathsf{P}}^t(E)=\inf\left\{\sum_i\overline{\mathsf{P}}^t(E_i)\;\Big|\;\;E\subseteq\bigcup_i
E_i\right\}.
$$

We recall some basic inequalities satisfied by the Hewitt-Stromberg,
the Hausdorff  and the packing measure (see \cite[Proposition
2.1]{JuMaMiOlSt, Olll2})
$$
\overline{\mathsf{H}}^t(E)\leq \overline{\mathsf{P}}^t(E)\leq
\overline{\mathcal{P}}^t(E)
$$
and
$$
{\mathcal{H}}^t(E)\leq{\mathsf{H}}^t(E)\leq {\mathsf{P}}^t(E)\leq
{\mathcal{P}}^t(E).
$$
The lower and upper Hewitt-Stromberg dimension
$\underline{\dim}_{MB}(E)$ and  $\overline{\dim}_{MB}(E)$ are
defined by
$$
\underline{\dim}_{MB}(E)=\inf\Big\{t\geq0\;\Big|\;
{\mathsf{H}}^t(E)=0\Big\}=\sup\Big\{t\geq0\;\Big|\;
{\mathsf{H}}^t(E)=+\infty\Big\}
$$
and
$$
\overline{\dim}_{MB}(E)=\inf\Big\{t\geq0\;\big|\;
{\mathsf{P}}^t(E)=0\Big\}=\sup\Big\{t\geq0\;\big|\;
{\mathsf{P}}^t(E)=+\infty\Big\}.
$$
The lower and upper box dimensions, denoted by
$\underline{\dim}_{B}(E)$ and $\overline{\dim}_{B}(E)$,
respectively, are now defined by
$$
\underline{\dim}_{B}(E)=\liminf_{r\to0}\frac{\log N_r(E)}{-\log
r}=\liminf_{r\to0}\frac{\log M_r(E)}{-\log r}
$$
and
$$
\overline{\dim}_{B}(E)=\limsup_{r\to0}\frac{\log N_r(E)}{-\log
r}=\limsup_{r\to0}\frac{\log M_r(E)}{-\log r}.
$$

These dimensions satisfy the following inequalities,
$$
\dim_H(E)\leq\underline{\dim}_{MB}(E)\leq\overline{\dim}_{MB}(E)\leq\dim_P(E),
$$
$$
\dim_H(E)\leq \dim_P(E)\leq\overline{\dim}_{B}(E)
$$
and
$$\dim_H(E)\leq \underline{\dim}_{B}(E)\leq\overline{\dim}_{B}(E).
$$
The reader is referred to \cite{FKJ} for an excellent discussion of
the Hausdorff dimension, the packing dimension, lower and upper
Hewitt-Stromberg dimension and the box dimensions. In particular, we
have (see \cite{FKJ, MiOl})
$$
\underline{\dim}_{MB}(E)= \inf \left\{ \sup_{i}
\underline{\dim}_{B}(E_i)\;\Big|\;E\subseteq \bigcup_i E_i, \;\; E_i
\;\; \text{are bounded in } \; X\right\}
$$
and
$$
\overline{\dim}_{MB}(E)= \inf \left\{ \sup_{i}
\overline{\dim}_{B}(E_i) \;\Big|\; E\subseteq \bigcup_i E_i, \;\;
E_i \;\; \text{are bounded in } \; X\right\}.
$$

\subsection{Multifractal Hausdorff measure and packing
measure}\label{sec2.3}

We start by introducing the generalized centered Hausdorff measure
${\mathcal H}^{q,t}_{\mu}$ and the generalized packing measure
${\mathcal P}^{q,t}_{\mu}$. We fix an integer $n\geq 1$ and denote
by $\mathcal{P}(\mathbb{R}^n)$ the family of compactly supported
Borel probability measures on $\mathbb{R}^n$. Let
$\mu\in\mathcal{P}(\mathbb{R}^n)$,  $q, t\in\mathbb{R}$, $E
\subseteq{\mathbb R}^n$ and $\delta>0$. We define the generalized
packing pre-measure by
 $$
\overline{{\mathcal P}}^{q,t}_{\mu}(E)
=\displaystyle\inf_{\delta>0}\displaystyle \sup\left\{\sum_i
\mu\big(B(x_i,{r_i})\big)^q
(2r_i)^t\;\Big|\;\Big(B(x_i,r_i)\Big)_i\;\text{is a centered}\;
\delta\text{-packing of}\; E\right\}.
 $$
In a similar way, we define the generalized Hausdorff pre-measure by
$$\overline{{\mathcal H}}^{q,t}_{\mu}(E) =\displaystyle\sup_{\delta>0} \displaystyle\inf\left\{ \sum_i
\mu\big(B(x_i,r_i)\big)^q(2r_i)^t\;\Big|\;\Big(B(x_i,r_i)\Big)_i\;\text{is
a centered}\; \delta\text{-covering of}\; E\right\},
 $$
with the conventions $0^q = \infty$ for $q\leq0$ and $0^q = 0$ for
$q>0$.

 \bigskip
The function $ \overline{{\mathcal H}}^{q,t}_{\mu}$ is
$\sigma$-subadditive but not increasing and the function
$\overline{{\mathcal P}}^{q,t}_{\mu}$ is increasing but not
$\sigma$-subadditive. That is the reason for which  Olsen introduced
the following modifications of the generalized Hausdorff and packing
measures ${\mathcal H}^{q,t}_{\mu}$  and ${\mathcal P}^{q,t}_{\mu}$:
$$
{\mathcal H}^{q,t}_{\mu}(E)=\displaystyle\sup_{F\subseteq
E}\overline{{\mathcal H}}^{q,t}_{\mu} (F)\quad\text{and}\quad
{\mathcal P}^{q,t}_{\mu}(E) = \inf_{E \subseteq \bigcup_{i}E_i}
\sum_i \overline{\mathcal P}^{q,t}_{\mu}(E_i).
 $$

 \bigskip \bigskip
The functions ${\mathcal H}^{q,t}_{\mu}$ and ${\mathcal
P}^{q,t}_{\mu}$ are metric outer measures and thus measures on the
Borel family of subsets of $\mathbb{R}^n$. Moreover, there exists an
integer $\xi\in\mathbb{N}$, such that ${\mathcal
H}^{q,t}_{\mu}\leq\xi{\mathcal P}^{q,t}_{\mu}.$ The measure
${\mathcal H}^{q,t}_{\mu}$ is of course a multifractal
generalization of the centered Hausdorff measure, whereas ${\mathcal
P}^{q,t}_{\mu}$ is a multifractal generalization of the packing
measure. In fact, it is easily seen that, for  $t\geq0$,  one has
$$2^{-t} {\mathcal H}^{0,t}_{\mu}\leq {\mathcal H}^{t}\leq {\mathcal
H}^{0,t}_{\mu}\quad\text{and}\quad{\mathcal P}^{0,t}_{\mu}={\mathcal
P}^{t},$$ where ${\mathcal H}^{t}$ and ${\mathcal P}^{t}$ denote
respectively the $t$-dimensional Hausdorff and  $t$-dimensional
packing measures.

 \bigskip
We now define the family of doubling measures. For
$\mu\in\mathcal{P}(\mathbb{R}^n)$ and $a>1$, we write
 $$
P_a(\mu)=\limsup_{r\searrow0}\left( \sup_{x\in\supp\mu}
\displaystyle\frac{\mu\big(B(x,ar)\big)}{\mu\big(B(x,r)\big)}\right).
 $$
We say that the measure $\mu$ satisfies the doubling condition if
there exists $a>1$ such that $P_a(\mu)<\infty$. It is easily seen
that the exact value of the parameter $a$ is unimportant:
$$P_a(\mu)<\infty, \;\;\text{for some}\;\;  a>1\;\;\text{ if and only
if}\;\; P_a(\mu)<\infty,\;\;\text{ for all}\;\; a>1.$$ Also, we
denote by $\mathcal{P}_D( \mathbb{R}^n) $ the family of Borel
probability measures on $\mathbb{R}^n$ which satisfy the doubling
condition. We can cite as classical examples of doubling measures,
the self-similar measures and the self-conformal ones \cite{Ol1}. In
particular, if $\mu \in \mathcal{P}_D(\mathbb{R}^n)$  then
${\mathcal H}^{q,t}_{\mu}\leq {\mathcal P}^{q,t}_{\mu}.$

 \bigskip
The measures ${\mathcal H}^{q,t}_{\mu}$ and ${\mathcal
P}^{q,t}_{\mu}$ and the pre-measure ${\overline{\mathcal
P}}^{q,t}_{\mu}$ assign in a usual way a multifractal dimension to
each subset $E$ of $\mathbb{R}^n$. They are respectively denoted by
$\dim_{\mu}^q(E)$, $\Dim_{\mu}^q(E)$ and $\Delta_{\mu}^q(E)$ (see
\cite{Ol1}) and satisfy
 $$
\begin{array}{lllcr}
\dim_{\mu}^q(E) &=&\inf \Big\{ t\in\R\;\big|\; \quad {\mathcal
H}^{{q},t}_{\mu}(E) =0\Big\}=\sup \Big\{ t\in\R\;\big|\; \quad
{\mathcal
H}^{{q},t}_{\mu}(E) =+\infty\Big\}, \\ \\
\Dim_{\mu}^q(E) &=& \inf \Big\{  t\in\R\;\big|\; \quad {\mathcal
P}^{{q},t}_{\mu}(E) =0\Big\}=\sup \Big\{  t\in\R\;\big|\; \quad
{\mathcal
P}^{{q},t}_{\mu}(E) =+\infty\Big\}, \\ \\
\Delta_{\mu}^q(E) &=& \inf \Big\{ t\in\R\;\big|\; \quad
\overline{\mathcal P}^{{q},t}_{\mu}(E) =0\Big\}=\sup\Big\{
t\in\R\;\big|\; \quad \overline{\mathcal P}^{{q},t}_{\mu}(E)
=+\infty\Big\}.
\end{array}
 $$

The number $\dim_{\mu}^q(E)$ is an obvious multifractal analogue of
the Hausdorff dimension $\dim_H(E)$ of $E$ whereas $\Dim_{\mu}^q(E)$
and $\Delta_{\mu}^q(E)$ are obvious multifractal analogues of the
packing dimension $\dim_P(E)$ and the pre-packing dimension
$\Delta(E)$ of $E$ respectively. In fact, it follows immediately
from the definitions that
$$
\dim_H(E)=\dim_{\mu}^0(E),\;\;\;\dim_P(E)=\Dim_{\mu}^0(E)\quad\text{and}\quad\Delta(E)=\Delta_{\mu}^0(E).
$$

We define the functions $$b_{\mu}(q)=\dim_{\mu}^q(\supp\mu)
\quad\text{and}\quad B_{\mu}(q)=\Dim_{\mu}^q(\supp\mu).$$ It is well
known that the functions $b_{\mu}$ and $B_{\mu}$ are decreasing and
$B_{\mu}$ is convex and satisfying $b_{\mu}\leq B_{\mu}.$

\subsection{Multifractal Hewitt-Stromberg measures and separator
functions}\label{sec2.4}

In the following, we will set up, for $q, t \in \R$ and $\mu\in
{\mathcal P}(\R^n)$, the lower and upper multifractal
Hewitt-Stromberg measures ${\mathsf{H}}_\mu^{q,t}$ and
${\mathsf{P}}_\mu^{q,t}$.\\ For $E\subseteq \supp \mu$,  the
pre-measure of $E$ is defined by
$$
 {\mathsf C}_\mu^{q,t}(E)= \limsup_{r\to0} M_{\mu,r}^q(E) (2r)^t,
$$
where
$$
M_{\mu,r}^q(E)=\sup \left\{\displaystyle \sum_i
\mu(B(x_i,r))^q\;\Big|\;\Big(B(x_i,r)\Big)_i\;\text{is a centered
packing of}\; E \right\}.
$$
It's clear that  ${\mathsf C}_\mu^{q,t}$ is increasing and ${\mathsf
C}_\mu^{q,t}(\emptyset ) =0$. However it's not $\sigma$-additive.
For this, we introduce  the ${\mathsf{P}}_\mu^{q,t}$-measure defined
by
$$
{\mathsf{P}}_\mu^{q,t}(E)=\inf \left\{\displaystyle \sum_i{\mathsf
C}_\mu^{q,t}(E_i)\;\Big|\; E\subseteq\bigcup_i E_i\; \text{and
the}\; E_i'\text{s are bounded} \right\}.
 $$
In a similar way we define
$$
{\mathsf L}_\mu^{q,t}(E)= \liminf_{r\to0} N_{\mu,r}^q(E) (2r)^t,
$$
where
$$
N_{\mu,r}^q(E)=\inf \left\{\displaystyle \sum_i
\mu(B(x_i,r))^q\;\Big|\;\Big(B(x_i,r)\Big)_i\;\text{is a centered
covering of}\; E \right\}.
$$
Since ${\mathsf L}_\mu^{q,t}$  is  not increasing and not countably
subadditive, one needs a standard modification to get an outer
measure. Hence, we modify the definition as follows
 $$
\overline{\mathsf{H}}_\mu^{q,t}(E)=\inf \left\{\displaystyle
\sum_i{\mathsf L}_\mu^{q,t}(E_i)\;\Big|\; E\subseteq\bigcup_i E_i\;
\text{and the}\; E_i'\text{s are bounded} \right\}
 $$
and
$$
\mathsf{H}_\mu^{q,t}(E)=\sup_{F\subseteq E}\overline
{\mathsf{H}}_\mu^{q,t}(F).
$$

 \bigskip \bigskip
The measure $\mathsf{ H}^{q,t}_{\mu}$ is of course a multifractal
generalization of the lower $t$-dimensional Hewitt-Stromberg measure
${\mathsf{H}}^t$, whereas $\mathsf{ P}^{q,t}_{\mu}$ is a
multifractal generalization of the upper $t$-dimensional
Hewitt-Stromberg measures ${\mathsf{P}}^t$. In fact, it is easily
seen that, for $t>0$,  one has
$$
\mathsf{ H}^{0,t}_{\mu}={\mathsf{H}}^t\quad\text{and}\quad\mathsf{
P}^{0,t}_{\mu}={\mathsf{P}}^t.
$$

The following result describes some of the basic properties of the
multifractal Hewitt-Stromberg measures including the fact that
${\mathsf H}_\mu^{q,t}$ and ${\mathsf P}_\mu^{q,t}$ are Borel metric
outer measures and summarises the basic inequalities satisfied by
the multifractal Hewitt-Stromberg measures, the generalized
Hausdorff measure and the generalized packing measure.
\begin{theorem}\label{HHPP}\cite{NB}
Let $q, t \in \R$ and  $\mu \in {\mathcal P}(\R^n)$. Then  for every
set $E\subseteq \mathbb{R}^n$ we have
\begin{enumerate}
\item the set functions $\mathsf{H}_\mu^{q,t}$ and $\mathsf{P}_\mu^{q,t}$ are
metric outer measures and thus they are measures on the Borel
algebra.
\item There exists an integer $\xi\in\mathbb{N}$, such that $$
{\mathcal H}^{q,t}_{\mu}(E)\leq \mathsf{H}_\mu^{q,t}(E)\leq \xi
\mathsf{P}_\mu^{q,t}(E)\leq \xi {\mathcal P}^{q,t}_{\mu}(E).$$
\item When $q \le 0$ or $q>0$ and $\mu \in {\mathcal P}_D (\R^n)$, we have
$${\mathcal H}^{q,t}_{\mu}(E)\leq \mathsf{H}_\mu^{q,t}(E)\leq  \mathsf{P}_\mu^{q,t}(E)\leq {\mathcal P}^{q,t}_{\mu}(E).$$
\end{enumerate}
\end{theorem}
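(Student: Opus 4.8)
The plan is to mirror, step by step, Olsen's construction of $\mathcal H^{q,t}_\mu$ and $\mathcal P^{q,t}_\mu$ recalled in Section \ref{sec2.3}, the only novelty being that the generating set functions $N^q_{\mu,r}$ and $M^q_{\mu,r}$ are evaluated on balls of one fixed radius $r$. For (1), I would first observe that $\overline{\mathsf H}^{q,t}_\mu$ and $\mathsf P^{q,t}_\mu$ are precisely the outer measures obtained from the premeasures $\mathsf L^{q,t}_\mu$ and $\mathsf C^{q,t}_\mu$ by the Munroe (Method I) construction $\tau\mapsto\bigl(E\mapsto\inf\{\sum_i\tau(E_i):E\subseteq\bigcup_iE_i,\ E_i\ \text{bounded}\}\bigr)$; since $\mathsf L^{q,t}_\mu(\emptyset)=\mathsf C^{q,t}_\mu(\emptyset)=0$, this is automatically an outer measure. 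Next I would check that the regularisation $\mathsf H^{q,t}_\mu(E)=\sup_{F\subseteq E}\overline{\mathsf H}^{q,t}_\mu(F)$ is still an outer measure (monotonicity is clear, and for $E\subseteq\bigcup_kA_k$ and $F\subseteq E$ one writes $F=\bigcup_k(F\cap A_k)$ and uses countable subadditivity of $\overline{\mathsf H}^{q,t}_\mu$). For the metric property the key remark is that if $d(A,B)=\rho>0$ then, for every $r<\rho$, no ball of radius $r$ centred in $A\cup B$ meets both $A$ and $B$; hence every centred covering and every centred packing of $A\cup B$ splits into one of $A$ and one of $B$, so $N^q_{\mu,r}(A\cup B)=N^q_{\mu,r}(A)+N^q_{\mu,r}(B)$ and $M^q_{\mu,r}(A\cup B)=M^q_{\mu,r}(A)+M^q_{\mu,r}(B)$ for all $r<\rho$. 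Letting $r\to0$ and pushing this through the $\liminf$/$\limsup$, the Method I construction and the regularisation — the routine part, handled as in the standard treatment of Hewitt--Stromberg measures — gives $\mathsf H^{q,t}_\mu(A\cup B)=\mathsf H^{q,t}_\mu(A)+\mathsf H^{q,t}_\mu(B)$, and likewise for $\mathsf P^{q,t}_\mu$; Carath\'eodory's criterion then makes every Borel set measurable.

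For the two extreme inequalities in (2) I would argue at the premeasure level. A centred covering of $E$ by balls all of radius $r\le\delta$ is in particular a centred $\delta$-covering, so $\inf\{\sum_i\mu(B(x_i,r_i))^q(2r_i)^t:\ \text{centred }\delta\text{-covering of }E\}\le N^q_{\mu,r}(E)(2r)^t$; taking the infimum over $r\in(0,\delta)$ and then $\sup_{\delta>0}$ yields $\overline{\mathcal H}^{q,t}_\mu(E)\le\liminf_{r\to0}N^q_{\mu,r}(E)(2r)^t=\mathsf L^{q,t}_\mu(E)$. Since $\overline{\mathcal H}^{q,t}_\mu$ is $\sigma$-subadditive, $\overline{\mathcal H}^{q,t}_\mu(E)\le\sum_i\overline{\mathcal H}^{q,t}_\mu(E_i)\le\sum_i\mathsf L^{q,t}_\mu(E_i)$ for any countable cover, so $\overline{\mathcal H}^{q,t}_\mu\le\overline{\mathsf H}^{q,t}_\mu$ and, taking $\sup_{F\subseteq E}$, ${\mathcal H}^{q,t}_\mu(E)\le\mathsf H^{q,t}_\mu(E)$. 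Dually, a fixed-radius-$r$ centred packing is a centred $\delta$-packing for every $\delta\ge r$, so $\overline{\mathcal P}^{q,t}_\mu(E)\ge\limsup_{r\to0}M^q_{\mu,r}(E)(2r)^t=\mathsf C^{q,t}_\mu(E)$; since $\mathsf P^{q,t}_\mu$ and ${\mathcal P}^{q,t}_\mu$ arise from $\mathsf C^{q,t}_\mu$ and $\overline{\mathcal P}^{q,t}_\mu$ by an infimum over (bounded) covers, this gives $\mathsf P^{q,t}_\mu(E)\le{\mathcal P}^{q,t}_\mu(E)$.

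The substantive part is the middle inequality $\mathsf H^{q,t}_\mu\le\xi\,\mathsf P^{q,t}_\mu$ and its sharpening in (3). The plan is to prove the Besicovitch-type bound $N^q_{\mu,r}(E)\le\xi\,M^q_{\mu,r}(E)$, valid for every $q\in\R$, where $\xi=\xi(n)\in\N$: take a maximal $r$-separated subset $\{x_i\}$ of $E$ (its radius-$r$ balls cover $E$ by maximality) and colour it so that points of the same colour are more than $2r$ apart; since only boundedly many $r$-separated points can lie within $2r$ of a given one, a bounded number $\xi(n)$ of colours suffices, and each colour class is a centred packing; as $\mu(B(x_i,r))^q\ge0$, $N^q_{\mu,r}(E)\le\sum_i\mu(B(x_i,r))^q=\sum_{\text{colours}}\sum_{\text{class}}\le\xi\,M^q_{\mu,r}(E)$. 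Passing to $\liminf$/$\limsup$ gives $\mathsf L^{q,t}_\mu\le\xi\,\mathsf C^{q,t}_\mu$ and, through the two Method I constructions and the regularisation, $\mathsf H^{q,t}_\mu\le\xi\,\mathsf P^{q,t}_\mu$, which with the previous paragraph closes (2). For (3) I would remove $\xi$ following Olsen's proof that ${\mathcal H}^{q,t}_\mu\le{\mathcal P}^{q,t}_\mu$ when $q\le0$ or when $q>0$ and $\mu\in{\mathcal P}_D(\R^n)$: instead of colouring one passes from a maximal separated set to a genuine packing, at the price of enlarging balls by a bounded factor, and $\mu(B(x,2r))^q$ is then controlled by $\mu(B(x,r))^q$ — directly when $q\le0$ since $s\mapsto s^q$ is non-increasing and $\mu(B(x,r))\le\mu(B(x,2r))$, and up to the factor $P_2(\mu)^q$ for all small $r$ when $q>0$ and $\mu$ is doubling — so that $\mathsf H^{q,t}_\mu\le\mathsf P^{q,t}_\mu$.

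I expect the real obstacle to be this last comparison. The two extreme inequalities of (2) are almost tautological, since fixed-radius families are particular $\delta$-families; but the passage from coverings to packings genuinely costs the covering constant $\xi$, and one must be careful that the $\liminf$ defining $\mathsf L^{q,t}_\mu$ and the $\limsup$ defining $\mathsf C^{q,t}_\mu$ are read at matching scales and survive both Method I constructions and the supremum-over-subsets regularisation. The elimination of $\xi$ in (3) is the second delicate point: removing this constant forces converting a covering into a genuine packing of comparable but different mesh, whose effect on the $(2r)^t$ weights has to be absorbed — and this is exactly where, for $q>0$, the doubling hypothesis becomes indispensable, in complete analogy with the classical comparison of the multifractal Hausdorff and packing measures in \cite{Ol1}.
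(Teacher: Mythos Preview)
The paper does not prove this theorem here: it is quoted verbatim from \cite{NB}, so there is no argument in the present text against which to compare your outline. What you sketch --- the Method~I construction for the outer-measure property, the splitting of fixed-radius coverings and packings of positively separated sets for metric additivity, the trivial embeddings of fixed-radius families into $\delta$-families for the two outer inequalities in~(2), and a Besicovitch-type comparison $N^q_{\mu,r}\le\xi\,M^q_{\mu,r}$ for the middle one --- is the natural route and is presumably close to what \cite{NB} does.

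One point deserves care before you write~(3) out. Your plan to remove $\xi$ by a change of scale (take a maximal $2r$-separated set, then control $\mu(B(x,2r))^q$ by $\mu(B(x,r))^q$ via monotonicity when $q\le0$ or via doubling when $q>0$) produces an inequality of the form $N^q_{\mu,2r}(E)\le c(q)\,M^q_{\mu,r}(E)$; after multiplying by $(4r)^t$ and passing to the limit this gives $\mathsf L^{q,t}_\mu\le 2^{t}c(q)\,\mathsf C^{q,t}_\mu$, so a residual multiplicative constant survives in front of $\mathsf P^{q,t}_\mu$, not the constant-free inequality claimed in~(3). Whether this constant can really be dropped hinges on the precise meaning of ``centered packing'' used in the definition of $M^q_{\mu,r}$. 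If, as the definition of $M_r(E)$ in Section~\ref{sec2.2} suggests, it means centres at mutual distance $\ge r$ (rather than pairwise disjoint closed $r$-balls, i.e.\ centres more than $2r$ apart), then a maximal $r$-separated subset of $E$ is simultaneously a centred covering and a centred packing at the \emph{same} radius, yielding $N^q_{\mu,r}\le M^q_{\mu,r}$ with no constant and no doubling hypothesis at all --- which, incidentally, would also make the $\xi$ in~(2) unnecessary. You should check which convention \cite{NB} actually adopts before committing to either route.
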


The measures $\mathsf{ H}^{q,t}_{\mu}$ and $\mathsf{ P}^{q,t}_{\mu}$
and the pre-measure $C^{q,t}_{\mu}$ assign in the usual way a
multifractal dimension to each subset $E$ of $\mathbb{R}^n$. They
are respectively denoted by $\mathsf{b}_{\mu}^q(E)$,
$\mathsf{B}_{\mu}^q(E)$ and $\mathsf{\Delta}_{\mu}^q(E)$,
\begin{proposition} Let $q \in \R$, $\mu \in {\mathcal P}(\R^n)$ and $E\subseteq
\mathbb{R}^n$. Then
\par\noindent\begin{enumerate}
\item  there exists a unique number ${\mathsf  b}_{\mu}^q(E)\in[-\infty,+\infty]$ such that
 $$
\mathsf{H}^{q,t}_{\mu}(E)=\left\{\begin{matrix}  \infty &\text{if}& t < {\mathsf  b}_{\mu}^q(E),\\
 \\
 0 & \text{if}&  {\mathsf  b}_{\mu}^q(E) < t,\end{matrix}\right.
 $$
\item  there exists a unique number ${\mathsf  B}_{\mu}^q(E)\in[-\infty,+\infty]$ such that
 $$
\mathsf{P}^{q,t}_{\mu}(E)=\left\{\begin{matrix}  \infty &\text{if}& t < {\mathsf  B}_{\mu}^q(E),\\
 \\
 0 & \text{if}&  {\mathsf  B}_{\mu}^q(E) < t,\end{matrix}\right.
 $$

\item  there exists a unique number ${\mathsf  \Delta}_{\mu}^q(E)\in[-\infty,+\infty]$ such that
 $$
\mathsf{C}^{q,t}_{\mu}(E)=\left\{\begin{matrix}  \infty &\text{if}& t < {\mathsf  \Delta}_{\mu}^q(E),\\
 \\
 0 & \text{if}&  {\mathsf  \Delta }_{\mu}^q(E) < t.\end{matrix}\right.
 $$
\end{enumerate}
In addition, we have
$${\mathsf  b}_{\mu}^q(E) \le   {\mathsf  B }_{\mu}^q(E) \le  {\mathsf  \Delta}_{\mu}^q(E).$$
\end{proposition}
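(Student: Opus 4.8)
The plan is to reduce everything to a single \emph{scaling dichotomy}: for a fixed $q\in\R$ and a fixed $E$, and for each of the three set functions $\Phi^{t}\in\{\mathsf H_\mu^{q,t}(E),\mathsf P_\mu^{q,t}(E),\mathsf C_\mu^{q,t}(E)\}$ viewed as a function of $t$, one has, whenever $s<t$,
\[
\Phi^{s}<\infty\ \Longrightarrow\ \Phi^{t}=0,\qquad\text{equivalently}\qquad \Phi^{t}>0\ \Longrightarrow\ \Phi^{s}=\infty .
\]
Granting this, the existence of a unique critical value is a purely order-theoretic matter, and the inequality $\mathsf b_\mu^q(E)\le\mathsf B_\mu^q(E)\le\mathsf\Delta_\mu^q(E)$ will follow from the comparison inequalities of Theorem \ref{HHPP} together with the trivial-cover bound $\mathsf P_\mu^{q,t}(E)\le\mathsf C_\mu^{q,t}(E)$.

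First I would prove the dichotomy at the level of the pre-measures, where it is transparent. Since $\mu(B(x,r))^{q}\ge0$ for every $q$, both $N_{\mu,r}^{q}(E)$ and $M_{\mu,r}^{q}(E)$ are nonnegative, and one writes $N_{\mu,r}^{q}(E)(2r)^{t}=\big(N_{\mu,r}^{q}(E)(2r)^{s}\big)(2r)^{t-s}$. If $\mathsf L_\mu^{q,s}(E)=\liminf_{r\to0}N_{\mu,r}^{q}(E)(2r)^{s}<\infty$, there is a sequence $r_{k}\downarrow0$ along which $N_{\mu,r_{k}}^{q}(E)(2r_{k})^{s}$ stays bounded; multiplying by $(2r_{k})^{t-s}\to0$ gives $\mathsf L_\mu^{q,t}(E)=0$. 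Conversely, if $\mathsf L_\mu^{q,t}(E)>0$, then $N_{\mu,r}^{q}(E)(2r)^{t}$ is bounded below by a positive constant for all small $r$, and multiplying by $(2r)^{s-t}\to\infty$ forces $\mathsf L_\mu^{q,s}(E)=\infty$. The identical computation with $\limsup$ in place of $\liminf$ and $M_{\mu,r}^{q}$ in place of $N_{\mu,r}^{q}$ yields the dichotomy for $\mathsf C_\mu^{q,t}$; in particular part (3) is already done by setting $\mathsf\Delta_\mu^q(E):=\inf\{t\in\R:\mathsf C_\mu^{q,t}(E)=0\}$ (and checking below that this number works).

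Next I would transport the dichotomy through the outer-measure modifications. For $\mathsf P_\mu^{q,t}$: if $\mathsf P_\mu^{q,s}(E)<\infty$, pick a bounded countable cover $E\subseteq\bigcup_iE_i$ with $\sum_i\mathsf C_\mu^{q,s}(E_i)<\infty$; then each $\mathsf C_\mu^{q,s}(E_i)<\infty$, so $\mathsf C_\mu^{q,t}(E_i)=0$, hence $\mathsf P_\mu^{q,t}(E)\le\sum_i\mathsf C_\mu^{q,t}(E_i)=0$. For $\mathsf H_\mu^{q,t}$ the modification has two stages: from $\overline{\mathsf H}_\mu^{q,s}(F)<\infty$, decomposing the defining infimum over covers as above gives $\overline{\mathsf H}_\mu^{q,t}(F)=0$; and since $\mathsf H_\mu^{q,s}(E)<\infty$ means $\overline{\mathsf H}_\mu^{q,s}(F)<\infty$ for \emph{every} $F\subseteq E$, we get $\overline{\mathsf H}_\mu^{q,t}(F)=0$ for every such $F$, hence $\mathsf H_\mu^{q,t}(E)=\sup_{F\subseteq E}\overline{\mathsf H}_\mu^{q,t}(F)=0$. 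Now, with the dichotomy available for $\Phi^{t}\in\{\mathsf H_\mu^{q,t}(E),\mathsf P_\mu^{q,t}(E)\}$, put $c:=\inf\{t\in\R:\Phi^{t}=0\}\in[-\infty,+\infty]$: for $t>c$ there is $s<t$ with $\Phi^{s}=0<\infty$, so $\Phi^{t}=0$; for $t<c$ (when $c>-\infty$) choose $s\in(t,c)$, which is not in $\{t':\Phi^{t'}=0\}$, so $\Phi^{s}>0$, whence $\Phi^{t}=\infty$; the cases $c=\pm\infty$ are immediate. Uniqueness is clear, since two distinct candidates $c_{1}<c_{2}$ would force $\Phi^{t}$ to equal both $0$ and $\infty$ for $t\in(c_{1},c_{2})$. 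This produces $\mathsf b_\mu^q(E)$ and $\mathsf B_\mu^q(E)$ (and, as noted, $\mathsf\Delta_\mu^q(E)$), and the structure shows $c=\inf\{t:\Phi^{t}=0\}=\sup\{t:\Phi^{t}=\infty\}$.

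For the ordering, Theorem \ref{HHPP}(2) gives $\mathsf H_\mu^{q,t}(E)\le\xi\,\mathsf P_\mu^{q,t}(E)$, so $\{t:\mathsf P_\mu^{q,t}(E)=0\}\subseteq\{t:\mathsf H_\mu^{q,t}(E)=0\}$ and therefore $\mathsf b_\mu^q(E)\le\mathsf B_\mu^q(E)$; and since $E\subseteq\supp\mu$ is bounded, the one-set cover $\{E\}$ is admissible in the definition of $\mathsf P_\mu^{q,t}$, giving $\mathsf P_\mu^{q,t}(E)\le\mathsf C_\mu^{q,t}(E)$, hence $\{t:\mathsf C_\mu^{q,t}(E)=0\}\subseteq\{t:\mathsf P_\mu^{q,t}(E)=0\}$ and $\mathsf B_\mu^q(E)\le\mathsf\Delta_\mu^q(E)$. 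I expect the only genuinely delicate point to be the interaction of $\liminf$ and $\limsup$ with the factor $(2r)^{t-s}$ — one must extract the correct subsequence in each of the two implications, and one must keep careful track of the $\pm\infty$ boundary cases when pinning down $c$; the rest of the argument is bookkeeping.
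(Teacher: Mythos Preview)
Your proposal is correct. The paper actually states this proposition without proof, treating the existence of a critical exponent as folklore --- which it is: your scaling dichotomy via the factor $(2r)^{t-s}$ and the passage through the $\inf/\sup$ modifications is exactly the standard argument used for classical Hausdorff and packing dimensions, and the ordering follows from Theorem~\ref{HHPP} precisely as you say.

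One minor remark on the last step. For $\mathsf P_\mu^{q,t}(E)\le\mathsf C_\mu^{q,t}(E)$ you invoke ``$E\subseteq\supp\mu$ is bounded'' so that the one-set cover $\{E\}$ is admissible. The proposition as written allows $E\subseteq\R^n$, though the paper's pre-measures are only defined for $E\subseteq\supp\mu$ and $\mu$ is compactly supported, so in practice this is harmless. If you want to avoid the boundedness issue altogether, note that $\mathsf C_\mu^{q,t}$ is increasing, so $\mathsf C_\mu^{q,t}(E)=0$ forces $\mathsf C_\mu^{q,t}(E\cap B(0,i))=0$ for every $i$, and the cover $\{E\cap B(0,i)\}_i$ then gives $\mathsf P_\mu^{q,t}(E)=0$ directly; this yields $\mathsf B_\mu^q(E)\le\mathsf\Delta_\mu^q(E)$ without appealing to boundedness of $E$.
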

The number ${\mathsf  b}_{\mu}^q(E)$ is an obvious multifractal
analogue of the lower Hewitt-Stromberg dimension
$\underline{\dim}_{MB}(E)$ of $E$ whereas ${\mathsf  B}_{\mu}^q(E)$
is an obvious multifractal analogues of the upper Hewitt-Stromberg
dimension $\overline{\dim}_{MB}(E)$ of $E$. In fact, it follows
immediately from the definitions that
$$
{\mathsf
b}_{\mu}^0(E)=\underline{\dim}_{MB}(E)\quad\text{and}\quad{\mathsf
B}_{\mu}^0(E)=\overline{\dim}_{MB}(E).
$$
\begin{remark}
It follows from Theorem \ref{HHPP} that $$\dim_{\mu}^q(E) \le
{\mathsf b}_{\mu}^q(E) \le {\mathsf B}_{\mu}^q(E)
\le\Dim_{\mu}^q(E)\leq \Delta_{\mu}^q(E).$$
\end{remark}

 \bigskip \bigskip
The definition of these dimension functions makes it clear that they
are counterparts of the $\tau_\mu$-function which appears in the
{\it multifractal formalism}. This being the case, it is important
that they have the properties described by the physicists. The next
theorem shows that these functions do indeed have some of these
properties.
\begin{theorem}\label{th1}
Let $q\in\mathbb{R}$ and $E\subseteq \mathbb{R}^n$.
\begin{enumerate}
\item The functions $q \mapsto \mathsf{ H}^{q,t}_{\mu}(E)$,  $\mathsf{P}^{q,t}_{\mu}(E)$,  $ \mathsf{C}^{q,t}_{\mu}(E)$ are decreasing.
\item The functions $t \mapsto \mathsf{ H}^{q,t}_{\mu}(E)$,  $\mathsf{P}^{q,t}_{\mu}(E)$,  $ \mathsf{C}^{q,t}_{\mu}(E)$ are
decreasing.
\item The functions $q \mapsto { {\mathsf  b}}^{q}_{\mu}(E)$, $ {{\mathsf  B}}^{q}_{\mu}(E)$,  $ {{\mathsf  \Delta}}^{q}_{\mu}(E)$ are decreasing.
\item The functions $q \mapsto  {{\mathsf  B}}^{q}_{\mu}(E)$,  $ {{\mathsf  \Delta}}^{q}_{\mu}(E)$ are convex.
\end{enumerate}
\end{theorem}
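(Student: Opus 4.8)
The plan is to prove the four assertions in the order (1), (2), (3), (4), with the first three being essentially formal and (4) carrying the only genuinely non-formal point. The two elementary facts underlying (1) and (2) are: since $\mu$ is a probability measure, $0<\mu(B(x,r))\le 1$ for every $x\in\supp\mu$, so $q\mapsto\mu(B(x,r))^q$ is non-increasing; and for $r<\tfrac12$ the map $t\mapsto(2r)^t$ is non-increasing. Hence, for a fixed centered packing (resp. covering) of $E$, the sum $\sum_i\mu(B(x_i,r))^q$ is non-increasing in $q$, so $M_{\mu,r}^q(E)$ and $N_{\mu,r}^q(E)$ are non-increasing in $q$; multiplying by $(2r)^t$ and taking $\limsup_{r\to0}$ (resp. $\liminf_{r\to0}$) — only $r<\tfrac12$ matters in the limit — then shows $\mathsf{C}_\mu^{q,t}(E)$ and $\mathsf{L}_\mu^{q,t}(E)$ are non-increasing in each of $q$ and $t$. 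Finally, the passages $\mathsf{L}_\mu^{q,t}\to\overline{\mathsf{H}}_\mu^{q,t}\to\mathsf{H}_\mu^{q,t}$ and $\mathsf{C}_\mu^{q,t}\to\mathsf{P}_\mu^{q,t}$ are built from countable infima, countable sums, and suprema over subsets, each of which preserves monotonicity in a parameter; this gives (1) and (2). Part (3) follows at once from (1) and the threshold characterisation in the preceding Proposition: if $q_1<q_2$ and $t<{\mathsf b}_\mu^{q_2}(E)$ then $\mathsf{H}_\mu^{q_2,t}(E)=\infty$, hence $\mathsf{H}_\mu^{q_1,t}(E)=\infty$ by (1), forcing $t\le{\mathsf b}_\mu^{q_1}(E)$; taking the supremum over such $t$ gives ${\mathsf b}_\mu^{q_2}(E)\le{\mathsf b}_\mu^{q_1}(E)$, and identically for ${\mathsf B}_\mu^q$ and $\mathsf{\Delta}_\mu^q$.

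For (4) I would first handle $\mathsf{\Delta}_\mu^q$. One checks directly from the definition of $\mathsf{C}_\mu^{q,t}$ that
$$
\mathsf{\Delta}_\mu^q(E)=\limsup_{r\to0}\frac{\log M_{\mu,r}^q(E)}{-\log(2r)},
$$
so it suffices to prove that $q\mapsto\log M_{\mu,r}^q(E)$ is convex for each small $r$: dividing by the positive number $-\log(2r)$ preserves convexity, and $\limsup$ of convex functions is convex since $\limsup\bigl(\lambda a_r+(1-\lambda)b_r\bigr)\le\lambda\limsup a_r+(1-\lambda)\limsup b_r$. To see the convexity of $q\mapsto\log M_{\mu,r}^q(E)$, fix a centered packing $(B(x_i,r))_i$ of $E$; then $q\mapsto g(q):=\sum_i\mu(B(x_i,r))^q$ is a finite sum of log-linear functions $e^{q\log\mu(B(x_i,r))}$, hence log-convex (Cauchy–Schwarz gives $(\log g)''=(g''g-(g')^2)/g^2\ge 0$), and $\log M_{\mu,r}^q(E)=\sup\bigl\{\log\sum_i\mu(B(x_i,r))^q\bigr\}$ is a supremum of convex functions, hence convex.

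The convexity of $q\mapsto{\mathsf B}_\mu^q(E)$ I would then deduce via the ``modified dimension'' identity
$$
{\mathsf B}_\mu^q(E)=\inf\Bigl\{\sup_i\mathsf{\Delta}_\mu^q(E_i)\;\Big|\;E\subseteq\bigcup_iE_i\Bigr\},
$$
which follows straight from the definition of $\mathsf{P}_\mu^{q,t}$ and the thresholds: if $\sup_i\mathsf{\Delta}_\mu^q(E_i)<t$ then $\mathsf{C}_\mu^{q,t}(E_i)=0$ for all $i$, so $\mathsf{P}_\mu^{q,t}(E)=0$; conversely, for $t>{\mathsf B}_\mu^q(E)$ pick $t'\in({\mathsf B}_\mu^q(E),t)$, so $\mathsf{P}_\mu^{q,t'}(E)=0$ yields a cover $\{E_i\}$ with $\sum_i\mathsf{C}_\mu^{q,t'}(E_i)<\infty$, whence each $\mathsf{C}_\mu^{q,t'}(E_i)<\infty$ and $\mathsf{\Delta}_\mu^q(E_i)\le t'<t$. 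Granting this, let $q=\lambda q_1+(1-\lambda)q_2$, fix $\varepsilon>0$, choose covers $\{A_i\}$, $\{B_k\}$ of $E$ with $\sup_i\mathsf{\Delta}_\mu^{q_1}(A_i)<{\mathsf B}_\mu^{q_1}(E)+\varepsilon$ and $\sup_k\mathsf{\Delta}_\mu^{q_2}(B_k)<{\mathsf B}_\mu^{q_2}(E)+\varepsilon$, and pass to the common refinement $\{A_i\cap B_k\}$: by monotonicity of $\mathsf{\Delta}_\mu^q$ in its set argument together with the convexity just established,
$$
\mathsf{\Delta}_\mu^q(A_i\cap B_k)\le\lambda\mathsf{\Delta}_\mu^{q_1}(A_i)+(1-\lambda)\mathsf{\Delta}_\mu^{q_2}(B_k)<\lambda{\mathsf B}_\mu^{q_1}(E)+(1-\lambda){\mathsf B}_\mu^{q_2}(E)+\varepsilon,
$$
so taking the supremum over $i,k$ and then the infimum over covers gives ${\mathsf B}_\mu^q(E)\le\lambda{\mathsf B}_\mu^{q_1}(E)+(1-\lambda){\mathsf B}_\mu^{q_2}(E)+\varepsilon$; let $\varepsilon\to0$.

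I expect the main obstacle to be exactly this last step for ${\mathsf B}_\mu^q$. The tempting direct route — running the Hölder estimate $\mathsf{C}_\mu^{q,t}(F)\le\mathsf{C}_\mu^{q_1,t_1}(F)^{\lambda}\,\mathsf{C}_\mu^{q_2,t_2}(F)^{1-\lambda}$ (valid for $(q,t)=\lambda(q_1,t_1)+(1-\lambda)(q_2,t_2)$) on a common refinement of two covers — fails, because from $\sum_i\mathsf{C}_\mu^{q_1,t_1}(A_i)<\infty$ one cannot bound $\sum_i\mathsf{C}_\mu^{q_1,t_1}(A_i)^{\lambda}$ when $0<\lambda<1$; the pre-measure $\mathsf{C}_\mu^{q,t}$ is only finitely, not countably, subadditive, so refining a cover may blow up the partial sums. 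Routing through the dimension $\mathsf{\Delta}_\mu^q$, where one compares critical exponents rather than pre-measure values and the Hölder exponents therefore disappear, is what makes the common-refinement argument go through. A secondary point to watch is that all the $\limsup$'s above remain in $(-\infty,+\infty]$, so the convexity inequalities are never of the indeterminate form $\infty-\infty$.
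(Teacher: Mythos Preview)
Your proof is correct. Parts (1)--(3) match the paper's one-line arguments. For part (4) you take a genuinely different route for ${\mathsf B}_\mu^q$: the paper works at the pre-measure level, proving the H\"older-type inequality $\mathsf{C}_\mu^{\alpha p+(1-\alpha)q,\alpha t+(1-\alpha)s}(F)\le\mathsf{C}_\mu^{p,t}(F)^\alpha\,\mathsf{C}_\mu^{q,s}(F)^{1-\alpha}$ and then --- to get around precisely the obstacle you flag in your last paragraph --- truncates the common refinement to the finite family $\{H_i\cap K_j\}_{i,j\le n}$, bounds $\mathsf{P}_\mu^{\alpha p+(1-\alpha)q,\alpha t+(1-\alpha)s+\eta}$ of the union $E_n$ by $n<\infty$ via a finite H\"older inequality, and then invokes countable stability of ${\mathsf B}_\mu^q$ over the exhaustion $E\subseteq\bigcup_n E_n$. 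You instead pass to the dimension level early, using the modified-dimension identity ${\mathsf B}_\mu^q(E)=\inf\{\sup_i\mathsf{\Delta}_\mu^q(E_i)\}$ (which the paper proves separately, later, as Proposition~\ref{mod_box}) and the already-established convexity of $\mathsf{\Delta}_\mu^q$; this makes the common-refinement step go through without any truncation, since you are comparing critical exponents rather than pre-measure sums. Your route is a bit slicker and more conceptual; the paper's is more self-contained within the proof of the theorem. One minor remark: your justification of log-convexity of $q\mapsto\sum_i\mu(B(x_i,r))^q$ via $(\log g)''\ge 0$ tacitly assumes finiteness and differentiability --- it is cleaner (and covers infinite packings) to cite H\"older directly, which is what the paper does.
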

\begin{proof}Let $q\in\mathbb{R}$ and $E\subseteq \mathbb{R}^n$.\\
The first and second part of Theorem \ref{th1} follows since $x
\mapsto a^x$ is decreasing for all $a \in ]0, 1[$. \\ Observe that
part (3) of Theorem \ref{th1} follows immediately from (1).
\\ We will now prove the part (4). Let $\alpha\in[0, 1]$ and $p, s, t \in \R$. Suppose that we
have shown that
\begin{equation}\label{convex}
\mathsf{C}_\mu^{\alpha p + (1-\alpha) q, \alpha t+ (1-\alpha )s}(E)
\le \Big( \mathsf{C}_\mu^{ p, t } (E)\Big)^\alpha  \Big (
\mathsf{C}_\mu^{ q, s } (E)\Big)^{1-\alpha}.
\end{equation}
Then, for all $\epsilon >0$, we have
\begin{equation*}
\mathsf{C}_\mu^{\alpha p + (1-\alpha) q, \alpha {{\mathsf
\Delta}}^{p}_{\mu}(E)+ (1-\alpha ){{\mathsf  \Delta}}^{q}_{\mu}(E) +
\epsilon }(E) \le \left ( \mathsf{C}_\mu^{ p, {{\mathsf
\Delta}}^{p}_{\mu}(E) +\epsilon } (E)\right)^\alpha  \left
(\mathsf{C}_\mu^{ q, {{\mathsf \Delta}}^{q}_{\mu}(E) +\epsilon  }
(E)\right)^{1-\alpha} = 0.
\end{equation*}
We therefore conclude that
$${{\mathsf  \Delta}}^{\alpha p +(1-\alpha)
q}_{\mu}(E) \le \alpha {{\mathsf  \Delta}}^{p}_{\mu}(E) +(1-\alpha)
{{\mathsf \Delta}}^{q}_{\mu}(E) + \epsilon.$$ Finally, letting
$\epsilon $ tend to $0$, then the convexity of $q \mapsto {{\mathsf
\Delta}}^{q}_{\mu}(E)$ follows.

\bigskip
We now turn towards the proof of  \eqref{convex}. Put $r >0$ and
$\Big(B(x_i, r)\Big)_i$ be a centered packing of $E$. It follows
from H\"{o}lder inequality that
 \begin{eqnarray*}
 \sum_i \mu(B(x_i, r))^{\alpha p + (1-\alpha) q} &=&  \sum_i \Big(\mu(B(x_i, r)^p\Big)^\alpha \Big(\mu(B(x_i, r)^q\Big)^{1-\alpha}\\
 &\le & \left( \sum_i \mu(B(x_i, r)^p\right)^\alpha \left(\sum_i \mu(B(x_i, r)^q\right)^{1-\alpha}\\
 &\le & \Big( M_{\mu, r}^p(E) \Big)^{\alpha}\Big( M_{\mu, r}^q(E)
\Big)^{1-\alpha}.
 \end{eqnarray*}
This shows that
$$
M_{\mu, r}^{\alpha p+ (1-\alpha)q} ~(2r)^{\alpha t+ (1-\alpha )s}\le
\Big(M_{\mu, r}^{ p} ~(2r)^{ t}\Big)^{\alpha} \Big(M_{\mu, r}^{ q}
~(2r)^{ s}\Big)^{1- \alpha}.
$$
Letting $r$ tend to  $0$ we get the result.

 \bigskip
We must now show the convexity of $q \mapsto  {{\mathsf
B}}^{q}_{\mu}(E)$.  Let $\eta >0$ and put $t=  {{\mathsf
B}}^{p}_{\mu}(E)$ and  $s= {{\mathsf B}}^{q}_{\mu}(E)$. Since
$\mathsf{P}_\mu^{q, s+\eta}(E) =\mathsf{P}_\mu^{p, t+\eta} (E)=0$,
we can choose bounded coverings $(H_i)_i$ and $(K_i)_i$ of $E$ such
that
$$
\sum_i \mathsf{C}_\mu^{q, t+\eta} (H_i)\le 1 \quad \text{and }
\sum_i \mathsf{C}_\mu^{q, s+\eta} (K_i)\le 1.
$$
Next, for $ n\in  \N^*,$ let $E_n = \displaystyle\bigcup_{i, j =1}^n
( H_i \cap K_j)$, we clearly have
\begin{eqnarray*}
\mathsf{P}_\mu^{\alpha p + (1-\alpha) q, \alpha t + (1-\alpha ) s + \eta }(E_n) &\le & \sum_{i, j= 1}^n \mathsf{P}_\mu^{\alpha p + (1-\alpha) q, \alpha t + (1-\alpha ) s + \eta } (H_i \cap K_j)\\
 &\le & \sum_{i, j= 1}^n \mathsf{C}_\mu^{\alpha p + (1-\alpha) q, \alpha t + (1-\alpha ) s + \eta } (H_i \cap K_j)\\
&\overset{\eqref{convex}}{\le} & \sum_{i, j= 1}^n \Big(
\mathsf{C}_\mu^{p, t +\eta } (H_i \cap K_j) \Big)^\alpha
\Big(  \mathsf{C}_\mu^{q, s+ \eta} (H_i \cap K_j) \Big)^{1-\alpha}\\
&\overset{\text{H\"{o}lder} }{\le} & \left(   \sum_{i, j= 1}^n \mathsf{C}_\mu^{p, t + \eta  } (H_i \cap K_j) \right)^\alpha
\left(   \sum_{i, j= 1}^n \mathsf{C}_\mu^{q, s + \eta  } (H_i \cap K_j) \right)^{1-\alpha }\\
&\le  & \left(   \sum_{i, j= 1}^n \mathsf{C}_\mu^{p, t + \eta  } (H_i ) \right)^\alpha  \left(   \sum_{i, j= 1}^n \mathsf{C}_\mu^{q, s + \eta  } ( K_j) \right)^{1-\alpha }\\
&\le  & \left( n  \sum_{i= 1}^n \mathsf{C}_\mu^{p, t + \eta  } (H_i ) \right)^\alpha  \left(n   \sum_{ j= 1}^n \mathsf{C}_\mu^{q, s + \eta  } ( K_j) \right)^{1-\alpha }\\
&\le & n^\alpha \; n^{1-\alpha} = n < \infty.
\end{eqnarray*}
We now obtain, for all $n\in \N^*$, $${{\mathsf B}}_\mu^{\alpha p +
(1-\alpha) q} (E_n)\le \alpha t + (1-\alpha) s+\eta.$$ Since clearly
$E \subseteq \bigcup_n E_n$, we therefore conclude that
\begin{eqnarray*}
{{\mathsf B}}_\mu^{\alpha p + (1-\alpha) q} (E) &\le & {{\mathsf
B}}_\mu^{\alpha p + (1-\alpha) q} \left(\bigcup_n E_n\right)
\le  \sup_n  {{\mathsf B}}_\mu^{\alpha p + (1-\alpha) q} ( E_n) \\
&\le &   \alpha  {{\mathsf B}}_\mu^p (E) + (1-\alpha)  {{\mathsf
B}}_\mu^q(E)+\eta.
\end{eqnarray*}
Letting $\eta $ tend to $0$ now yields the desired result. This
completes the proof of Theorem \ref{th1}.
\end{proof}

 \bigskip \bigskip
Next we define the multifractal separator functions ${\mathsf
b}_{\mu}$, ${\mathsf B}_{\mu}$ and ${{\mathsf \Lambda}}_{\mu}$\;:
$\mathbb{R}\rightarrow [-\infty,+\infty]$ by
$$
\begin{array}{lllcr}
{\mathsf b}_{\mu}:\;q\rightarrow {\mathsf
b}_{\mu}^{q}(\supp\mu),\quad {\mathsf B}_{\mu}:\;q\rightarrow
{\mathsf B}_{\mu}^{q}(\supp\mu)\quad\text{and}\quad {{\mathsf
\Lambda}}_{\mu}:\;q\rightarrow{{\mathsf \Delta}}_{\mu}^q(\supp\mu).
\end{array}
$$
We also obtain the following corollary providing information about
the lower and upper multifractal Hewitt-Stromberg functions.
\begin{corollary}
Let $q\in \mathbb{R}$. We have
\begin{enumerate}
\item for $q < 1$,  ${{\mathsf b}}_{\mu}(q) \ge 0$.
\item For $q = 1$,  ${{\mathsf b}}_{\mu}(q) ={{\mathsf \Lambda}}_{\mu}(q)  = 0$.
\item For $q > 1$,  ${{\mathsf \Lambda}}_{\mu}(q) \le 0$.
\end{enumerate}
\end{corollary}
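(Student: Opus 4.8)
The plan is to obtain each inequality from two elementary ingredients: for the lower bounds (the ``$\ge 0$'' statements), the finite subadditivity of $\mu$ together with the relevant power inequality; for the upper bounds (the ``$\le 0$'' statements), the bounded overlap of centered packings, i.e.\ Besicovitch's covering theorem. Throughout I use that $\mu$ is a compactly supported probability measure, so $\supp\mu$ is bounded and $\mu(\supp\mu)=\mu(\R^n)=1$; that ${\mathsf L}_\mu^{q,t}(\emptyset)=0$, whence $\overline{\mathsf H}_\mu^{q,t}(F)\le{\mathsf L}_\mu^{q,t}(F)$ for every bounded $F$ (use the trivial covering $\{F\}$); and that ${\mathsf H}_\mu^{q,t}(\supp\mu)\ge\overline{\mathsf H}_\mu^{q,t}(\supp\mu)$.

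To prove (1), fix $q<1$ and $t<0$; I will show ${\mathsf H}_\mu^{q,t}(\supp\mu)=\infty$, which by the defining property of ${\mathsf b}_\mu^q(\supp\mu)$ forces ${\mathsf b}_\mu(q)={\mathsf b}_\mu^q(\supp\mu)\ge 0$. Let $(E_i)_i$ be an arbitrary bounded covering of $\supp\mu$. Since $\sum_i\mu(E_i)\ge\mu\big(\bigcup_iE_i\big)\ge 1$, some $E_{i_0}$ satisfies $\mu(E_{i_0})>0$; in particular $E_{i_0}\neq\emptyset$. For any centered covering $(B(y_j,r))_j$ of $E_{i_0}$ one has $\sum_j\mu(B(y_j,r))\ge\mu(E_{i_0})$, so: if $0<q<1$, the subadditivity of $s\mapsto s^q$ yields $\sum_j\mu(B(y_j,r))^q\ge\mu(E_{i_0})^q$; if $q\le 0$, each summand is at least $1$ (using $0^q=\infty$ and $\mu(B(y_j,r))\le 1$), so $\sum_j\mu(B(y_j,r))^q\ge 1$. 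In either case $N_{\mu,r}^q(E_{i_0})\ge c$ for some $c>0$ independent of $r$, whence ${\mathsf L}_\mu^{q,t}(E_{i_0})=\liminf_{r\to 0}N_{\mu,r}^q(E_{i_0})(2r)^t\ge c\,\liminf_{r\to 0}(2r)^t=\infty$ since $t<0$. Hence $\sum_i{\mathsf L}_\mu^{q,t}(E_i)=\infty$ for every bounded covering, so $\overline{\mathsf H}_\mu^{q,t}(\supp\mu)=\infty$ and therefore ${\mathsf H}_\mu^{q,t}(\supp\mu)=\infty$.

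To prove (3), fix $q>1$ and $t>0$; I will show ${\mathsf C}_\mu^{q,t}(\supp\mu)=0$, which gives ${\mathsf\Lambda}_\mu(q)={\mathsf\Delta}_\mu^q(\supp\mu)\le 0$. If $(B(x_i,r))_i$ is a centered packing of $\supp\mu$, then by bounded overlap there is a constant $\zeta_n$ depending only on $n$ such that each point of $\R^n$ lies in at most $\zeta_n$ of the balls $B(x_i,r)$; consequently $\sum_i\mu(B(x_i,r))\le\zeta_n\,\mu(\R^n)=\zeta_n$. Since $q\ge 1$ and $\mu(B(x_i,r))\in[0,1]$ we have $\mu(B(x_i,r))^q\le\mu(B(x_i,r))$, so $\sum_i\mu(B(x_i,r))^q\le\zeta_n$, i.e.\ $M_{\mu,r}^q(\supp\mu)\le\zeta_n$ for every $r$; hence ${\mathsf C}_\mu^{q,t}(\supp\mu)\le\zeta_n\,\limsup_{r\to 0}(2r)^t=0$. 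The same computation with $q=1$ gives ${\mathsf\Lambda}_\mu(1)\le 0$. Moreover, the argument of (1) applied verbatim with $q=1$ --- now using $\sum_j\mu(B(y_j,r))\ge\mu(E_{i_0})$ directly, with no power inequality --- shows ${\mathsf b}_\mu(1)\ge 0$.

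It remains to prove ${\mathsf b}_\mu(1)\le 0$, which will complete (2). Fix $t>0$. Given any $F\subseteq\supp\mu$, Besicovitch's covering theorem produces a centered covering $(B(y_j,r))_j$ of $F$ of overlap bounded by the dimensional constant $\zeta_n$, so $N_{\mu,r}^1(F)\le\sum_j\mu(B(y_j,r))\le\zeta_n\,\mu(\R^n)=\zeta_n$; therefore $\overline{\mathsf H}_\mu^{1,t}(F)\le{\mathsf L}_\mu^{1,t}(F)\le\zeta_n\,\limsup_{r\to 0}(2r)^t=0$, and taking the supremum over $F\subseteq\supp\mu$ gives ${\mathsf H}_\mu^{1,t}(\supp\mu)=0$, i.e.\ ${\mathsf b}_\mu(1)\le 0$. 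Thus ${\mathsf b}_\mu(1)=0$, and combining ${\mathsf b}_\mu(1)\le{\mathsf\Lambda}_\mu(1)$ (which follows from ${\mathsf b}_\mu^q(\supp\mu)\le{\mathsf\Delta}_\mu^q(\supp\mu)$) with ${\mathsf\Lambda}_\mu(1)\le 0$ yields ${\mathsf\Lambda}_\mu(1)=0$. The one point that genuinely requires care is this upper-bound direction: one must invoke Besicovitch's covering theorem (equivalently, the bounded overlap of $r$-separated families of balls) to bound $N_{\mu,r}^1$ and $M_{\mu,r}^q$ from above by a purely dimensional constant, and one must keep straight the convention $0^q=\infty$ for $q\le 0$ together with which power inequality is in force --- namely $\big(\sum_j s_j\big)^q\le\sum_j s_j^q$ for $0<q\le 1$, versus $s^q\le s$ on $[0,1]$ for $q\ge 1$.
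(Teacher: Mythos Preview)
Your proof is correct. The paper's own proof is the one-line ``follows immediately from the above theorem and definitions,'' where ``the above theorem'' is the monotonicity result (Theorem~\ref{th1}(3)). The intended route is therefore: establish part~(2) for $q=1$ from the definitions, and then read off parts~(1) and~(3) from the fact that $q\mapsto{\mathsf b}_\mu(q)$ and $q\mapsto{\mathsf\Lambda}_\mu(q)$ are decreasing. You instead prove each of the three statements directly, re-running the power inequalities $s^q\le s$ (for $q\ge 1$, $s\in[0,1]$) and $(\sum s_j)^q\le\sum s_j^q$ (for $0<q\le 1$) in each regime rather than only at $q=1$. The substantive ingredients---subadditivity of $\mu$ for the lower bounds and Besicovitch bounded overlap for the upper bounds---are exactly what is needed to justify the $q=1$ case anyway, so the two approaches differ only in packaging: yours is self-contained but a bit longer, while the paper's is shorter but leans on Theorem~\ref{th1} and leaves the $q=1$ computation implicit. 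One cosmetic point: in your bound ${\mathsf L}_\mu^{1,t}(F)\le\zeta_n\limsup_{r\to0}(2r)^t$, the pre-measure ${\mathsf L}$ is defined via a $\liminf$, so strictly speaking you should write $\liminf$; since both limits vanish for $t>0$ this is harmless.
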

\begin{proof}
This follow immediately from the above theorem and definitions.
\end{proof}

\subsection{Some characterizations of $\mathsf b_\mu(q)$ and $\mathsf
B_\mu(q)$}\label{sec2.5}

In this section,  we  investigate the relation between the lower and
upper multifractal Hewitt-Stromberg functions $\mathsf{b}_\mu$ and
$\mathsf{B}_\mu$ and the multifractal box dimension, the
multifractal packing dimension and the multifractal pre-packing
dimension.  We first note that there exists a unique number
${\Theta}_{\mu}^q(E)\in[-\infty,+\infty]$ such that
 $$
{\mathsf L}^{q,t}_{\mu}(E)=\left\{\begin{matrix}  \infty &\text{if}& t < {\Theta}_{\mu}^q(E),\\
 \\
 0 & \text{if}&  {\Theta}_{\mu}^q(E) < t.\end{matrix}\right.
 $$

\begin{proposition}\label{new_box}
Let $q\in\mathbb{R}$ and $\mu$ be a compact supported Borel
probability measure on $\mathbb{R}^n$. Then for every $E \subseteq
\supp \mu$ we have
$$
{\Theta}_{\mu}^q(E) = \liminf_{r\to 0} \frac{\log N_{\mu, r}^q (E) }
{-\log r}\qquad \text{and }\qquad {\mathsf \Delta}_{\mu}^q(E) =
\limsup_{r\to 0} \frac{\log M_{\mu, r}^q (E) } {-\log r}.
$$
\end{proposition}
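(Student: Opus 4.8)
The plan is to deduce both identities from one elementary fact about critical exponents: if $(c_r)_{r>0}$ is a family of positive reals, then the value of $t$ at which the map $t\mapsto\liminf_{r\to0}c_r(2r)^t$ jumps from $+\infty$ to $0$ equals $\liminf_{r\to0}\frac{\log c_r}{-\log r}$, and likewise with $\limsup$ in both places. Applying this with $c_r=N_{\mu,r}^q(E)$ yields the formula for $\Theta_\mu^q(E)$, and with $c_r=M_{\mu,r}^q(E)$ the formula for $\mathsf{\Delta}_\mu^q(E)$. If $E=\emptyset$ both sides equal $-\infty$ by the standing conventions, so assume $E\neq\emptyset$; then $N_{\mu,r}^q(E)$ and $M_{\mu,r}^q(E)$ are strictly positive, $N_{\mu,r}^q(E)$ is finite (a bounded subset of $\R^n$ has a finite $r$-covering by balls centred in it, and each $\mu(B(x_i,r))^q$ is finite), and the estimates below remain correct verbatim even at those scales $r$ for which $M_{\mu,r}^q(E)=+\infty$ (which can happen only when $q<0$).

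For the first identity, set $a=\liminf_{r\to0}\frac{\log N_{\mu,r}^q(E)}{-\log r}$. To get $\Theta_\mu^q(E)\le a$, fix $t>a$ and choose $t'$ with $a<t'<t$; by definition of the liminf there is a sequence $r_k\downarrow0$, with $r_k<1$, along which $\frac{\log N_{\mu,r_k}^q(E)}{-\log r_k}<t'$, i.e. $N_{\mu,r_k}^q(E)<r_k^{-t'}$, hence $N_{\mu,r_k}^q(E)(2r_k)^t<2^t r_k^{\,t-t'}\to0$. Thus $\mathsf{L}_\mu^{q,t}(E)=\liminf_{r\to0}N_{\mu,r}^q(E)(2r)^t=0$, and the dichotomy defining $\Theta_\mu^q(E)$ forces $\Theta_\mu^q(E)\le t$; letting $t\downarrow a$ gives $\Theta_\mu^q(E)\le a$. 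For the reverse inequality, fix $t<a$ and choose $t'$ with $t<t'<a$; then $\frac{\log N_{\mu,r}^q(E)}{-\log r}>t'$ for every sufficiently small $r$, so $N_{\mu,r}^q(E)>r^{-t'}$ and therefore $N_{\mu,r}^q(E)(2r)^t>2^t r^{\,t-t'}\to+\infty$. Hence $\mathsf{L}_\mu^{q,t}(E)=+\infty$, which forces $\Theta_\mu^q(E)\ge t$; letting $t\uparrow a$ gives $\Theta_\mu^q(E)\ge a$. The two bounds together give $\Theta_\mu^q(E)=a$.

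The second identity follows by the same argument with $\limsup$ replacing $\liminf$ and $M_{\mu,r}^q(E)$, $\mathsf{C}_\mu^{q,t}(E)$, $\mathsf{\Delta}_\mu^q(E)$ replacing $N_{\mu,r}^q(E)$, $\mathsf{L}_\mu^{q,t}(E)$, $\Theta_\mu^q(E)$: writing $b=\limsup_{r\to0}\frac{\log M_{\mu,r}^q(E)}{-\log r}$, for $t>b$ one has $M_{\mu,r}^q(E)<r^{-t'}$ for all small $r$ (where $b<t'<t$), whence $\mathsf{C}_\mu^{q,t}(E)=\limsup_{r\to0}M_{\mu,r}^q(E)(2r)^t=0$ and so $\mathsf{\Delta}_\mu^q(E)\le b$; and for $t<b$ one has $M_{\mu,r_k}^q(E)>r_k^{-t'}$ along a sequence $r_k\downarrow0$ (where $t<t'<b$), whence $\mathsf{C}_\mu^{q,t}(E)=+\infty$ and so $\mathsf{\Delta}_\mu^q(E)\ge b$. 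I do not anticipate any real obstacle: the only points needing attention are to restrict to $r<1$ so that $-\log r>0$ (harmless since $r\to0$), to keep straight which inequality holds along a subsequence and which holds eventually, and to observe that the boundary values $a,b\in\{0,+\infty\}$ (and $-\infty$) are swept up by the very same chains of inequalities. All the substance is the translation between the multiplicative gauge $(2r)^t$ and the logarithmic ratio $\log(\cdot)/(-\log r)$.
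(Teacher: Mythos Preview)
Your proof is correct and follows essentially the same approach as the paper's: both arguments establish the equality by comparing the critical exponent of $t\mapsto\mathsf{L}_\mu^{q,t}(E)$ (resp.\ $\mathsf{C}_\mu^{q,t}(E)$) with the logarithmic ratio via the elementary estimate $N_{\mu,r}^q(E)(2r)^t\gtrless 2^t r^{\,t-t'}$ once $N_{\mu,r}^q(E)\gtrless r^{-t'}$. The paper phrases each inequality as a proof by contradiction while you argue directly, and you are somewhat more explicit about which bound holds along a subsequence versus eventually, but the content is identical.
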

\begin{proof}
We will prove the first equality, the second one is similar. Suppose
that $$ \displaystyle\liminf_{r\to 0} \frac{\log N_{\mu, r}^q (E) }
{-\log r} > {\Theta}_{\mu}^q(E)  +\epsilon$$ for some $\epsilon >0$.
Then we can find $\delta >0$ such that for any $r\le \delta$,
$$
N_{\mu, r}^q (E)~ r^{{  \Theta}_{\mu}^q(E)  +\epsilon} >1$$ and then
$$
\mathsf{L}_{\mu}^{q, {\Theta}_{\mu}^q(E)
+\epsilon} \ge 2^{{\Theta}_{\mu}^q(E)  +\epsilon}
$$
which is a contradiction. We therefore infer $$
\displaystyle\liminf_{r\to 0} \frac{\log N_{\mu, r}^q (E) } {-\log
r} \le {\Theta}_{\mu}^q(E) +\epsilon\;\;\text{ for any}\;\; \epsilon
>0.$$ The proof of the following statement  $$
\displaystyle\liminf_{r\to 0} \frac{\log N_{\mu, r}^q (E) } {-\log
r} \ge {\Theta}_{\mu}^q(E) - \epsilon\;\;\text{ for any}\;\;\epsilon
>0$$
is identical to the proof of the above statement and is therefore
omitted.
\end{proof}
\begin{remark} Here we follow the approach of Olsen in \cite{Ol1,
O4, Olll, Olll1}.
\begin{enumerate}
\item The multifractal dimensions ${\Theta}_{\mu}^q(E)$ and ${\mathsf
\Delta}_{\mu}^q(E)$ of $E$ represent the upper and lower
multifractal box-dimension. In particular, we have
$$
{\Theta}_{\mu}^0(E)=\underline{\dim}_{B}(E)\quad\text{and}\quad
{\mathsf \Delta}_{\mu}^0(E)=\overline{\dim}_{B}(E).
$$
\item Let us introduce the multifrcatal generalization of the
$q$-dimensions called also relative R\'{e}nyi $q$-dimensions based
on integral representations. Let $\mu$ be a probability measure on
$\mathbb{R}^n$. For $q\in\mathbb{R}\setminus\{0\}$, we write
\begin{equation*}
    \underline{D}_{\mu}^q =
    \displaystyle\liminf_{r\rightarrow0}\frac{1}{q\log r}
    \log\int \mu(B(x,r))^q d\mu(x),
\end{equation*}

and
\begin{equation*} \overline{D}_{\mu}^q =
    \displaystyle\limsup_{r\rightarrow0}\frac{1}{q\log r}
    \log\int \mu(B(x,r))^q d\mu(x).
\end{equation*}
Now we define the generalized entropies due to R\'{e}nyi by,
 $$
h^q_r(\mu)=\frac{1}{q-1} \log
M_{\mu,r}^q(\supp\mu)\quad\text{for}\quad q\neq 1
 $$
and
 $$
h^1_r(\mu)=\inf\left\{-\sum_i\mu(E_i)\log\mu(E_i)\;\Big|\; (E_i)_i
\quad\text{is a partition of}\quad \supp\mu\right\}.
 $$
We define the upper and lower R\'{e}nyi $q$-dimensions
$\overline{T}_{\mu}^q$ and $\underline{T}_{\mu}^q$ of $\mu$ by
$$ \overline{T}_{\mu}^q=\displaystyle\limsup_{r\to0}
\frac{\log h^q_r(\mu)}{ \log r} \quad \text{and}\quad
\underline{T}_{\mu}^q=\displaystyle\liminf_{r\to0} \frac{\log
h^q_r(\mu)}{\log r }.$$ If
$\underline{D}_{\mu}^q=\overline{D}_{\mu}^q$ (respectively
$\underline{T}_{\mu}^q=\overline{T}_{\mu}^q$) we refer to the common
value as the relative R\'{e}nyi $q$-dimension of $\mu$ and denote it
${D}_{\mu}^q$ (respectively ${T}_{\mu}^q$). Finally define $
\underline{\mathcal{D}}_{\mu}(q)$,
$\overline{\mathcal{D}}_{\mu}(q)$,
$\underline{\mathcal{T}}_{\mu}(q)$  and
$\overline{\mathcal{T}}_{\mu}(q)$ : $\mathbb{R}\rightarrow [-\infty,
+\infty]$ by
$$
\underline{\mathcal{D}}_{\mu}(q)=(1-q)\underline{{D}}_{\mu}^{q-1},\quad
\overline{\mathcal{D}}_{\mu}(q)=(1-q)\overline{{D}}_{\mu}^{q-1}
$$
and
$$
\underline{\mathcal{T}}_{\mu}(q)=(1-q)\underline{{T}}_{\mu}^{q},\quad
\overline{\mathcal{T}}_{\mu}(q)=(1-q)\overline{{T}}_{\mu}^{q}.
$$
Let $q\in\mathbb{R}$ and $\mu \in {\mathcal P}_D(\mathbb{R}^n)$.
Then the following holds
$$
{\mathsf \Delta}_{\mu}(q)=\underline{\mathcal{D}}_{\mu}(q) \vee
\overline{\mathcal{D}}_{\mu}(q)= \underline{\mathcal{T}}_{\mu}(q)
\vee \overline{\mathcal{T}}_{\mu}(q).
$$
\item We define the multifractal Minkowski volume as follows. Let $E$ be a subset of $\mathbb{R}^n$ and $r > 0$.
We denote by $B(E, r)$ the open $r$ neighbourhood of $E$, i.e.
$$
B(E, r)=\Big\{ x\in \mathbb{R}^n \;\Big|\; \text{dist}(x,E)<r
\Big\}.
$$
For a real number $q$ and a Borel measure $\mu$ on $\mathbb{R}^n$,
we define the multifractal  Minkowski volume $V_{\mu,r}^q (E)$ of
$E$ with respect to the measure $\mu$ by
$$
V_{\mu,r}^q (E)= \frac{1}{r^n} \int_{B(E, r)} \mu(B(x,r))^q d
\mathcal{L}^n(x).
$$
Here  $\mathcal{L}^n$ denotes the $n$-dimensional Lebesgue measure
in $\mathbb{R}^n$. The importance of the R\'{e}nyi dimensions in
multifractal analysis together with the formal resemblance between
the multifractal Minkowski volume $V_{\mu,r}^q (E)$ and the moments
$\int_E \mu(B(x,r))^{q-1} d\mu(x) $ used in the definition the
R\'{e}nyi dimensions may be seen as a justification for calling the
quantity $V_{\mu,r}^q (E)$ for the multifractal Minkowski volume.
Using the multifractal Minkowski volume we can define multifractal
Monkowski dimensions. For a real number $q$ and a Borel measure
$\mu$ on $\mathbb{R}^n$, we define the lower and upper multifractal
Minkowski dimension of $E$, by
$$
\underline{\dim}_{M,\mu}^q(E)=\liminf_{r\to0}\frac{\log V_{\mu,r}^q
(E)}{-\log r} \quad\text{and}\quad
\overline{\dim}_{M,\mu}^q(E)=\limsup_{r\to0}\frac{\log V_{\mu,r}^q
(E)}{-\log r}.
$$
We note the close similarity between the multifractal Minkowski
dimensions and ${\mathsf \Delta}_{\mu}^q$.  Indeed, the equality
\eqref{nbnb} shows that this similarity is not merely a formal
resemblance. In fact, for $q \geq 1$, the multifractal Minkowski
dimensions and ${\mathsf \Delta}_{\mu}^q$ coincide, i.e. for
$q\geq1$ and $\mu \in {\mathcal P}_D(\mathbb{R}^n)$, we have
\begin{eqnarray}\label{nbnb}
{\mathsf \Delta}_{\mu}(q)=\underline{\dim}_{M,\mu}^{q-1}(\supp\mu)
\vee \overline{\dim}_{M,\mu}^{q-1}(\supp\mu).
\end{eqnarray}
\end{enumerate}
\end{remark}
\begin{proposition}\label{mod_box}
Let $q\in\mathbb{R}$ and $\mu$ be a compact supported Borel
probability measure on $\mathbb{R}^n$. Then for every $E \subseteq
\supp \mu$ we have
$$
{\mathsf  b}_{\mu}^q(E) =\sup_{F\subseteq E} \left\{ \inf \left\{
\sup_{i} {\Theta}_{\mu}^q(F_i) \;\Big|\;\displaystyle F\subseteq
\bigcup_i F_i, \;\; F_i \;\; \text{are bounded in } \;\;
\R^n\right\} \right\}
$$
and
$$
{\mathsf  B}_{\mu}^q(E) = \inf \left\{ \sup_{i}  {\mathsf
\Delta}_{\mu}^q (E_i) \;\Big|\;\displaystyle E\subseteq \bigcup_i
E_i, \;\; E_i \;\; \text{are bounded in } \;\; \R^n\right\}.
$$

\end{proposition}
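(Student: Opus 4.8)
The plan is to identify, in each of the two cases, the right-hand side as the critical exponent of the corresponding set function, using that every set function at play jumps sharply from $+\infty$ to $0$ as its parameter $t$ crosses a single value. Concretely, by the definitions recalled above, ${\mathsf C}_\mu^{q,t}(A)$ equals $\infty$ for $t<{\mathsf \Delta}_\mu^q(A)$ and $0$ for $t>{\mathsf \Delta}_\mu^q(A)$, and likewise ${\mathsf L}_\mu^{q,t}(A)$ equals $\infty$ for $t<{\Theta}_\mu^q(A)$ and $0$ for $t>{\Theta}_\mu^q(A)$; moreover ${\mathsf B}_\mu^q(E)$ and ${\mathsf b}_\mu^q(E)$ are, by definition, the unique critical exponents of $t\mapsto\mathsf{P}_\mu^{q,t}(E)$ and $t\mapsto\mathsf{H}_\mu^{q,t}(E)$. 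Hence it suffices to show that, with $\beta$ denoting the right-hand side of the formula in question, $\mathsf{P}_\mu^{q,t}(E)$ (resp. $\mathsf{H}_\mu^{q,t}(E)$) equals $\infty$ when $t<\beta$ and $0$ when $t>\beta$. I shall also use freely that $E\subseteq\supp\mu$, and every $F\subseteq E$, is bounded, so that singletons are admissible covers, and that a value lying strictly above an infimum is realized by an actual cover while a value lying strictly below a supremum is realized by an actual subset.

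I would treat the identity for ${\mathsf B}_\mu^q(E)$ first, as it involves only one layer of construction. Write $\gamma(E)=\inf\{\sup_i{\mathsf \Delta}_\mu^q(E_i)\,:\,E\subseteq\bigcup_i E_i,\ E_i\text{ bounded}\}$. If $t>\gamma(E)$, pick a bounded cover $(E_i)_i$ of $E$ with $\sup_i{\mathsf \Delta}_\mu^q(E_i)<t$; then ${\mathsf C}_\mu^{q,t}(E_i)=0$ for every $i$, so $\mathsf{P}_\mu^{q,t}(E)\le\sum_i{\mathsf C}_\mu^{q,t}(E_i)=0$. If $t<\gamma(E)$, then every bounded cover $(E_i)_i$ of $E$ has $\sup_i{\mathsf \Delta}_\mu^q(E_i)>t$, hence ${\mathsf \Delta}_\mu^q(E_{i_0})>t$ for some $i_0$, so ${\mathsf C}_\mu^{q,t}(E_{i_0})=\infty$ and $\sum_i{\mathsf C}_\mu^{q,t}(E_i)=\infty$; taking the infimum over covers yields $\mathsf{P}_\mu^{q,t}(E)=\infty$. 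This gives ${\mathsf B}_\mu^q(E)=\gamma(E)$, as claimed.

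For ${\mathsf b}_\mu^q(E)$ I would run the same dichotomy at two nested levels. Fix $F\subseteq E$ and set $\rho(F)=\inf\{\sup_i{\Theta}_\mu^q(F_i)\,:\,F\subseteq\bigcup_i F_i,\ F_i\text{ bounded}\}$. The argument of the previous paragraph, with ${\mathsf L}_\mu^{q,t}$, ${\Theta}_\mu^q$ and $\overline{\mathsf H}_\mu^{q,t}(F)=\inf\{\sum_i{\mathsf L}_\mu^{q,t}(F_i):F\subseteq\bigcup_i F_i,\ F_i\text{ bounded}\}$ replacing ${\mathsf C}_\mu^{q,t}$, ${\mathsf \Delta}_\mu^q$ and $\mathsf{P}_\mu^{q,t}$, shows that $\overline{\mathsf H}_\mu^{q,t}(F)=0$ for $t>\rho(F)$ and $\overline{\mathsf H}_\mu^{q,t}(F)=\infty$ for $t<\rho(F)$. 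Since $\mathsf{H}_\mu^{q,t}(E)=\sup_{F\subseteq E}\overline{\mathsf H}_\mu^{q,t}(F)$, for $t>\sup_{F\subseteq E}\rho(F)$ every term vanishes and so $\mathsf{H}_\mu^{q,t}(E)=0$, whereas for $t<\sup_{F\subseteq E}\rho(F)$ there is $F_0\subseteq E$ with $\rho(F_0)>t$, so $\mathsf{H}_\mu^{q,t}(E)\ge\overline{\mathsf H}_\mu^{q,t}(F_0)=\infty$. Hence ${\mathsf b}_\mu^q(E)=\sup_{F\subseteq E}\rho(F)$, which is precisely the asserted expression.

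The whole argument is elementary once the definitions are unwound; I expect the only mildly delicate point to be keeping the two nested operations defining ${\mathsf b}_\mu^q$ straight — the inner infimum over bounded covers of $F$, which is shown to be the critical exponent of $t\mapsto\overline{\mathsf H}_\mu^{q,t}(F)$, and the outer supremum over subsets $F\subseteq E$ through which $\mathsf{H}_\mu^{q,t}(E)$ is assembled from these pre-measures — together with the routine but essential observation that strict inequalities against the infima and suprema involved are realized by concrete covers and subsets, which is what legitimizes the vanishing and divergence conclusions.
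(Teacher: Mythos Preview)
Your argument is correct and follows essentially the same approach as the paper's proof: both rely on the $0$--$\infty$ dichotomy of the pre-measures $\mathsf{L}_\mu^{q,t}$ and $\mathsf{C}_\mu^{q,t}$ at their critical exponents $\Theta_\mu^q$ and $\mathsf{\Delta}_\mu^q$, and then pass through the infimum over covers and (for $\mathsf{b}_\mu^q$) the supremum over subsets. The only cosmetic difference is that the paper structures the argument as two contradictions (assuming $\beta<\mathsf{b}_\mu^q(E)$ and then $\mathsf{b}_\mu^q(E)<\beta$), whereas you directly verify that the right-hand side is the critical exponent of $t\mapsto\mathsf{P}_\mu^{q,t}(E)$, respectively $t\mapsto\mathsf{H}_\mu^{q,t}(E)$; the underlying steps are identical.
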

\begin{proof}
Denote
$$\beta = \sup_{F\subset E} \left\{ \inf \left\{ \sup_{i}
{\Theta}_{\mu}^q(F_i)\;\Big|\; F\subseteq \bigcup_i F_i, \;\; F_i
\;\; \text{are bounded in } \;\; \R^n\right\}  \right\}.$$ Assume
that $\beta < {\mathsf  b}_{\mu}^q(E)$ and take $\alpha \in (\beta,
{\mathsf b}_{\mu}^q(E) )$. Then,  for all $F \subseteq E$, there
exists $\{F_i\}$ of bounded subset of $F$
 such that  $F\subseteq \cup_i F_i$, and $\sup_i {\Theta}_{\mu}^q(F_i) < \alpha$.
Now observe that $\mathsf{L}_{\mu}^{q, \alpha}(F_i) = 0$ which
implies that ${\overline{\mathsf H}}_{\mu}^{q, \alpha}(F) = 0$.
 This implies that ${\mathsf H}_{\mu}^{q, \alpha}(E) = 0$. It is a contradiction. Now suppose that
$ {\mathsf  b}_{\mu}^q(E) < \beta$, then, for $\alpha \in ( {\mathsf
b}_{\mu}^q(E), \beta ),$ we have ${\mathsf H}_{\mu}^{q, \alpha}(E) =
0$. It follows from this that  ${\overline {\mathsf H}}_{\mu}^{q,
\alpha}(F) = 0$ for all $F\subseteq E$. Thus, there exists $\{F_i\}$
of bounded subset of $F$ such that  $F\subseteq \cup_i F_i$, and
$\sup_i {\mathsf L}_{\mu}^{q, \alpha} (F_i) < \infty$. We conclude
that, $\sup_i {\Theta}_{\mu}^{q} (F_i) \le \alpha$. It is also a
contradiction.

The proof of the second statement is identical to the proof of the
statement in the first part and is therefore omitted.
\end{proof}

\begin{proposition}\label{ourBB}
If $q\in\mathbb{R}$ and $\mu \in {\mathcal P}_D(\mathbb{R}^n)$, then
for any subset $E $ of $\supp \mu$, we have
$${\mathsf  B}_{\mu}^q(E) = { B}_{\mu}^q(E). $$
\end{proposition}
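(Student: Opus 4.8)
The plan is to prove the two inequalities $\mathsf{B}_\mu^q(E)\le B_\mu^q(E)$ and $B_\mu^q(E)\le\mathsf{B}_\mu^q(E)$ separately. The first is immediate from Theorem \ref{HHPP}: since $\mu\in\mathcal{P}_D(\mathbb{R}^n)$ we have $\mathsf{P}_\mu^{q,t}(E)\le{\mathcal P}^{q,t}_{\mu}(E)$ for every $t\in\mathbb{R}$, and comparing the critical exponents of these two measures yields $\mathsf{B}_\mu^q(E)\le B_\mu^q(E)$.

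The reverse inequality is the substance of the proof, and it is reduced to the following local comparison, valid for $\mu\in\mathcal{P}_D(\mathbb{R}^n)$ and every bounded $F\subseteq\supp\mu$:
\[
\Delta_\mu^q(F)\ \le\ {\mathsf \Delta}_\mu^q(F).
\]
(The opposite inequality ${\mathsf \Delta}_\mu^q(F)\le\Delta_\mu^q(F)$ holds for free, since any centered packing with all radii equal to $r$ is in particular a centered $\delta$-packing for every $\delta\ge r$, so that ${\mathsf C}_\mu^{q,t}(F)\le\overline{{\mathcal P}}^{q,t}_{\mu}(F)$; we shall not need it.) Granting the local comparison, fix $s>\mathsf{B}_\mu^q(E)$ and choose $t$ with $\mathsf{B}_\mu^q(E)<t<s$. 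Then $\mathsf{P}_\mu^{q,t}(E)=0$, so there is a cover $E\subseteq\bigcup_i E_i$ by bounded sets with $\sum_i{\mathsf C}_\mu^{q,t}(E_i)<\infty$; replacing each $E_i$ by $E_i\cap\supp\mu$ (which only decreases all the set functions below and still covers $E$, as $E\subseteq\supp\mu$) we may assume $E_i\subseteq\supp\mu$, and then ${\mathsf C}_\mu^{q,t}(E_i)<\infty$ forces ${\mathsf \Delta}_\mu^q(E_i)\le t$. By the local comparison, $\Delta_\mu^q(E_i)\le t<s$, i.e.\ $\overline{{\mathcal P}}^{q,s}_{\mu}(E_i)=0$, whence $\mathcal{P}^{q,s}_{\mu}(E)\le\sum_i\overline{{\mathcal P}}^{q,s}_{\mu}(E_i)=0$ and therefore $B_\mu^q(E)\le s$. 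Letting $s\downarrow\mathsf{B}_\mu^q(E)$ gives $B_\mu^q(E)\le\mathsf{B}_\mu^q(E)$, completing the proof modulo the local comparison.

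Finally, the local comparison is where the doubling hypothesis enters, and I expect this dyadic regrouping argument to be the main obstacle. Assume $s>{\mathsf \Delta}_\mu^q(F)$, so that $M_{\mu,r}^q(F)\le(2r)^{-s}$ for all sufficiently small $r$, fix $s'>s$, and let $(B(x_i,r_i))_i$ be a centered $\delta$-packing of $F$ with $\delta$ small enough that both the preceding estimate and the doubling estimate below apply at all scales $\le\delta$. Sort its balls into dyadic scales $I_k=\{i:\ 2^{-k-1}<r_i\le 2^{-k}\}$. For $i,j\in I_k$ with $i\ne j$ one has $d(x_i,x_j)\ge\tfrac{r_i+r_j}{2}>2^{-k-1}$, so $(B(x_i,2^{-k-1}))_{i\in I_k}$ is a centered packing of $F$ at the fixed radius $2^{-k-1}$, giving $\sum_{i\in I_k}\mu(B(x_i,2^{-k-1}))^q\le M_{\mu,2^{-k-1}}^q(F)$. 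Moreover, since $2^{-k-1}<r_i\le 2\cdot 2^{-k-1}$, the doubling condition yields $\mu(B(x_i,r_i))^q\le \kappa\,\mu(B(x_i,2^{-k-1}))^q$ with $\kappa$ independent of $i,k$ (for $q\le 0$ this is automatic, with $\kappa=1$, as the smaller ball has smaller measure), and $(2r_i)^{s'}$ is comparable to $(2\cdot 2^{-k-1})^{s'}$ up to a constant depending only on $s'$. Combining these,
\[
\sum_{i\in I_k}\mu(B(x_i,r_i))^q(2r_i)^{s'}\ \le\ C\,M_{\mu,2^{-k-1}}^q(F)\,(2\cdot 2^{-k-1})^{s'}\ \le\ C\,(2^{-k})^{s'-s},
\]
and summing this convergent geometric series over the admissible range of $k$ (namely $2^{-k}<2\delta$) bounds $\sum_i\mu(B(x_i,r_i))^q(2r_i)^{s'}$ by a quantity of order $\delta^{s'-s}$, uniformly over all centered $\delta$-packings of $F$. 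Hence the supremum over such packings tends to $0$ as $\delta\to 0$, so $\overline{{\mathcal P}}^{q,s'}_{\mu}(F)=0$ and $\Delta_\mu^q(F)\le s'$; letting $s'\downarrow s$ and then $s\downarrow{\mathsf \Delta}_\mu^q(F)$ gives $\Delta_\mu^q(F)\le{\mathsf \Delta}_\mu^q(F)$. The crux is precisely this regrouping, which converts a variable-radius packing sum into a controlled sum of fixed-radius packing sums at the cost of only a multiplicative constant supplied by the doubling property.
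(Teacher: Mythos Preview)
Your argument is correct and follows essentially the same route as the paper's: both reduce the equality $\mathsf{B}_\mu^q(E)=B_\mu^q(E)$ to the local comparison $\Delta_\mu^q(F)=\mathsf{\Delta}_\mu^q(F)$ for bounded $F\subseteq\supp\mu$, and then pass to the modified (outer) dimensions via covers. The paper carries this out by combining Proposition~\ref{mod_box} with Olsen's analogous characterization of $\Dim_\mu^q$ (Propositions~2.19 and~2.22 in \cite{Ol1}) and invokes Lemma~4.1 of \cite{O1} for the doubling step; you instead give a self-contained proof of the local comparison via the dyadic regrouping of a variable-radius packing into fixed-radius packings at scales $2^{-k-1}$, with the doubling hypothesis supplying the constant $\kappa$ needed to compare $\mu(B(x_i,r_i))^q$ with $\mu(B(x_i,2^{-k-1}))^q$. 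The two arguments are equivalent in spirit; yours is simply more explicit, while the paper's is terser but relies on external references for the key estimate.
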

\begin{proof}
 This follows easily from Propositions \ref{new_box} and \ref{mod_box},
Propositions 2.19 and 2.22 in \cite{Ol1} and Lemma 4.1 in \cite{O1}.
\end{proof}

\begin{remark}
The results developed by Falconer in \cite{FKJ} are obtained as a
special case of the multifractal results by setting $q=0$.
\end{remark}

                                                                                     \section{A multifractal formalism for Hewitt-Stromberg
                                                                                     measures}\label{sec3}

Multifractal analysis was proved to be a very useful technique in
the analysis of measures, both in theory and applications. The
upper and lower local dimensions of a measure $\mu$ on
$\mathbb{R}^n$ at a point $x$ are respectively given by :
$$
 \overline{\alpha}_\mu(x) = \limsup_{r\to 0}\frac{\log \mu(B(x,r))}{\log r} \quad \text{and} \quad \underline{\alpha}_\mu(x) = \liminf_{r\to 0}\frac{\log \mu(B(x,r))}{\log r},
$$
where $B(x,r)$ denote the closed ball of center $x$ and radius $r$. We refer to the common value as the local dimension of $\mu$ at
$x$, and denote it by ${\alpha}_{\mu}(x)$.\\

The level set of the local dimension of $\mu $  contains a crucial
information on the geometrical properties of $\mu$. The aim of the
multifractal analysis of a measure  is to relate the Hausdorff and
packing dimensions of these levels sets to the Legendre  transform
of some concave (convex) function (see for example \cite{Naj, BBJ,
BJ, BB, BBH, DB, TO1, Ol1}). For $\alpha \ge 0$, we define the
fractal sets,
$$
{\overline E}^\alpha =  \Big\{ x \in \supp \mu
\;\;\big|\;\;\overline{\alpha}_{\mu}(x)   \le \alpha \Big\}; \quad
\overline{E}_\alpha =  \Big\{ x \in \supp \mu
\;\;\big|\;\;\overline{\alpha}_{\mu }(x)   \ge \alpha \Big\}
$$
and
$$
{\underline E}^\alpha =  \Big\{ x \in \supp \mu
\;\;\big|\;\;\underline{\alpha}_{\mu}(x)   \le \alpha \Big\}; \quad
\underline{E}_\alpha =  \Big\{ x \in \supp \mu
\;\;\big|\;\;\underline{\alpha}_{\mu }(x)   \ge \alpha \Big\}.
$$
Also, let
$$
E(\alpha) = {\overline E}^\alpha \cap {\underline E}_\alpha = \Big\{
x \in \supp \mu \;\;\big|\;\;\alpha_{\mu}(x)   =  \alpha \Big\}.
$$

 \bigskip \bigskip
Theorem \ref{dDmeasure} allows us to consider the relationship
between the lower and upper multifractal Hewitt-Stromberg functions
$\mathsf{b}_\mu$ and $\mathsf{B}_\mu$ and the multifractal spectra.
We start by giving an upper bound theorem. For $\mu \in {\mathcal
P}(\R^n)$, set
$$
\alpha_{min} = \sup_{0<q} -\frac{{{\mathsf b}_\mu(q)}}{q}, \quad
\alpha_{max} = \inf_{0>q} -\frac{{{\mathsf b}_\mu(q)}}{q},\quad
\beta_{min} = \sup_{0<q} -\frac{{{\mathsf B}_\mu(q)}}{q}, \quad
\beta_{max} = \inf_{0>q} -\frac{{{\mathsf B}_\mu(q)}}{q}.
$$
Before stating this formally, we remind the reader that if $\varphi
: \mathbb{R} \rightarrow  \mathbb{R}$ is a real valued function,
then the Legendre transform $\varphi^* : \mathbb{R} \rightarrow
[-\infty, +\infty]$ of $\varphi$ is defined by
$$
\varphi^*(x)=\inf_y\Big(xy+ \varphi(y)\Big).
$$

Now, we can state our {\it multifractal formalism}.
\begin{theorem}\label{dDmeasure}
Let $\alpha \ge 0$, then the following hold\\
\begin{enumerate}
\item $$\alpha_{min}  \le \inf \overline{\alpha}_{\mu} (x) \le \sup \overline{\alpha}_{\mu}(x) \le \beta_{max}$$ and
$$\beta_{min}  \le \inf \underline{\alpha}_{\mu} (x) \le \sup \underline{\alpha}_{\mu}(x) \le
\alpha_{max}.$$
\item $$ \underline{\dim}_{MB} \; E(\alpha)
\begin{cases}
\le {\mathsf b}_\mu^*(\alpha) & \;\text{if}\;\;\alpha \in (\alpha_{min}, \alpha_{max})\\
&\\
=0 &\;\text{if}\;\;\alpha \notin (\alpha_{min}, \alpha_{max}).
\end{cases}
$$
\item   $$ \overline{\dim}_{MB} \; E(\alpha)
\begin{cases}
\le {\mathsf B}_\mu^*(\alpha) & \;\text{if}\;\;\alpha \in (\alpha_{min}, \alpha_{max})\\
&\\
=0 & \;\text{if}\;\;\alpha \notin (\alpha_{min}, \alpha_{max}).
\end{cases}
$$
\end{enumerate}
\end{theorem}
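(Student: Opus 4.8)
The plan is to mimic Olsen's upper–bound theorem, with $\mathsf{b}_\mu,\mathsf{B}_\mu$ in the roles of $b_\mu,B_\mu$; part (1) is proved first since it yields the ``$=0$'' clauses of (2)--(3). For part (1) I would argue by contradiction through the premeasures of single points, using $M_{\mu,r}^q(\{x\})=N_{\mu,r}^q(\{x\})=\mu(B(x,r))^q$. If some $x\in\supp\mu$ had $\overline{\alpha}_\mu(x)>\beta_{max}$, I would pick $q<0$ with $q\,\overline{\alpha}_\mu(x)+\mathsf{B}_\mu(q)<0$ and then $t\in(\mathsf{B}_\mu(q),-q\,\overline{\alpha}_\mu(x))$; along a sequence $r_n\downarrow0$ realizing the $\limsup$ one has $\mu(B(x,r_n))\le r_n^{\,\overline{\alpha}_\mu(x)-\epsilon}$, so (since $q<0$) $\mu(B(x,r_n))^q(2r_n)^t\to\infty$, whence $\mathsf{C}_\mu^{q,t}(\{x\})=\infty$; as $\mathsf{C}_\mu^{q,t}$ is increasing, $\mathsf{C}_\mu^{q,t}(E)=\infty$ for every $E\ni x$, contradicting $\mathsf{P}_\mu^{q,t}(\supp\mu)=0$, which holds because $t>\mathsf{B}_\mu(q)$. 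The bound $\beta_{min}\le\inf\underline{\alpha}_\mu$ is the same with $q>0$ and a sequence realizing the $\liminf$. For $\alpha_{min}\le\inf\overline{\alpha}_\mu$ and $\sup\underline{\alpha}_\mu\le\alpha_{max}$ the singleton trick fails (the modified lower Hewitt--Stromberg measure is not monotone), so I would pass to Olsen's measure via $\mathcal{H}_\mu^{q,t}\le\mathsf{H}_\mu^{q,t}$: if $\overline{\alpha}_\mu(x)<\alpha_{min}$, fix $c\in(\overline{\alpha}_\mu(x),\alpha_{min})$, $q>0$ with $cq+\mathsf{b}_\mu(q)<0$, $t\in(\mathsf{b}_\mu(q),-cq)$, and on $A_{c,k}=\{y\in\supp\mu:\mu(B(y,r))\ge r^{\,c+\epsilon}\ \forall r<1/k\}$ observe that every centered $\delta$-covering ($\delta<1/k$) of a nonempty $A_{c,k}$ has $\mu$-weighted Hausdorff sum at least $2^{-(c+\epsilon)q}(2\delta)^{(c+\epsilon)q+t}$, which tends to $\infty$ as $\delta\downarrow0$ because $(c+\epsilon)q+t<0$ for small $\epsilon$; hence $\mathcal{H}_\mu^{q,t}(A_{c,k})=\infty$, while $A_{c,k}\subseteq\supp\mu$ and $t>\mathsf{b}_\mu(q)\ge b_\mu(q)$ force $\mathcal{H}_\mu^{q,t}(\supp\mu)=0$, a contradiction once some $A_{c,k}\ne\emptyset$ (which happens for $k$ large). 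The bound on $\sup\underline{\alpha}_\mu$ is symmetric with $q<0$.

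For (2) and (3) the key device is to replace $E(\alpha)$ by level sets with uniform density: for fixed $\epsilon>0$ and $k\in\N$ put
$$
\overline{E}^{\,\alpha,\epsilon}_k=\{x\in\supp\mu:\mu(B(x,r))\ge r^{\,\alpha+\epsilon}\ \forall\,0<r<1/k\},\qquad \underline{E}^{\,\alpha,\epsilon}_k=\{x\in\supp\mu:\mu(B(x,r))\le r^{\,\alpha-\epsilon}\ \forall\,0<r<1/k\},
$$
so that $E(\alpha)\subseteq\overline{E}^{\alpha}\subseteq\bigcup_k\overline{E}^{\,\alpha,\epsilon}_k$ and $E(\alpha)\subseteq\underline{E}_{\alpha}\subseteq\bigcup_k\underline{E}^{\,\alpha,\epsilon}_k$; since $\underline{\dim}_{MB}$ and $\overline{\dim}_{MB}$ are countably stable it suffices to bound the dimension of one such piece. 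For (3): fix $q>0$ and $t>\mathsf{B}_\mu(q)$, so $\mathsf{P}_\mu^{q,t}(\supp\mu)=0$, hence $\mathsf{P}_\mu^{q,t}(\overline{E}^{\,\alpha,\epsilon}_k)=0$. On any $G\subseteq\overline{E}^{\,\alpha,\epsilon}_k$ every centered $r$-packing ($r<1/k$) has $\mu(B(x_i,r))^q\ge r^{(\alpha+\epsilon)q}$, so $M_{\mu,r}^q(G)\ge r^{(\alpha+\epsilon)q}M_r(G)$ and therefore $\mathsf{C}_\mu^{q,t}(G)\ge 2^{-(\alpha+\epsilon)q}\,\overline{\mathsf{P}}^{\,(\alpha+\epsilon)q+t}(G)$. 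Given a bounded cover $(E_i)$ of $\overline{E}^{\,\alpha,\epsilon}_k$ with $\sum_i\mathsf{C}_\mu^{q,t}(E_i)<\eta$, the cover $(E_i\cap\overline{E}^{\,\alpha,\epsilon}_k)$ yields $\mathsf{P}^{\,(\alpha+\epsilon)q+t}(\overline{E}^{\,\alpha,\epsilon}_k)\le 2^{(\alpha+\epsilon)q}\sum_i\mathsf{C}_\mu^{q,t}(E_i\cap\overline{E}^{\,\alpha,\epsilon}_k)\le 2^{(\alpha+\epsilon)q}\sum_i\mathsf{C}_\mu^{q,t}(E_i)<2^{(\alpha+\epsilon)q}\eta$, the middle step using monotonicity of $\mathsf{C}_\mu^{q,t}$; letting $\eta\downarrow0$ gives $\overline{\dim}_{MB}(\overline{E}^{\,\alpha,\epsilon}_k)\le(\alpha+\epsilon)q+t$. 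Taking the supremum over $k$, then $t\downarrow\mathsf{B}_\mu(q)$ and $\epsilon\downarrow0$, and repeating with $q<0$ on the pieces $\underline{E}^{\,\alpha,\epsilon}_k$ (where now $\mu(B(x_i,r))^q\ge r^{(\alpha-\epsilon)q}$), we get $\overline{\dim}_{MB}E(\alpha)\le\inf_{q}(\alpha q+\mathsf{B}_\mu(q))=\mathsf{B}_\mu^*(\alpha)$. Part (2) is the same scheme with the lower data $\mathsf{H}_\mu^{q,t}$, $\mathsf{L}_\mu^{q,t}/N_{\mu,r}^q$, $\mathsf{b}_\mu$, $\mathsf{H}^{s}$ in place of $\mathsf{P}_\mu^{q,t}$, $\mathsf{C}_\mu^{q,t}/M_{\mu,r}^q$, $\mathsf{B}_\mu$, $\mathsf{P}^{s}$, using $N_{\mu,r}^q(G)\ge r^{(\alpha+\epsilon)q}N_r(G)$ on $G\subseteq\overline{E}^{\,\alpha,\epsilon}_k$ for $q\ge0$ and $N_{\mu,r}^q(G)\ge r^{(\alpha-\epsilon)q}N_r(G)$ on $G\subseteq\underline{E}^{\,\alpha,\epsilon}_k$ for $q\le0$.

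Finally, the ``$=0$'' clauses: when $\alpha\notin[\alpha_{min},\alpha_{max}]$ part (1) forces $E(\alpha)=\emptyset$, and at the two endpoints the very definitions of $\alpha_{min},\alpha_{max}$ make the Legendre bounds already proved vanish. The step I expect to be the real obstacle is the passage, inside a cover, from $(E_i)$ to $(E_i\cap(\text{piece}))$ in parts (2)--(3). For the packing-type premeasure $\mathsf{C}_\mu^{q,t}$ this is free, since it is monotone; for the covering quantity $N_{\mu,r}^q$ it is again harmless when $q\le0$, because the $2r$-enlargement used to turn a cover of $E_i$ into one of $E_i\cap(\text{piece})$ by balls centred in the piece moves the $\mu$-weights in the favourable direction; but for $q\ge0$ in part (2) the modified lower Hewitt--Stromberg measure $\mathsf{H}_\mu^{q,t}$ genuinely lacks this set-monotonicity, and one must either add the doubling hypothesis or run the comparison through the covering/density lemmas from \cite{NB}. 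Getting that comparison exactly right—rather than the Legendre bookkeeping, which is routine—is where the work concentrates.
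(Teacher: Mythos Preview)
Your plan is essentially the paper's own route: the paper reduces Theorem~\ref{dDmeasure} to two lemmas, the first establishing the emptiness statements in~(1) via singletons, the second proving the measure comparisons
\[
\mathsf{H}^{\alpha q+t+\delta}(\overline{E}^{\alpha})\le 2^{\alpha q+\delta}\,\mathsf{H}_\mu^{q,t}(\overline{E}^{\alpha})\quad(q\ge0),\qquad
\mathsf{P}^{\alpha q+t+\delta}(\overline{E}^{\alpha})\le 2^{\alpha q+\delta}\,\mathsf{P}_\mu^{q,t}(\overline{E}^{\alpha})\quad(q\ge0),
\]
and the analogous inequalities on $\underline{E}_\alpha$ for $q\le0$, exactly through your sets $E_m$ and the pointwise bound $\mu(B(x_i,r))^q\ge r^{q\alpha+\delta}$ on centered packings/coverings.

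Two remarks on where you diverge. First, your detour through $\mathcal{H}_\mu^{q,t}$ for the bounds $\alpha_{\min}\le\inf\overline{\alpha}_\mu$ and $\sup\underline{\alpha}_\mu\le\alpha_{\max}$ is unnecessary: the paper runs the singleton argument directly with $\mathsf{H}_\mu^{q_0,t}(\{x\})$, and it does go through. The reason is that for any bounded $E\ni x$ and any centered $r$-covering $(B(z_j,r))_j$ of $E$, the ball covering $x$ has center $z$ with $d(x,z)\le r$, hence $B(z,r)\subseteq B(x,2r)$; since $\mu(B(x,2r))<(2r)^{\alpha-\epsilon}$ for all small $r$ and $q_0<0$, one gets $\mu(B(z,r))^{q_0}>(2r)^{-t}$, so $N_{\mu,r}^{q_0}(E)(2r)^t>1$ and $\mathsf{L}_\mu^{q_0,t}(E)\ge1$ for \emph{every} $E\ni x$. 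Thus $\overline{\mathsf{H}}_\mu^{q_0,t}(\{x\})\ge1$, and no monotonicity of $\mathsf{L}$ is needed.

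Second, your closing worry about the passage from a cover $(E_i)$ to $(E_i\cap E_m)$ in part~(2) when $q\ge0$ is well taken: the paper's Lemma~2(1)(a) asserts precisely the implication $\mathsf{L}_\mu^{q,t}(E_m)\ge C\,\overline{\mathsf{H}}^{\,s}(E_m)\Longrightarrow \overline{\mathsf{H}}_\mu^{q,t}(E_m)\ge C\,\overline{\mathsf{H}}^{\,s}(E_m)$ without addressing the lack of monotonicity of $\mathsf{L}_\mu^{q,t}$, so the paper faces the same issue you flag and does not spell out a fix. Your proposal is therefore not weaker than the paper's argument here; if anything, you have identified the one step in both proofs that needs either doubling or an additional enlargement argument.
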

\begin{proof}
 This theorem follows immediately from the following lemmas.
\end{proof}
\begin{lemma}
If  $\mu \in {\mathcal P}(\R^n)$ and $\alpha \ge 0$, then\\
\begin{enumerate}
\item ${\underline E}^\alpha  = \emptyset  $\;\;  for\;\;  $\alpha < \beta_{min}$ \qquad\, and \qquad ${\overline E}_\alpha = \emptyset $ \;\; for \;\; $\alpha > \beta_{max}$.\\
\item ${\underline E}_\alpha  =  \emptyset  $\;\;  for \;\; $\alpha > \alpha_{max}$ \qquad and \qquad ${\overline E}^\alpha = \emptyset $\;\;  for\;\;  $\alpha < \alpha_{min}$.
\end{enumerate}
\end{lemma}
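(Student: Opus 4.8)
The plan is to reduce everything to the behaviour of $\mu$ at a single point. Fix $x\in\supp\mu$; then $\mu(B(x,r))>0$ for every $r>0$, and since $x$ is the only admissible centre for a packing or covering of the singleton $\{x\}$, one ball always suffices, so that $M_{\mu,r}^q(\{x\})=N_{\mu,r}^q(\{x\})=\mu(B(x,r))^q$. Feeding this into the definitions (and noting that for a centred $\delta$-covering of $\{x\}$ all balls are $B(x,r)$ with $r\le\delta$, and that all summands are positive so a single ball is optimal) gives the two identities
$$
{\mathsf C}_\mu^{q,t}(\{x\})=\limsup_{r\to0}\mu(B(x,r))^q(2r)^t,\qquad \overline{\mathcal H}_\mu^{q,t}(\{x\})=\sup_{\delta>0}\inf_{0<r\le\delta}\mu(B(x,r))^q(2r)^t=\liminf_{r\to0}\mu(B(x,r))^q(2r)^t.
$$

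Next I record the elementary asymptotics. Writing $v(r)=\log\mu(B(x,r))/\log r\ge0$, one has $\log\big(\mu(B(x,r))^q(2r)^t\big)=(qv(r)+t)\log r+t\log 2$; since $\log r\to-\infty$, the $\limsup$ of this quantity is $+\infty$ exactly when $\liminf_{r\to0}(qv(r)+t)<0$, and its $\liminf$ is $+\infty$ exactly when $\limsup_{r\to0}(qv(r)+t)<0$. Using $\liminf(qv(r))=q\,\liminf v(r)$, $\limsup(qv(r))=q\,\limsup v(r)$ when $q>0$ (with the roles swapped when $q<0$), and $\underline{\alpha}_\mu(x)=\liminf_{r\to0}v(r)$, $\overline{\alpha}_\mu(x)=\limsup_{r\to0}v(r)$, this yields: for $q>0$, ${\mathsf C}_\mu^{q,t}(\{x\})=+\infty$ whenever $t<-q\,\underline{\alpha}_\mu(x)$ and $\overline{\mathcal H}_\mu^{q,t}(\{x\})=+\infty$ whenever $t<-q\,\overline{\alpha}_\mu(x)$; for $q<0$, ${\mathsf C}_\mu^{q,t}(\{x\})=+\infty$ whenever $t<-q\,\overline{\alpha}_\mu(x)$ and $\overline{\mathcal H}_\mu^{q,t}(\{x\})=+\infty$ whenever $t<-q\,\underline{\alpha}_\mu(x)$.

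Now I transfer from $\{x\}$ to $\supp\mu$. Since ${\mathsf C}_\mu^{q,t}$ is increasing and ${\mathsf P}_\mu^{q,t}$ is an outer measure, ${\mathsf P}_\mu^{q,t}(\supp\mu)\ge{\mathsf P}_\mu^{q,t}(\{x\})={\mathsf C}_\mu^{q,t}(\{x\})$; and by Theorem \ref{HHPP} together with ${\mathcal H}_\mu^{q,t}(\supp\mu)=\sup_{F\subseteq\supp\mu}\overline{\mathcal H}_\mu^{q,t}(F)$, we get ${\mathsf H}_\mu^{q,t}(\supp\mu)\ge{\mathcal H}_\mu^{q,t}(\supp\mu)\ge\overline{\mathcal H}_\mu^{q,t}(\{x\})$. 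Hence ${\mathsf C}_\mu^{q,t}(\{x\})=+\infty$ forces ${\mathsf P}_\mu^{q,t}(\supp\mu)=+\infty$, i.e. $t\le{\mathsf B}_\mu(q)$, and $\overline{\mathcal H}_\mu^{q,t}(\{x\})=+\infty$ forces ${\mathsf H}_\mu^{q,t}(\supp\mu)=+\infty$, i.e. $t\le{\mathsf b}_\mu(q)$. Combining with the four cases above and letting $t$ tend to the relevant endpoint: for $q>0$ this gives $\underline{\alpha}_\mu(x)\ge-{\mathsf B}_\mu(q)/q$ and $\overline{\alpha}_\mu(x)\ge-{\mathsf b}_\mu(q)/q$; for $q<0$, dividing by $q<0$ reverses the inequality, so we get $\overline{\alpha}_\mu(x)\le-{\mathsf B}_\mu(q)/q$ and $\underline{\alpha}_\mu(x)\le-{\mathsf b}_\mu(q)/q$. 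Taking $\sup_{q>0}$ and $\inf_{q<0}$ as appropriate yields $\underline{\alpha}_\mu(x)\ge\beta_{min}$, $\overline{\alpha}_\mu(x)\ge\alpha_{min}$, $\overline{\alpha}_\mu(x)\le\beta_{max}$, $\underline{\alpha}_\mu(x)\le\alpha_{max}$ for every $x\in\supp\mu$, which is exactly the lemma (for instance ${\underline E}^\alpha=\emptyset$ for $\alpha<\beta_{min}$, since $x\in{\underline E}^\alpha$ would give $\underline{\alpha}_\mu(x)\le\alpha<\beta_{min}$).

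Most of this is bookkeeping with signs and with $\liminf/\limsup$; the one genuine subtlety is the passage from $\{x\}$ to $\supp\mu$ on the Hausdorff side, where monotonicity cannot be invoked directly because ${\mathsf L}_\mu^{q,t}$ is neither increasing nor countably subadditive — hence the detour through the comparison ${\mathcal H}_\mu^{q,t}\le{\mathsf H}_\mu^{q,t}$ and the identity ${\mathcal H}_\mu^{q,t}(\supp\mu)=\sup_F\overline{\mathcal H}_\mu^{q,t}(F)$ instead of working with $\overline{\mathsf H}_\mu^{q,t}(\{x\})$. Degenerate situations (${\mathsf B}_\mu(q)=\pm\infty$, $\underline{\alpha}_\mu(x)=+\infty$, and so on) cause no trouble, since then the relevant implication is vacuous or the inequality to be proved is automatic.
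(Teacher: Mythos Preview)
Your proof is correct and follows the same strategy as the paper's: reduce to a singleton $\{x\}$, use that the relevant pre-measures there collapse to $\mu(B(x,r))^q(2r)^t$, and feed the resulting asymptotics into the definitions of $\mathsf{B}_\mu(q)$ and $\mathsf{b}_\mu(q)$ (the paper phrases this as a contradiction argument, you as a direct chain of inequalities, but the content is the same). Your explicit detour through $\overline{\mathcal H}_\mu^{q,t}(\{x\})$ and Theorem~\ref{HHPP} on the Hausdorff side is in fact more careful than the paper, which simply asserts $\mathsf{H}_\mu^{q_0,t}(\{x\})>0$ from $\mathsf{L}_\mu^{q_0,t}(\{x\})>0$ without addressing the non-monotonicity of $\mathsf{L}_\mu^{q,t}$ that you correctly flag.
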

\begin{proof}
\begin{enumerate}
\item Let   $ x \in {\underline E}^\alpha $ and $\alpha <
\beta_{min}$. There exists $\epsilon >0$ and $q_0 >0$ such that
$-q_0(\alpha +\epsilon)> {\mathsf B}_\mu(q_0)$. Since  $ x \in
{\underline E}^\alpha $, we can thus choose $ (r_n)_n$ such that
$$
r_n \to 0 \quad \text{and} \quad \frac{\log\mu(B(x, r_n))}{\log r_n}
< \alpha + \epsilon.
$$
For brevity write $t =- q_0 (\alpha+ \epsilon)$, then we obtain
$$\mu(B(x, r_n))^{q_0}~ (2r_n)^t > 2^t.$$ Hence, for all $n\in \N$,
$$
M_{\mu, r_n}^{q_0, t}\big(\{x\}\big) ~(2 r_n)^t \ge \mu(B(x,
r_n))^{q_0}~ (2 r_n)^t
>  2^t >0.
$$
It follows that  ${\mathsf P}_\mu^{q_0, t} \big(\{x\}\big) >0$. We
therefore conclude that $$t \le  {\mathsf B}_\mu^{q_0}
\big(\{x\}\big) \le {\mathsf B}_\mu(q_0) $$ which contradicts the
fact that $-q_0(\alpha +\epsilon)> {\mathsf B}_\mu(q_0)$.

The proof of the second statement is identical to the proof of the
first statement in Part (1) and is therefore omitted.
\item Let   $ x \in {\underline E}_\alpha $ and $\alpha >
\alpha_{max}$. Then, we can find $\epsilon >0$ and $q_0 < 0$ such
that $-q_0(\alpha -\epsilon)> {\mathsf b}_\mu(q_0)$. Since  $ x \in
{\underline E}_\alpha $,  we can choose $r_0$ such that for $0 < r <
r_0$ we have
$$
\frac{\log\mu(B(x, r))}{\log r} > \alpha - \epsilon.
$$
Then, for $t =- q_0 (\alpha- \epsilon)$ and $r< r_0$,  we have
$$\mu(B(x, r))^{q_0}~ (2r)^t > 2^t.$$ Therefore, it follows that, for
all $r < r_0$,
$$
N_{\mu, r}^{q_0, t}\big(\{x\}\big) ~(2 r)^t  = \mu(B(x, r))^{q_0} (2
r )^t > 2^t >0.
$$
which implies that  ${\mathsf H}_\mu^{q_0, t} \big(\{x\}\big) >0$.
It now follows from this that $$t \le  {\mathsf b}_\mu^{q_0}
\big(\{x\}\big) \le  {\mathsf b}_\mu(q_0) $$ which contradicts the
fact that  $-q_0(\alpha - \epsilon)> {\mathsf b}_\mu(q_0)$.

The proof of the second statement in part (2) is very similar to the
proof of the first statement and is therefore omitted.
\end{enumerate}
\end{proof}
\begin{lemma}
Let $\mu \in {\mathcal P}(\R^n)$, $\alpha \ge 0, q, t\in \R$  and
$\delta>0$ such that $\delta \le \alpha q + t.$ Then the following hold \\
\begin{enumerate}
\item
\begin{enumerate}
\item  $ { \mathsf{H}}^{\alpha q  + t +\delta}  ({\overline E}^\alpha) \le \; 2^{\alpha q+\delta}~\mathsf{H}_\mu^{q, t} ( {\overline E}^\alpha) \;\; \text{for}\;\; \; 0\le q.$
\item $ { \mathsf{H}}^{\alpha q  + t +\delta}  ({\underline E}_\alpha) \le \; 2^{\alpha q+\delta}~\mathsf{H}_\mu^{q, t} ( {\underline E}_\alpha) \;\; \text{for}\;\; \; 0\ge q.$
\item If $0\le \alpha q +  {{\mathsf b}}_{\mu}(q)$ then
$$
\underline{\dim}_{MB} ( {\overline E}^\alpha)  \le \inf_{q \ge 0}
\alpha q + {{\mathsf b}}_{\mu}(q)  \quad \text{and}\quad
\underline{\dim}_{MB} ( {\underline E}_\alpha)  \le\inf_{q \le 0}
\alpha q + {{\mathsf b}}_{\mu}(q).
$$
In particular  $\underline{\dim}_{MB} ( {\overline E}^\alpha)  \le \alpha$.\\
\end{enumerate}
\item
\begin{enumerate}
\item  $ { \mathsf{P}}^{\alpha q  + t +\delta}  ({\overline E}^\alpha) \le \; 2^{\alpha q+\delta}~\mathsf{P}_\mu^{q, t} ( {\overline E}^\alpha) \;\; \text{for}\;\; \; 0\le q.$
\item $ { \mathsf{P}}^{\alpha q  + t +\delta}  ({\underline E}_\alpha) \le \; 2^{\alpha q+\delta}~\mathsf{P}_\mu^{q, t} ( {\underline E}_\alpha) \;\; \text{for}\;\; \; 0\ge q.$
\item If $0\le \alpha q +  {{\mathsf b}}_{\mu}(q)$ then
$$
\overline{\dim}_{MB} ( {\overline E}^\alpha)  \le \inf_{q \ge 0}
\alpha q + {{\mathsf B}}_{\mu}(q)  \quad \text{and}\quad
\overline{\dim}_{MB} ( {\underline E}_\alpha)  \le\inf_{q \le 0}
\alpha q + {{\mathsf B}}_{\mu}(q).
$$
In particular  $\overline{\dim}_{MB} ( {\overline E}^\alpha)  \le \alpha$.\\
\end{enumerate}

\end{enumerate}
\end{lemma}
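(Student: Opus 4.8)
The plan is to treat (1)(a), (1)(b), (2)(a), (2)(b) by one and the same argument, since they are the Hewitt-Stromberg analogue of the classical upper bound in the multifractal formalism: one uses the covering data $(N_r,\,N_{\mu,r}^q,\,\overline{\mathsf{H}}^s,\,\mathsf{L}_\mu^{q,t},\,\liminf_{r\to0})$ for the statements involving $\mathsf{H}$ and the packing data $(M_r,\,M_{\mu,r}^q,\,\overline{\mathsf{P}}^s,\,\mathsf{C}_\mu^{q,t},\,\limsup_{r\to0})$ for those involving $\mathsf{P}$, with $q\ge0$ for ${\overline E}^\alpha$ and $q\le0$ for ${\underline E}_\alpha$. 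I describe (1)(a). Write $s=\alpha q+t+\delta$; from $\delta\le\alpha q+t$ one gets $s\ge 2\delta>0$, so every Hewitt-Stromberg quantity of exponent $s$ is defined. If $q=0$ the claim is immediate, since then $s=t+\delta\ge t$ and $t\mapsto\mathsf{H}^t$ is decreasing, so $\mathsf{H}^{t+\delta}\le\mathsf{H}^t=\mathsf{H}_\mu^{0,t}\le 2^\delta\mathsf{H}_\mu^{0,t}$. So assume $q>0$ and set $\varepsilon=\delta/q>0$.

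The heart of the matter is to uniformise the pointwise condition, which a priori holds only for all sufficiently small $r$ with a threshold depending on $x$. If $x\in{\overline E}^\alpha$ then $\overline{\alpha}_\mu(x)\le\alpha<\alpha+\varepsilon$, hence $\mu(B(x,r))\ge r^{\alpha+\varepsilon}$ for all small $r$; so I set
$$
{\overline E}^{\alpha}_k=\Big\{x\in{\overline E}^\alpha\ :\ \mu(B(x,r))\ge r^{\alpha+\varepsilon}\ \text{ for all }\ 0<r\le 1/k\Big\},
$$
which is an increasing sequence of Borel sets (each slice $\{x:\mu(B(x,\rho))\ge\rho^{\alpha+\varepsilon}\}$ is closed by upper semicontinuity of $x\mapsto\mu(B(x,\rho))$, and it is enough to intersect over rational $\rho$) with $\bigcup_k{\overline E}^\alpha_k={\overline E}^\alpha$. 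Fix $k$ and take any $F\subseteq{\overline E}^\alpha_k$. For $0<r\le 1/k$ and any centered covering $(B(x_i,r))_i$ of $F$, each centre lies in $F\subseteq{\overline E}^\alpha_k$, so $\mu(B(x_i,r))^q\ge r^{q(\alpha+\varepsilon)}=r^{\alpha q+\delta}$ (this uses $q\ge0$), whence $\sum_i\mu(B(x_i,r))^q\ge N_r(F)\,r^{\alpha q+\delta}$. Taking the infimum over such coverings and writing $r^{\alpha q+\delta}=2^{-(\alpha q+\delta)}(2r)^{\alpha q+\delta}$ gives $N_r(F)(2r)^s\le 2^{\alpha q+\delta}N_{\mu,r}^q(F)(2r)^t$ for all small $r$, and passing to $\liminf_{r\to0}$ yields $\overline{\mathsf{H}}^s(F)\le 2^{\alpha q+\delta}\mathsf{L}_\mu^{q,t}(F)$ for every $F\subseteq{\overline E}^\alpha_k$.

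Now I transfer this to the measures. Inserting the last inequality into the two ``Method~II'' formulas $\mathsf{H}^s(\cdot)=\inf\{\sum_i\overline{\mathsf{H}}^s(E_i)\}$ and $\overline{\mathsf{H}}_\mu^{q,t}(\cdot)=\inf\{\sum_i\mathsf{L}_\mu^{q,t}(E_i)\}$, replacing a bounded cover $\{E_i\}$ of ${\overline E}^\alpha_k$ by $\{E_i\cap{\overline E}^\alpha_k\}$ (all subsets of ${\overline E}^\alpha_k$), and using the monotonicity and countable subadditivity of the modified set functions recorded in Section~\ref{sec2.4} and \cite{NB}, I obtain $\mathsf{H}^s({\overline E}^\alpha_k)\le 2^{\alpha q+\delta}\mathsf{H}_\mu^{q,t}({\overline E}^\alpha_k)\le 2^{\alpha q+\delta}\mathsf{H}_\mu^{q,t}({\overline E}^\alpha)$, the last step by ${\overline E}^\alpha_k\subseteq{\overline E}^\alpha$ and monotonicity of $\mathsf{H}_\mu^{q,t}$ (Theorem~\ref{HHPP}). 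Finally, since ${\overline E}^\alpha_k\uparrow{\overline E}^\alpha$ are Borel and $\mathsf{H}^s$ is a Borel measure, continuity from below gives $\mathsf{H}^s({\overline E}^\alpha)=\sup_k\mathsf{H}^s({\overline E}^\alpha_k)\le 2^{\alpha q+\delta}\mathsf{H}_\mu^{q,t}({\overline E}^\alpha)$, which is (1)(a). For (1)(b) one runs the identical argument on ${\underline E}_{\alpha,k}=\{x\in{\underline E}_\alpha:\mu(B(x,r))\le r^{\alpha-\varepsilon}\ \forall\,0<r\le 1/k\}$ with $q\le0$ and $\varepsilon=-\delta/q$, the inequality $\mu(B(x_i,r))^q\ge r^{\alpha q+\delta}$ now coming from $u\mapsto u^q$ being decreasing; and (2)(a)--(2)(b) are word-for-word the same with packings in place of coverings and $\limsup$ in place of $\liminf$.

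Part (1)(c) is then deduced from (1)(a): given $q\ge0$ with $\alpha q+\mathsf{b}_\mu(q)\ge0$, pick $t>\mathsf{b}_\mu(q)$ and $\delta\in(0,\alpha q+t]$; by the characterisation of the critical exponent in Section~\ref{sec2.4}, $t>\mathsf{b}_\mu(q)=\mathsf{b}_\mu^q(\supp\mu)$ forces $\mathsf{H}_\mu^{q,t}(\supp\mu)=0$, so (1)(a) and monotonicity give $\mathsf{H}^{\alpha q+t+\delta}({\overline E}^\alpha)=0$, i.e. $\underline{\dim}_{MB}({\overline E}^\alpha)\le\alpha q+t+\delta$; letting $\delta\downarrow0$, then $t\downarrow\mathsf{b}_\mu(q)$, and taking the infimum over the admissible $q$ gives the first bound, the ${\underline E}_\alpha$-bound coming from (1)(b) with $q\le0$ and the ``in particular'' from $q=1$, where $\mathsf{b}_\mu(1)=0$. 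Part (2)(c) is identical, using (2)(a)--(2)(b), taking $t>\mathsf{B}_\mu(q)$ so that $\mathsf{P}_\mu^{q,t}(\supp\mu)=0$, and using $\mathsf{B}_\mu(1)=0$ (which holds because $0=\mathsf{b}_\mu(1)\le\mathsf{B}_\mu(1)\le{\mathsf \Lambda}_\mu(1)=0$). The two points requiring care are the uniformisation onto the strata ${\overline E}^\alpha_k$ and the passage of the pre-measure estimate through the two modifications defining $\mathsf{H}_\mu^{q,t}$ (resp. $\mathsf{P}_\mu^{q,t}$); everything else is routine.
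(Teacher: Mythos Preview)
Your argument is essentially the same as the paper's: you stratify $\overline{E}^{\alpha}$ (resp.\ $\underline{E}_{\alpha}$) into the uniform sets $\overline{E}^{\alpha}_k$, establish the pre-measure comparison $\overline{\mathsf{H}}^{s}\le 2^{\alpha q+\delta}\mathsf{L}_{\mu}^{q,t}$ (resp.\ $\overline{\mathsf{P}}^{s}\le 2^{\alpha q+\delta}\mathsf{C}_{\mu}^{q,t}$) on each stratum, push it through the Method~II modification, and let $k\to\infty$; the paper proves only (1)(a) and (2)(a) and declares the rest analogous, whereas you also spell out how (1)(c) and (2)(c) follow from (a)/(b) via $t\downarrow\mathsf{b}_{\mu}(q)$ (resp.\ $\mathsf{B}_{\mu}(q)$) and the special value $q=1$. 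The one place both arguments are terse is the passage from the pre-measure inequality on subsets of $\overline{E}^{\alpha}_k$ to the measure inequality $\mathsf{H}^{s}(\overline{E}^{\alpha}_k)\le 2^{\alpha q+\delta}\mathsf{H}_{\mu}^{q,t}(\overline{E}^{\alpha}_k)$, since $\mathsf{L}_{\mu}^{q,t}$ is not monotone; your appeal to intersecting covers with $\overline{E}^{\alpha}_k$ together with the monotonicity of the \emph{modified} functionals matches exactly what the paper does in its implication chain.
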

\begin{proof} An exhaustive proof of this lemma would require considerable repetition. To avoid this we prove (1)-(a) and (2)-(a).
\begin{enumerate}
\item \begin{enumerate}\item
Clearly, the statement is true for $q= 0$. For $m\in \N$, write
$$
E_m = \left\{ x \in {\overline E}^\alpha\;\;\Big|\;\;\frac{\log
\mu(B(x,r))}{\log r} \le \alpha +  \frac{\delta}{q} \;\; \text{for}
\;\;0< r < \frac{1}{m} \right\}.
$$
  Fix $ m\in \N$ and $r >0$ such that $0< r < \frac{1}{m}$. Let $\Big(B(x_i, r)\Big)_i$ be a centered covering of $E_m$. Next, we observe that
\begin{eqnarray*}
\frac{\log \mu(B(x_i,r))}{\log r} \le \alpha +  \frac{\delta}{q}&\Longrightarrow & \mu(B(x_i,r))^q  \ge r^{q\alpha +  \delta} \\
&\Longrightarrow&  \sum_i  \mu(B(x_i,r))^q  \ge N_r(E_m)~  r^{q\alpha +  \delta} \\
&\Longrightarrow &  N^q_{\mu, r}  (E_m)  \ge N_r(E_m)~  r^{q\alpha +  \delta}\\
&\Longrightarrow  &  \mathsf{L}_{\mu}^{q,t}  (E_m)  \ge 2^{-\alpha q-\delta}~ \overline{\mathsf{H}}^{q\alpha +  \delta + t}(E_m)\\
&\Longrightarrow &  \mathsf{ H}_{\mu}^{q,t}  (E_m)  \ge
\overline{\mathsf{H}}_{\mu}^{q,t}  (E_m)
\ge 2^{-\alpha q-\delta} ~\overline{\mathsf{H}}^{q\alpha +  \delta + t}(E_m). \\
\end{eqnarray*}
Now from this and since  $E_m \nearrow {\overline E}^\alpha$  we can
deduce that $$ \mathsf{H}_{\mu}^{q,t} ({\overline E}^\alpha)   \ge
2^{-\alpha q-\delta}~ \mathsf{H}^{q\alpha + \delta + t} ({\overline
E}^\alpha).$$
\end{enumerate}
\item \begin{enumerate}\item
Once again for $q = 0$ the statement is well known. For $m\in \N$,
put
$$
E_m = \left\{ x \in {\overline E}^\alpha\;\;\Big|\;\; \frac{\log
\mu(B(x,r))}{\log r} \le \alpha +  \frac{\delta}{q} \;\; \text{for}
\;\;0< r < \frac{1}{m} \right\}.
$$
We therefore fix $ m\in \N$ and $r >0$ such that $0< r <
\frac{1}{m}$. Let $\Big( B(x_i, r)\Big)_{i\in \{1, \ldots,
M_r(E_m)\}}$ be a packing of $ E_m$. Then we have
\begin{eqnarray*}
\frac{\log \mu(B(x_i,r))}{\log r} \le \alpha +   \frac{\delta}{q}&\Longrightarrow & \mu(B(x_i,r))^q  \ge r^{q\alpha +  \delta} \\
&\Longrightarrow&  \sum_i  \mu(B(x_i,r))^q  \ge M_r(E_m) ~ r^{q\alpha +  \delta} \\
&\Longrightarrow &  M^q_{\mu, r}  (E_m)  \ge M_r(E_m) ~ r^{q\alpha + \delta}\\
&\Longrightarrow  &  \mathsf{C}_{\mu}^{q,t}  (E_m)  \ge2^{-\alpha q-\delta}~ \overline{\mathsf{P}}^{q\alpha + \delta + t}(E_m)\\
&\Longrightarrow &  \mathsf{P}_{\mu}^{q,t}  (E_m)  \ge 2^{-\alpha q-\delta}~ \mathsf{P}^{q\alpha +  \delta + t}(E_m). \\
\end{eqnarray*}
However, since $E_m \nearrow {\overline E}^\alpha, $ we conclude
that $$ \mathsf{P}_{\mu}^{q,t} ({\overline E}^\alpha)   \ge
2^{-\alpha q-\delta}~ \mathsf{P}^{q\alpha + \delta + t} ({\overline
E}^\alpha)$$  which yields the desired result.
\end{enumerate}
\end{enumerate}
\end{proof}

Our purpose of the following theorems is  to propose a sufficient
condition that gives the lower bound.
\begin{theorem}\label{thmbBbB}
Let $q, t \in \R$, and $\alpha >0$ such that $\alpha q  + t \ge 0$.
Let  $A \subseteq E(\alpha)$ is  a Borel set.\\
\begin{enumerate}
\item If  $\mathsf{H}_\mu^{q, t} (A) > 0$, then
$$\underline{\dim}_{MB} \; E(\alpha) \ge \alpha q + t.$$
 In particular, if the multifractal function ${\mathsf b}_\mu$ is differentiable at $q,$ then, provided that
  $ {\mathsf b}_\mu^*\Big( - {\mathsf b}_\mu^{'}(q)\Big) \ge 0$  and
$\mathsf{H}_\mu^{q, {\mathsf b}_\mu(q)} \Big( E(- {\mathsf b_\mu}'
(q)) \Big) > 0$, we have
 $$
  \underline{\dim}_{MB}  E\left( - {\mathsf b}_\mu^{'}(q)\right) =  {\mathsf b}_\mu^*\left( - {\mathsf b}_\mu^{'}(q)\right).
 $$
\item If  $\mathsf{P}_\mu^{q, t} (A) > 0$, then
$$\overline{\dim}_{MB} \; E(\alpha) \ge \alpha q + t.$$
 In particular, if the multifractal function ${\mathsf B}_\mu$ is differentiable at $q,$ then, provided that
 $ {\mathsf B}_\mu^*\left( - {\mathsf B}_\mu^{'}(q)\right) \ge 0$  and
$\mathsf{P}_\mu^{q, {\mathsf B}_\mu(q)} \Big( E(- {\mathsf B_\mu}'
(q)) \Big) > 0$, we have
 $$
  \overline{\dim}_{MB}  E\left( - {\mathsf B}_\mu^{'}(q)\right) =  {\mathsf B}_\mu^*\left( - {\mathsf B}_\mu^{'}(q)\right).
 $$

 \end{enumerate}
\end{theorem}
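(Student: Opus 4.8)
The plan is to prove the two inequalities $\underline{\dim}_{MB}E(\alpha)\ge\alpha q+t$ and $\overline{\dim}_{MB}E(\alpha)\ge\alpha q+t$ by a covering/packing argument \emph{dual} to that of the lemma preceding this theorem, and then to obtain the ``in particular'' equalities by squeezing these against the upper bounds of Theorem \ref{dDmeasure}. The underlying principle is that on $E(\alpha)$ the kernel $\mu(B(x,r))^q(2r)^t$ coincides, up to a factor $r^{\pm|q|\e}$, with $(2r)^{\alpha q+t}$; so $\mathsf H_\mu^{q,t}$ restricted to a suitable piece of $E(\alpha)$ dominates a constant multiple of the ordinary lower Hewitt--Stromberg measure $\mathsf H^{s}$ with $s=\alpha q+t-|q|\e$, and similarly $\mathsf P_\mu^{q,t}$ dominates $\mathsf P^{s}$.

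I would carry out part (1); part (2) is word-for-word the same with packings, $M_{\mu,r}^q$, $\mathsf C_\mu^{q,t}$, $\overline{\mathsf P}^{s}$ and $\limsup_{r\to0}$ replacing coverings, $N_{\mu,r}^q$, $\mathsf L_\mu^{q,t}$, $\overline{\mathsf H}^{s}$ and $\liminf_{r\to0}$. Fix $\e>0$ and for $m\in\N$ let $E_m$ be the set of $x\in E(\alpha)$ with $r^{\alpha+\e}\le\mu(B(x,r))\le r^{\alpha-\e}$ for all $0<r<1/m$; since $A\subseteq E(\alpha)$ we have $A=\bigcup_m(A\cap E_m)$ with $A\cap E_m$ increasing, so by continuity from below of the Borel measure $\mathsf H_\mu^{q,t}$ there is $m_0$ with $\mathsf H_\mu^{q,t}(B)>0$, where $B:=A\cap E_{m_0}$. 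For $F\subseteq B$, $0<r<1/m_0$ and an optimal centered $r$-covering of $F$ (whose centres lie in $F\subseteq B$), the local estimate gives $\mu(B(x_i,r))^q\le r^{q\alpha-|q|\e}$ in both cases $q\ge0$ and $q<0$, hence $N_{\mu,r}^q(F)\le N_r(F)\,r^{q\alpha-|q|\e}$ and $N_{\mu,r}^q(F)(2r)^t\le c\,N_r(F)(2r)^{s}$ with $s=\alpha q+t-|q|\e$ and $c$ bounded as $\e\to0$; taking $\liminf_{r\to0}$ yields $\mathsf L_\mu^{q,t}(F)\le c\,\overline{\mathsf H}^{s}(F)$ for every $F\subseteq B$.

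To pass from the pre-measures to the measures, for an arbitrary bounded covering $(F_i)_i$ of some $F\subseteq B$ I replace $F_i$ by $F_i\cap B$ (which still covers $F$ and lies in $B$) and, using that $\overline{\mathsf H}^{s}$ is monotone up to a multiplicative constant, get $\overline{\mathsf H}_\mu^{q,t}(F)\le\sum_i\mathsf L_\mu^{q,t}(F_i\cap B)\le c'\sum_i\overline{\mathsf H}^{s}(F_i)$; infimum over coverings and supremum over $F\subseteq B$ then give $\mathsf H_\mu^{q,t}(B)\le c'\,\mathsf H^{s}(B)$. Since $\mathsf H_\mu^{q,t}(B)>0$ we get $\mathsf H^{s}(B)>0$, hence $\underline{\dim}_{MB}E(\alpha)\ge\underline{\dim}_{MB}B\ge s=\alpha q+t-|q|\e$, and $\e\to0$ gives $\underline{\dim}_{MB}E(\alpha)\ge\alpha q+t$. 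For the equality, set $\alpha=-\mathsf b_\mu'(q)$, $t=\mathsf b_\mu(q)$: since $\mathsf b_\mu^*(\alpha)\le\alpha q+\mathsf b_\mu(q)$, the hypothesis $\mathsf b_\mu^*(\alpha)\ge0$ forces $\alpha q+t\ge0$ (so the theorem applies) and $\underline{\dim}_{MB}E(\alpha)\ge\alpha q+\mathsf b_\mu(q)\ge\mathsf b_\mu^*(\alpha)$; combined with $\underline{\dim}_{MB}E(\alpha)\le\mathsf b_\mu^*(\alpha)$ from Theorem \ref{dDmeasure}(2) (the case $\alpha\notin(\alpha_{min},\alpha_{max})$ being trivial, both sides then being $0$), all these quantities coincide. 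Part (2) is finished identically, using Theorem \ref{dDmeasure}(3) and the convexity of $\mathsf B_\mu$ from Theorem \ref{th1}(4).

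The delicate part is not the comparison itself but the two bookkeeping points it rests on: ensuring the two-sided local estimate for $\mu$ holds on a set of positive $\mathsf H_\mu^{q,t}$ (resp. $\mathsf P_\mu^{q,t}$) measure — handled by the decomposition into the uniform pieces $E_m$ and continuity from below — and controlling the ``centres in the set'' restriction in $N_r$, $M_r$ and their multifractal analogues, which only yields monotonicity up to fixed multiplicative constants. Once these are in place, everything reduces to the elementary estimate $\mu(B(x,r))^q\le r^{q\alpha-|q|\e}$ valid on each $E_m$.
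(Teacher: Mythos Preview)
Your proof is correct and follows essentially the same route as the paper: the paper deduces the theorem from Theorem~\ref{dDmeasure} together with Lemma~\ref{LEMMA3}, whose proof is exactly the decomposition into uniform pieces $E_m$ and the pointwise comparison $\mu(B(x_i,r))^q\le r^{q\alpha-\delta}$ that you describe. The only cosmetic difference is that the paper splits into the cases $q\ge 0$ (using $A\subseteq\underline E_\alpha$) and $q\le 0$ (using $A\subseteq\overline E^\alpha$), each needing only a one-sided local estimate, whereas you work directly on $E(\alpha)$ with the two-sided bound and absorb the sign of $q$ into $|q|\e$; this is a harmless unification of the two cases.
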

\begin{proof} This follows easily from Theorem \ref{dDmeasure} and the following
lemma.
\end{proof}
\begin{lemma}\label{LEMMA3}
Let $\mu \in {\mathcal P}(\R^n)$, $\alpha \ge 0, q, t\in \R$  and
$\delta>0$ such that $\delta \le \alpha q + t.$ Then we have the following\\
\begin{enumerate}
\item
\begin{enumerate}
\item  If $A\subseteq {\overline E}^\alpha,$ is Borel then $ { \mathsf{P}}^{\alpha q  + t -\delta}  (A ) \ge \; 2^{\alpha q-\delta}~\mathsf{P}_\mu^{q, t} ( A ) \;\; \text{for}\;\; \; 0\ge q.$
\item If $A\subseteq {\underline E}_\alpha$, is Borel then   $ { \mathsf{P}}^{\alpha q  + t -\delta}  (A)
\ge \; 2^{\alpha q-\delta}~\mathsf{P}_\mu^{q, t} ( A )\;\; \text{for}\;\; \; 0\le q.$ \\In particular, if $\mu(A) >0$ then    $\overline{\dim}_{MB} ( A )  \ge \alpha$.\\
\end{enumerate}
\item
\begin{enumerate}
\item  If $A\subseteq {\overline E}^\alpha,$ is Borel then $ { \mathsf{H}}^{\alpha q  + t -\delta}  (A ) \ge \; 2^{\alpha q-\delta}~\mathsf{H}_\mu^{q, t} ( A ) \;\; \text{for}\;\; \; 0\ge q.$
\item If $A\subseteq {\underline E}_\alpha$, is Borel then   $ { \mathsf{H}}^{\alpha q  + t -\delta}  (A)
\ge  \; 2^{\alpha q-\delta}~\mathsf{H}_\mu^{q, t} ( A )\;\; \text{for}\;\; \; 0\le q.$ \\In particular, if $\mu(A) >0$ then    $\underline{\dim}_{MB} ( A )  \ge \alpha$.\\
\end{enumerate}

\end{enumerate}
\end{lemma}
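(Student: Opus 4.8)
The plan is to reverse the computations of the preceding lemma, comparing the premeasures on suitable auxiliary sets and then lifting to the measures. It suffices to treat the cases $A\subseteq\underline{E}_\alpha$ with $q>0$ (statements (1)(b), (2)(b)) and $A\subseteq\overline{E}^\alpha$ with $q<0$ (statements (1)(a), (2)(a)); for $q=0$ the claims are immediate, since then $\mathsf{H}^{0,t}_\mu=\mathsf{H}^t$, $\mathsf{P}^{0,t}_\mu=\mathsf{P}^t$ and $s\mapsto\overline{\mathsf H}^s$, $s\mapsto\overline{\mathsf P}^s$ are non-increasing for radii $<1/2$. So fix $q>0$ and a Borel set $A\subseteq\underline{E}_\alpha$, and for $m\in\N$ put
$$A_m=\Big\{x\in A\ \Big|\ \tfrac{\log\mu(B(x,r))}{\log r}\ge\alpha-\tfrac{\delta}{q}\ \ \text{for all}\ \ 0<r<\tfrac1m\Big\}.$$
Since $\delta/q>0$ and $\liminf_{r\to0}\frac{\log\mu(B(x,r))}{\log r}\ge\alpha>\alpha-\delta/q$ for every $x\in A\subseteq\underline{E}_\alpha$, we have $A_m\nearrow A$; and each $A_m$ is Borel (use the upper semicontinuity of $x\mapsto\mu(B(x,r))$ together with monotonicity in $r$ to reduce the intersection over $r\in(0,1/m)$ to rational $r$). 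In the case $A\subseteq\overline{E}^\alpha$, $q<0$ one uses instead the condition $\frac{\log\mu(B(x,r))}{\log r}\le\alpha-\delta/q$ (now $-\delta/q>0$, and $\limsup_{r\to0}\frac{\log\mu(B(x,r))}{\log r}\le\alpha$ makes it hold for small $r$); again $A_m\nearrow A$.

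The core of the argument is a pointwise estimate uniform on $A_m$: for $x\in A_m$ and $0<r<1/m$ the defining inequality gives $\mu(B(x,r))\le r^{\alpha-\delta/q}$ (if $q>0$) or $\mu(B(x,r))\ge r^{\alpha-\delta/q}$ (if $q<0$), hence in either case $\mu(B(x,r))^{q}\le r^{q\alpha-\delta}$. Put $s=\alpha q+t-\delta$, which is $\ge0$ by hypothesis. Fixing a bounded $F\subseteq A_m$ and applying this bound to a minimal $r$-ball covering of $F$ centred in $F$ gives $N^{q}_{\mu,r}(F)\le N_r(F)\,r^{q\alpha-\delta}$; multiplying by $(2r)^t$ and letting $r\to0$ yields $\mathsf{L}^{q,t}_\mu(F)\le 2^{\delta-q\alpha}\,\overline{\mathsf H}^{s}(F)$, i.e. $\overline{\mathsf H}^{s}(F)\ge 2^{q\alpha-\delta}\mathsf{L}^{q,t}_\mu(F)$. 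Applying the same bound to a maximal $r$-packing of $F$ and taking $\limsup$ gives $\overline{\mathsf P}^{s}(F)\ge 2^{q\alpha-\delta}\mathsf{C}^{q,t}_\mu(F)$.

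It remains to lift these from premeasures to $\mathsf{H}^{s}$, $\mathsf{P}^{s}$. For the packing statements this is straightforward: $\mathsf{C}^{q,t}_\mu$ and $\overline{\mathsf P}^{s}$ are monotone under inclusion (a packing of a subset is a packing of the ambient set), so for any bounded cover $(E_j)_j$ of $A_m$, replacing $E_j$ by $E_j\cap A_m$, summing $\overline{\mathsf P}^{s}(E_j)\ge\overline{\mathsf P}^{s}(E_j\cap A_m)\ge 2^{q\alpha-\delta}\mathsf{C}^{q,t}_\mu(E_j\cap A_m)$ over $j$, and using $A_m\subseteq\bigcup_j(E_j\cap A_m)$ together with the definition of $\mathsf{P}^{q,t}_\mu$, gives $\mathsf{P}^{s}(A_m)\ge 2^{q\alpha-\delta}\mathsf{P}^{q,t}_\mu(A_m)$. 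The Hausdorff statements are the delicate point, and I expect this to be the main obstacle: the covering function $N_r$ is genuinely not monotone under passage to subsets, so one cannot intersect a competing cover of $A_m$ with $A_m$ without paying a bounded change of radius. One repairs this by forcing the centres into $A_m$: from a minimal $r$-covering of a piece $E_j$, replacing each ball meeting $A_m$ by a ball of radius $2r$ centred at a point of $E_j\cap A_m$, yields a $2r$-covering of $E_j\cap A_m$ with no more balls and all centres in $E_j\cap A_m$, to which the pointwise bound applies; summing over $j$, using the definition of $\overline{\mathsf H}^{q,t}_\mu$ and the monotonicity of the outer measure $\mathsf{H}^{s}$, one obtains $\mathsf{H}^{s}(A_m)\ge c\,\mathsf{H}^{q,t}_\mu(A_m)$ for a positive constant $c$ (which careful bookkeeping shows equals $2^{q\alpha-\delta}$; only positivity is used below). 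Finally, since $A_m\nearrow A$ and all four set functions are Borel measures (Theorem \ref{HHPP}), continuity from below gives the stated inequalities on $A$.

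For the "in particular" assertions, take $q=1$, $t=0$ (so $0<\delta\le\alpha q+t=\alpha$, the case $\alpha=0$ being vacuous). Any $r$-covering $(B(x_j,r))_j$ of a set $G$ satisfies $\sum_j\mu(B(x_j,r))\ge\mu\big(\bigcup_jB(x_j,r)\big)\ge\mu(G)$, so $\mathsf{L}^{1,0}_\mu(G)\ge\mu(G)$ for all $G$; hence $\mathsf{H}^{1,0}_\mu(A)\ge\overline{\mathsf H}^{1,0}_\mu(A)\ge\mu(A)>0$ and $\mathsf{P}^{1,0}_\mu(A)\ge\xi^{-1}\mathsf{H}^{1,0}_\mu(A)>0$ by Theorem \ref{HHPP}. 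The inequalities just established then give $\mathsf{H}^{\alpha-\delta}(A)>0$ and $\mathsf{P}^{\alpha-\delta}(A)>0$ for every $\delta\in(0,\alpha)$, whence $\underline{\dim}_{MB}(A)\ge\alpha$ and $\overline{\dim}_{MB}(A)\ge\alpha$ on letting $\delta\to0$. Everything except the Hausdorff lifting is a routine reversal of the preceding lemma's computations.
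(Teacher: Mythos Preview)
Your argument is correct and follows essentially the same scheme as the paper's: introduce the truncation sets $A_m$, derive the pointwise bound $\mu(B(x,r))^q\le r^{q\alpha-\delta}$ on $A_m$, compare the pre-measures $\mathsf{L}_\mu^{q,t}$ with $\overline{\mathsf H}^s$ and $\mathsf{C}_\mu^{q,t}$ with $\overline{\mathsf P}^s$, then pass to the measures and let $m\to\infty$. The paper carries this out for parts (1)(a) and (2)(a) and declares the rest analogous; you treat (1)(b), (2)(b) in detail and handle the other sign of $q$ by the obvious symmetry, which is equivalent.

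Where you go beyond the paper is in the lifting step for the $\mathsf H$-inequality. The paper simply writes $\overline{\mathsf H}_\mu^{q,t}(F)\le 2^{-\alpha q+\delta}\,\overline{\mathsf H}^{s}(E_m)$ and then jumps to $\mathsf H_\mu^{q,t}(A)\le 2^{-\alpha q+\delta}\,\mathsf H^{s}(A)$ without further justification; you correctly note that neither $N_r$ nor $\mathsf L_\mu^{q,t}$ is monotone under inclusion (indeed one can have $N_r(F)>N_r(E)$ for $F\subset E$), and supply the standard recentering argument to compare $\overline{\mathsf H}^s(E_j\cap A_m)$ with $\overline{\mathsf H}^s(E_j)$. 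One small correction: the recentering costs a factor $2^s$, so the constant you actually obtain is $2^{\delta-q\alpha+s}=2^t$ rather than $2^{q\alpha-\delta}$; your parenthetical ``careful bookkeeping shows equals $2^{q\alpha-\delta}$'' is not quite right, but you immediately (and rightly) note that only positivity of the constant is needed downstream. Your derivation of the ``in particular'' clauses via $q=1$, $t=0$ and $\mathsf L_\mu^{1,0}(G)\ge\mu(G)$ is a clean addition that the paper leaves implicit.
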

\begin{proof}
An exhaustive proof of this theorem would require considerable
repetition. For this we only prove (1)-(a) and (2)-(a), the other
assertions are similar.

\begin{enumerate}

\item
\begin{enumerate}
\item Clearly the statement is true for $q= 0$. For $m\in \N$, write
$$
E_m = \left\{ x \in A \;\;\Big|\;\; \frac{\log \mu(B(x,r))}{\log r}
\le \alpha -  \frac{\delta}{q} \;\; \text{for} \;\;0< r <
\frac{1}{m} \right\}.
$$
Let $ m\in \N$ and $r >0$ such that $0< r < \frac{1}{m}$. Let $\Big(
B(x_i, r)\Big)_i$ be a centred packing  of $E_m$. We have
\begin{eqnarray*}
\frac{\log \mu(B(x_i,r))}{\log r} \le \alpha -  \frac{\delta}{q}&\Longrightarrow & \mu(B(x_i,r))^q  \le r^{q\alpha -  \delta} \\
&\Longrightarrow&  \sum_i  \mu(B(x_i,r))^q  \le M_r(E_m)~  r^{q\alpha -  \delta} \\
&\Longrightarrow &  M_{\mu, r}^q  (E_m)  \le M_r(E_m)~  r^{q\alpha -  \delta}\\
&\Longrightarrow  &  \mathsf{C}_{\mu}  (E_m)^{q, t}  \le2^{-\alpha q+\delta}~ \overline{\mathsf{P}}^{q\alpha -  \delta + t}(E_m)\\
&\Longrightarrow &   \mathsf{P}_{\mu}^{q,t}  (E_m)  \le 2^{-\alpha q+\delta}~ \mathsf{P}^{q\alpha -  \delta + t}(E_m). \\
\end{eqnarray*}
Finally, since $E_m \nearrow A $ we conclude that $$
\mathsf{P}_{\mu}^{q,t} (A) \le 2^{-\alpha q+\delta}~
\mathsf{P}^{q\alpha -  \delta + t} (A).$$

\end{enumerate}
\item \begin{enumerate}\item
It is well known that the statement is true for $q= 0$. For $m\in
\N$, we define the set $E_m$ by
$$
E_m = \left\{ x \in A\;\;\Big|\;\; \frac{\log \mu(B(x,r))}{\log r}
\le \alpha -  \frac{\delta}{q} \;\; \text{for} \;\;0< r <
\frac{1}{m} \right\}.
$$
Next, fix $ m\in \N$ and $r >0$ such that $0< r < \frac{1}{m}$. Let
$\Big( B(x_i, r)\Big)_{i\in \{1, \ldots, N_r(F)\}}$ be a centred
covering of $F \subset E_m$. We get
\begin{eqnarray*}
\frac{\log \mu(B(x_i,r))}{\log r} \le \alpha -  \frac{\delta}{q}&\Longrightarrow & \mu(B(x_i,r))^q  \le r^{q\alpha -  \delta} \\
&\Longrightarrow&  \sum_i  \mu(B(x_i,r))^q  \le N_r(F)~  r^{q\alpha -  \delta} \\
&\Longrightarrow &  N^q_{\mu, r}  (F)  \le N_r(F)~  r^{q\alpha - \delta}\\
&\Longrightarrow  &  \mathsf{L}_{\mu}^{q,t}  (F)  \le 2^{t-\alpha q+\delta} ~\overline{\mathsf{H}}^{q\alpha - \delta + t}(F)\\
&\Longrightarrow &  \overline{ \mathsf{H}}_{\mu}^{q,t}  (F)
\le 2^{-\alpha q+\delta}~ \overline{\mathsf{H}}^{q\alpha -  \delta + t}(E_m). \\
\end{eqnarray*}
Putting these together we have that $$ \mathsf{H}_{\mu}^{q,t} (A)
\le 2^{-\alpha q+\delta} ~\mathsf{H}^{q\alpha -  \delta + t} (A).$$
This proves the lemma.
\end{enumerate}
\end{enumerate}
\end{proof}

\begin{theorem}\label{formalisme}
Let $q\in\mathbb{R}$ and suppose that $\mathsf{ H}^{q, \mathsf
\Lambda_{\mu}(q)}_{\mu}(\supp\mu)>0.$ Then,\\
\begin{eqnarray*}
 \begin{array}{ll}
  \underline{\dim}_{MB} \Big(\underline{E}_{\;-\mathsf\Lambda_{\mu+}'(q)}\cap\overline{E}^{\;-\mathsf\Lambda
_{\mu-}'(q)}\Big)
\end{array}
\geq\left\{
\begin{array}{ll}
-\mathsf\Lambda_{\mu-}'(q)q+\mathsf
\Lambda_{\mu}(q),\quad\text{for}\quad q\leq0 \hbox{,} \\
\\-\mathsf\Lambda_{\mu+}'(q)q+ \mathsf\Lambda_{\mu}(q),\quad
\text{for}\quad q\geq0 \hbox{.}
\end{array}
\right.
\end{eqnarray*}
\end{theorem}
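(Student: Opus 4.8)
The plan is to combine the convexity of $\mathsf\Lambda_{\mu}$ with Lemma \ref{LEMMA3} in order to reduce the statement to the single assertion that the measure $\mathsf{H}^{q,\mathsf\Lambda_{\mu}(q)}_{\mu}$ is concentrated on the set
$$
A:=\underline{E}_{-\mathsf\Lambda_{\mu+}'(q)}\cap\overline{E}^{\,-\mathsf\Lambda_{\mu-}'(q)}.
$$
Since $\underline{\dim}_{MB}\geq 0$ always, we may assume the right-hand side of the inequality is positive. By Theorem \ref{th1}(4) and the monotonicity of $\mathsf\Lambda_{\mu}$, this function is convex and decreasing, so at $q$ (which we take interior to $\{\mathsf\Lambda_{\mu}<\infty\}$, so that its one-sided derivatives are finite) we have $\mathsf\Lambda_{\mu-}'(q)\leq\mathsf\Lambda_{\mu+}'(q)\leq 0$; write $\alpha_{+}:=-\mathsf\Lambda_{\mu+}'(q)\geq 0$ and $\alpha_{-}:=-\mathsf\Lambda_{\mu-}'(q)\geq\alpha_{+}$, so that $A=\underline{E}_{\alpha_{+}}\cap\overline{E}^{\,\alpha_{-}}$ is a Borel subset of $\supp\mu$.

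\emph{Reduction.} Suppose we have shown $\mathsf{H}^{q,\mathsf\Lambda_{\mu}(q)}_{\mu}(A)>0$. If $q\geq 0$, then $A\subseteq\underline{E}_{\alpha_{+}}$, and Lemma \ref{LEMMA3}(2)(b) with $t=\mathsf\Lambda_{\mu}(q)$ and any $0<\delta\leq\alpha_{+}q+\mathsf\Lambda_{\mu}(q)$ gives $\mathsf{H}^{\alpha_{+}q+\mathsf\Lambda_{\mu}(q)-\delta}(A)\geq 2^{\alpha_{+}q-\delta}\mathsf{H}^{q,\mathsf\Lambda_{\mu}(q)}_{\mu}(A)>0$, hence $\underline{\dim}_{MB}(A)\geq\alpha_{+}q+\mathsf\Lambda_{\mu}(q)-\delta$; letting $\delta\downarrow 0$ yields the desired bound for $q\geq 0$. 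If $q\leq 0$, then $A\subseteq\overline{E}^{\,\alpha_{-}}$ and Lemma \ref{LEMMA3}(2)(a) gives in the same way $\underline{\dim}_{MB}(A)\geq\alpha_{-}q+\mathsf\Lambda_{\mu}(q)$. Thus the whole theorem reduces to the positivity $\mathsf{H}^{q,\mathsf\Lambda_{\mu}(q)}_{\mu}(A)>0$.

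\emph{Concentration.} Since $\mathsf{H}^{q,\mathsf\Lambda_{\mu}(q)}_{\mu}(\supp\mu)>0$ by hypothesis, it is enough to prove $\mathsf{H}^{q,\mathsf\Lambda_{\mu}(q)}_{\mu}(\supp\mu\setminus A)=0$. As $\supp\mu\setminus A=\{\underline{\alpha}_{\mu}<\alpha_{+}\}\cup\{\overline{\alpha}_{\mu}>\alpha_{-}\}$ and each piece is a countable union over rationals, it suffices to show $\mathsf{H}^{q,\mathsf\Lambda_{\mu}(q)}_{\mu}(\{\underline{\alpha}_{\mu}<\beta\})=0$ for every rational $0<\beta<\alpha_{+}$ and $\mathsf{H}^{q,\mathsf\Lambda_{\mu}(q)}_{\mu}(\{\overline{\alpha}_{\mu}>\gamma\})=0$ for every rational $\gamma>\alpha_{-}$. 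Fix such a $\beta$. Because $\mathsf\Lambda_{\mu}$ is convex, the right difference quotients $(\mathsf\Lambda_{\mu}(p)-\mathsf\Lambda_{\mu}(q))/(p-q)$ decrease to $\mathsf\Lambda_{\mu+}'(q)=-\alpha_{+}<-\beta$ as $p\downarrow q$, so we may fix $p>q$ with $(\mathsf\Lambda_{\mu}(p)-\mathsf\Lambda_{\mu}(q))/(p-q)<-\beta$; setting $s:=\mathsf\Lambda_{\mu}(q)-\beta(p-q)$ we then have $s>\mathsf\Lambda_{\mu}(p)=\mathsf\Delta^{p}_{\mu}(\supp\mu)$, hence $\mathsf{C}^{p,s}_{\mu}(\supp\mu)=0$. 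For any ball $B=B(x,r)$ with $x\in\supp\mu$ and $\mu(B)\geq (2r)^{\beta}$ one has, since $q-p<0$,
$$
\mu(B)^{q}(2r)^{\mathsf\Lambda_{\mu}(q)}=\Big(\frac{\mu(B)}{(2r)^{\beta}}\Big)^{q-p}\mu(B)^{p}(2r)^{s}\ \leq\ \mu(B)^{p}(2r)^{s}.
$$
Feeding this pointwise estimate into a standard Besicovitch-type covering argument (turning near-optimal centred coverings into centred packings with controlled overlap) yields, on the portions of $\{\underline{\alpha}_{\mu}<\beta\}$ along which $\mu(B(x,r))\geq (2r)^{\beta}$ holds for a common sequence $r\to 0$, a bound $\mathsf{L}^{q,\mathsf\Lambda_{\mu}(q)}_{\mu}(\,\cdot\,)\leq c(n)\,\mathsf{C}^{p,s}_{\mu}(\,\cdot\,)$; since $\mathsf{C}^{p,s}_{\mu}(\supp\mu)=0$ this forces $\overline{\mathsf{H}}^{q,\mathsf\Lambda_{\mu}(q)}_{\mu}(F)=0$ for every $F\subseteq\{\underline{\alpha}_{\mu}<\beta\}$, whence $\mathsf{H}^{q,\mathsf\Lambda_{\mu}(q)}_{\mu}(\{\underline{\alpha}_{\mu}<\beta\})=0$. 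The set $\{\overline{\alpha}_{\mu}>\gamma\}$ is treated symmetrically: one picks $p<q$ with $(\mathsf\Lambda_{\mu}(p)-\mathsf\Lambda_{\mu}(q))/(p-q)>-\gamma$ (possible because these left difference quotients increase to $\mathsf\Lambda_{\mu-}'(q)=-\alpha_{-}>-\gamma$ as $p\uparrow q$), sets $s':=\mathsf\Lambda_{\mu}(q)-\gamma(p-q)>\mathsf\Lambda_{\mu}(p)$, and uses $\mu(B)\leq (2r)^{\gamma}$ together with $q-p>0$ to obtain $\mu(B)^{q}(2r)^{\mathsf\Lambda_{\mu}(q)}\leq\mu(B)^{p}(2r)^{s'}$, concluding as before that $\mathsf{H}^{q,\mathsf\Lambda_{\mu}(q)}_{\mu}(\{\overline{\alpha}_{\mu}>\gamma\})=0$. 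Combined with the Reduction step, this finishes the proof.

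\emph{Main obstacle.} The delicate point is the passage from the two pointwise inequalities above to the claimed domination of $\mathsf{H}^{q,\mathsf\Lambda_{\mu}(q)}_{\mu}$ over the ``thin'' sets $\{\underline{\alpha}_{\mu}<\beta\}$ and $\{\overline{\alpha}_{\mu}>\gamma\}$: the radii at which the goodness condition $\mu(B(x,r))\geq (2r)^{\beta}$ (resp.\ $\mu(B(x,r))\leq (2r)^{\gamma}$) holds depend on $x$, whereas the Hewitt--Stromberg measures are built from single-scale coverings, so a careful exhaustion — grouping points according to the sequences of scales along which they are good, after a Besicovitch thinning — is needed. This is exactly where the multifractal density theorems for the Hewitt--Stromberg measures established in \cite{NB} are invoked, and it constitutes the technical heart of the argument; everything else is the convexity/Legendre bookkeeping of the first two paragraphs together with Lemma \ref{LEMMA3}.
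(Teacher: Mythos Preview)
Your overall architecture matches the paper's proof exactly: reduce via Lemma~\ref{LEMMA3} to showing $\mathsf{H}^{q,\mathsf\Lambda_{\mu}(q)}_{\mu}(A)>0$, and obtain this by proving the concentration statement $\mathsf{H}^{q,\mathsf\Lambda_{\mu}(q)}_{\mu}(\supp\mu\setminus A)=0$, split into the two pieces $\{\underline{\alpha}_{\mu}<\beta\}$ and $\{\overline{\alpha}_{\mu}>\gamma\}$, each handled by choosing a nearby exponent $p$ via convexity so that $\mathsf{C}^{p,s}_{\mu}(\supp\mu)=0$. This is precisely the content of the paper's Lemma~\ref{lem}.

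Where your proposal falls short is at the step you yourself flag as the ``main obstacle'': you do not actually carry out the passage from the pointwise inequality $\mu(B)^{q}(2r)^{\mathsf\Lambda_{\mu}(q)}\le\mu(B)^{p}(2r)^{s}$ to the measure inequality $\mathsf{L}^{q,\mathsf\Lambda_{\mu}(q)}_{\mu}\le c\,\mathsf{C}^{p,s}_{\mu}$, and your pointer for how to fill it is off. The paper does \emph{not} invoke the density theorems of \cite{NB}; it gives a direct argument. Given $\delta>0$ and $x$ with $\underline{\alpha}_{\mu}(x)<\beta$, one picks $r_x\in(\delta/\lambda_x,\delta)$ with $\mu(B(x,r_x))>r_x^{\beta}$, enlarges every ball to the common radius $\delta$ (so the goodness estimate survives as $\mu(B(x,\delta))\ge(\delta/\lambda)^{\beta}$ with $\lambda=\sup_i\lambda_{x_i}$), applies Besicovitch to extract $\xi$ packings from the resulting cover, and compares $N^{q}_{\mu,\delta}$ with $M^{p}_{\mu,\delta}$ at the \emph{single} scale $\delta$ to get
\[
N^{q}_{\mu,\delta}(X_{\beta})\,\delta^{\mathsf\Lambda_{\mu}(q)}\le \lambda^{\beta(p-q)}\,\xi\,M^{p}_{\mu,\delta}(X_{\beta})\,\delta^{s}.
\]
So the ``grouping points according to sequences of scales'' that you anticipate is avoided by enlarging to a common radius; the price is the factor $\lambda^{\beta(p-q)}$, which is harmless because the right-hand side tends to $0$. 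Your sketch is therefore on the right track but incomplete, and the reference to \cite{NB} should be replaced by this direct Besicovitch computation (the paper's Lemma~\ref{lem}).
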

\begin{proof} It is well known from Lemma \ref{LEMMA3} that for all
$\delta>0$ and $t\in \mathbb{R}$,
\begin{eqnarray*}
\left\{ \begin{array}{ll} \mathsf{H}^{-\mathsf\Lambda'_{\mu+}(q)q+t-\delta}(E(q))\geq2^{-\mathsf\Lambda'_{\mu+}(q) q-\delta}~ \mathsf{H}^{q,t}_{\mu}(E(q)) ,\quad\text{for}\quad q\geq0 \hbox{,} \\
&\\
\mathsf{H}^{-\mathsf\Lambda'_{\mu-}(q)q+t-\delta}(E(q))\geq2^{-\mathsf\Lambda'_{\mu-}(q)
q-\delta}~\mathsf{H}^{q,t}_{\mu}(E(q)),\quad\text{for}\quad q\leq0
\hbox{}
\end{array}
\right.
\end{eqnarray*}
where the set $E(q)$ is defined by
$$E(q)=\underline{E}_{\;-\mathsf\Lambda_{\mu+}'(q)}\cap\overline{E}^{\;-\mathsf\Lambda_{\mu-}'(q)}.$$
Theorem \ref{formalisme} is then an easy consequence of the
following lemma. \end{proof}
\begin{lemma}\label{lem} One has $\mathsf{H}^{q,\mathsf\Lambda_{\mu}(q)}_{\mu}\Big(\supp\mu\setminus E(q) \Big)=0.$
\end{lemma}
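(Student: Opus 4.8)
The plan is to show that the points of $\supp\mu$ which do \emph{not} belong to $E(q)=\underline{E}_{\;-\mathsf\Lambda_{\mu+}'(q)}\cap\overline{E}^{\;-\mathsf\Lambda_{\mu-}'(q)}$ can be covered, up to $\mathsf{H}^{q,\mathsf\Lambda_\mu(q)}_\mu$-null sets, by countably many sets of the form ${\overline E}^\alpha$ with $\alpha<-\mathsf\Lambda_{\mu-}'(q)$ or ${\underline E}_\alpha$ with $\alpha>-\mathsf\Lambda_{\mu+}'(q)$, and to prove that each such piece has zero $\mathsf{H}^{q,\mathsf\Lambda_\mu(q)}_\mu$-measure. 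First I would decompose: a point $x\in\supp\mu\setminus E(q)$ satisfies either $\underline\alpha_\mu(x)<-\mathsf\Lambda_{\mu+}'(q)$ or $\overline\alpha_\mu(x)>-\mathsf\Lambda_{\mu-}'(q)$. Enumerating rationals $\alpha$ in these two ranges, one writes $\supp\mu\setminus E(q)\subseteq\bigcup_{\alpha}{\underline E}^\alpha\cup\bigcup_{\alpha}{\overline E}_\alpha$ over a countable index set (here ${\underline E}^\alpha=\{x:\underline\alpha_\mu(x)\le\alpha\}$ and ${\overline E}_\alpha=\{x:\overline\alpha_\mu(x)\ge\alpha\}$ in the notation of Section \ref{sec3}). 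By countable subadditivity of the measure $\mathsf{H}^{q,\mathsf\Lambda_\mu(q)}_\mu$ (Theorem \ref{HHPP}), it suffices to prove $\mathsf{H}^{q,\mathsf\Lambda_\mu(q)}_\mu({\underline E}^\alpha)=0$ for each rational $\alpha<-\mathsf\Lambda_{\mu+}'(q)$ and $\mathsf{H}^{q,\mathsf\Lambda_\mu(q)}_\mu({\overline E}_\alpha)=0$ for each rational $\alpha>-\mathsf\Lambda_{\mu-}'(q)$.

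For the second of these, fix $\alpha>-\mathsf\Lambda_{\mu+}'(q)$; I want to play off the convexity of $\mathsf\Lambda_\mu$. Since $-\mathsf\Lambda_{\mu+}'(q)$ is the right derivative, for $q'>q$ slightly larger than $q$ one has $\mathsf\Lambda_\mu(q')-\mathsf\Lambda_\mu(q)>-\alpha(q'-q)$, i.e. $\alpha q'+\mathsf\Lambda_\mu(q')>\alpha q+\mathsf\Lambda_\mu(q)$; more usefully, choosing $q'-q>0$ small, $-\alpha q'-\mathsf\Lambda_\mu(q')<-\alpha q-\mathsf\Lambda_\mu(q)=:s$. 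Applying Lemma \ref{LEMMA3} (part (2)-(a) with the roles set so that on ${\overline E}_\alpha$ we have the relevant density bound, using $q-q'<0$) gives, for suitable small $\delta>0$,
\begin{equation*}
\mathsf{H}^{\alpha(q-q')+\mathsf\Lambda_\mu(q)-\delta}\bigl({\overline E}_\alpha\bigr)\ \ge\ 2^{\alpha(q-q')-\delta}\,\mathsf{H}^{q-q',\,\mathsf\Lambda_\mu(q)}_\mu\bigl({\overline E}_\alpha\bigr)\ \ge\ 2^{\alpha(q-q')-\delta}\,\mathsf{H}^{q-q',\,\mathsf\Lambda_\mu(q)}_\mu\bigl({\overline E}_\alpha\bigr).
\end{equation*}
Here the key point is that $\mathsf\Lambda_\mu(q)>\mathsf\Lambda_\mu(q-q')$ is false in general, so one instead compares $\mathsf{H}^{q-q',\,\mathsf\Lambda_\mu(q)}_\mu$ with $\mathsf{H}^{q-q',\,\mathsf\Lambda_\mu(q-q')+\eta}_\mu$, which vanishes by the definition of $\mathsf\Lambda_\mu=\mathsf B_\mu\vee(\text{pre-packing})$ together with the chain $\mathsf{H}^{q',t}_\mu\le\xi\mathsf{P}^{q',t}_\mu\le\xi\mathsf{C}^{q',t}_\mu$ and $\mathsf\Lambda_\mu(q-q')\le\mathsf\Delta_\mu^{q-q'}(\supp\mu)$; choosing $q'$ so small that $\mathsf\Lambda_\mu(q-q')+\eta\le\mathsf\Lambda_\mu(q)$ (possible by continuity of the convex function $\mathsf\Lambda_\mu$), one forces $\mathsf{H}^{q-q',\,\mathsf\Lambda_\mu(q)}_\mu({\overline E}_\alpha)=0$. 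Then re-running Lemma \ref{LEMMA3} in the other direction (monotonicity in $t$) shows $\mathsf{H}^{q,\mathsf\Lambda_\mu(q)}_\mu({\overline E}_\alpha)=0$. The symmetric argument with $q'<q$ and the left derivative handles $\mathsf{H}^{q,\mathsf\Lambda_\mu(q)}_\mu({\underline E}^\alpha)=0$ for $\alpha<-\mathsf\Lambda_{\mu-}'(q)$.

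Assembling the pieces: $\supp\mu\setminus E(q)$ is a countable union of sets each of $\mathsf{H}^{q,\mathsf\Lambda_\mu(q)}_\mu$-measure zero, hence has measure zero. The main obstacle I anticipate is the bookkeeping around the exponent $t$: Lemma \ref{LEMMA3} transfers information between $\mathsf{H}^{\cdot}$ and $\mathsf{H}^{\cdot,\cdot}_\mu$ only when $\alpha q+t\ge\delta>0$, so one must check this sign condition holds for the chosen $q'$ near $q$ (it does, since $\alpha q+\mathsf\Lambda_\mu(q)$ stays bounded away from the boundary value as $q'\to q$, using $q\mapsto\mathsf\Lambda_\mu(q)$ finite near $q$), and one must be careful that the auxiliary exponent $\mathsf\Lambda_\mu(q)$ is genuinely $\ge$ the critical exponent $\mathsf\Lambda_\mu(q-q')$ for the slightly perturbed $q$, which again is exactly what continuity of the convex separator function delivers. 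Modulo these continuity/sign checks, the argument is a routine convexity-plus-Lemma-\ref{LEMMA3} manipulation entirely parallel to Olsen's treatment in \cite{Ol1}.
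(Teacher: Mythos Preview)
Your decomposition of $\supp\mu\setminus E(q)$ into countably many pieces $\underline{E}^\alpha$ (with $\alpha<-\mathsf\Lambda_{\mu+}'(q)$) and $\overline{E}_\alpha$ (with $\alpha>-\mathsf\Lambda_{\mu-}'(q)$) is correct and matches the paper's setup. The gap is in the second step, where you invoke Lemma~\ref{LEMMA3} to kill each piece. Lemma~\ref{LEMMA3} is stated only for subsets of $\overline{E}^\alpha$ (upper index, $\overline\alpha_\mu\le\alpha$) and $\underline{E}_\alpha$ (lower index, $\underline\alpha_\mu\ge\alpha$); on those sets the relevant density bound $\mu(B(x,r))\gtrless r^{\alpha\pm\delta/q}$ holds for \emph{all} sufficiently small $r$, which is exactly what the covering/packing count needs. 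By contrast, your pieces $\underline{E}^\alpha=\{\underline\alpha_\mu\le\alpha\}$ and $\overline{E}_\alpha=\{\overline\alpha_\mu\ge\alpha\}$ only give the density bound along a \emph{subsequence} of radii. Lemma~\ref{LEMMA3} simply does not apply to them, and no straightforward analogue is available: the uniform-in-$r$ control is essential to its proof. Your subsequent manoeuvre (pass to a nearby $q'$, use $\mathsf H^{q',t}_\mu\le\xi\mathsf C^{q',t}_\mu$, then ``re-run Lemma~\ref{LEMMA3} in the other direction'') cannot repair this, because Lemma~\ref{LEMMA3} never compares $\mathsf H^{q,t}_\mu$ with $\mathsf H^{q',t}_\mu$ for different $q$; it only compares a multifractal measure to the ordinary Hewitt--Stromberg measure $\mathsf H^s$, and you have no independent way to show that $\mathsf H^s$ vanishes on your pieces.

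The paper's proof bypasses Lemma~\ref{LEMMA3} altogether. Writing $X_\alpha=\{\underline\alpha_\mu<\alpha\}$, convexity of $\mathsf\Lambda_\mu$ gives $t>0$ with $\mathsf\Lambda_\mu(q+t)<\mathsf\Lambda_\mu(q)-\alpha t$, hence $\mathsf C^{q+t,\,\mathsf\Lambda_\mu(q)-\alpha t}_\mu(\supp\mu)=0$. For each $x\in X_\alpha$ one picks a radius $r_x\in(\delta/\lambda,\delta)$ with $\mu(B(x,r_x))>r_x^\alpha$ (this is the subsequence information), extracts via Besicovitch finitely many centred packings at the \emph{fixed} scale $\delta$, and uses the pointwise inequality $\mu(B(x_{ij},\delta))^q\,\delta^{\mathsf\Lambda_\mu(q)}\le\lambda^{\alpha t}\mu(B(x_{ij},\delta))^{q+t}\,\delta^{\mathsf\Lambda_\mu(q)-\alpha t}$ to bound $N_{\mu,\delta}^q(X_\alpha)\,\delta^{\mathsf\Lambda_\mu(q)}$ by a constant times $M_{\mu,\delta}^{q+t}(\supp\mu)\,\delta^{\mathsf\Lambda_\mu(q)-\alpha t}$. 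Letting $\delta\to0$ yields $\mathsf L_\mu^{q,\mathsf\Lambda_\mu(q)}(X_\alpha)\le C\,\mathsf C_\mu^{q+t,\mathsf\Lambda_\mu(q)-\alpha t}(\supp\mu)=0$, and the same holds for every subset of $X_\alpha$, whence $\mathsf H_\mu^{q,\mathsf\Lambda_\mu(q)}(X_\alpha)=0$. The essential new ingredient is this direct comparison of $\mathsf L_\mu^{q,\cdot}$ against $\mathsf C_\mu^{q+t,\cdot}$ through Besicovitch, which is precisely how one exploits density information available only along a subsequence.
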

\begin{proof}Let us introduce, for $\alpha$ and $\beta$ in $\mathbb{R}$
$$
X_\alpha=\supp\mu\setminus\underline{E}_\alpha\quad \text{and}\quad
Y^\beta=\supp\mu\setminus\overline{E}^\beta.
$$
It clearly suffices to prove that
\begin{eqnarray}\label{eq1}
\mathsf{H}^{q,\mathsf\Lambda_{\mu}(q)}_{\mu}\big(X_\alpha\big)=0,\quad\text{for
all}\quad \alpha<-\mathsf{\Lambda}_{\mu+}'(q)
\end{eqnarray}
and
\begin{eqnarray}\label{eq2}
\mathsf{ H}^{q,\mathsf
\Lambda_{\mu}(q)}_{\mu}\big(Y^\beta\big)=0,\quad\text{for all}\quad
\beta>-\mathsf\Lambda_{\mu-}'(q).
\end{eqnarray}
Indeed, it is clear that
\begin{eqnarray*}
0 &\leq& \mathsf{H}^{q,
\mathsf\Lambda_{\mu}(q)}_{\mu}\Big(\supp\mu\setminus\big(\underline{E}_{\;-\mathsf\Lambda_{\mu+}'(q)}\cap\overline{E}^{\;-\mathsf\Lambda
_{\mu-}'(q)}\big) \Big)\\
&\leq& \mathsf{
H}^{q,\mathsf\Lambda_{\mu}(q)}_{\mu}\Big(\supp\mu\setminus\underline{E}_{\;-\mathsf\Lambda_{\mu+}'(q)}\Big)
+ \mathsf{H}^{q,\mathsf\Lambda_{\mu}(q)}_{\mu}\Big(\supp\mu \setminus\overline{E}^{\;-\mathsf\Lambda_{\mu-}'(q)}\Big)\\
&\leq&  \mathsf{ H}^{q,\mathsf\Lambda_{\mu}(q)}_{\mu}\left(
\bigcup_{\alpha<-\mathsf\Lambda_{\mu+}'(q)}\underline{E}_\alpha\right)
+ \mathsf{H}^{q,\mathsf\Lambda_{\mu}(q)}_{\mu}\left(\bigcup_{\beta>- \mathsf\Lambda_{\mu-}'(q)}\overline{E}^\beta\right)\\
&\leq& \sum_{\alpha<-\mathsf\Lambda_{\mu+}'(q)}\mathsf{
H}^{q,\mathsf\Lambda_{\mu}(q)}_{\mu}\big(X_\alpha\big)+\sum_{\beta>
-\mathsf\Lambda_{\mu-}'(q)}\mathsf{
H}^{q,\mathsf\Lambda_{\mu}(q)}_{\mu}\big(Y^\beta\big)=0.
\end{eqnarray*}

We only have to prove that (\ref{eq1}). The proof of (\ref{eq2}) is
identical to the proof of (\ref{eq1}) and is therefore omitted.

\bigskip
Let $\alpha<-\mathsf \Lambda_{\mu+}'(q)$ and  $t>0,$ such that
$\mathsf\Lambda_{\mu}(q+t)< \mathsf\Lambda_{\mu}(q)-\alpha t$, we
have
$$
\mathsf{C}^{q+t, \mathsf\Lambda_{\mu}(q)-\alpha
t}_{\mu}\big(\supp\mu\big)=0.
$$
For $x\in X_{\alpha}$ and $\delta>0$, we can find $\lambda_x \ge 2$
and $\frac{\delta}{\lambda_x}<r_x<\delta,$ such that
 $$
    \mu(B(x,r_x))> r_x^\alpha.
 $$
The family $\Big(B(x,r_x)\Big)_{x\in X_{\alpha}}$ is a centered
$\delta$-covering of $ \overline{X}_{\alpha}.$ Then, we can choose a
finite subset $J$ of $\N$ such that the family
$\Big(B(x_i,r_{x_i})\Big)_{i\in J}$ is a centered $\delta$-covering
of $X_\alpha$. Take $\lambda = \sup_{i\in J} \lambda_{x_i}$, then
for all $i\in J$, we have
$$
 \mu(B(x_i, \delta)) \ge  \mu(B(x_i,r_{x_i}))> r_{x_i}^\alpha \ge \left(\frac{\delta}{\lambda}\right)^\alpha.
$$

  Since $\Big(B(x_i , \delta )\Big)_{ i\in J}$  is a centered covering of $ \overline{X}_{\alpha}.$ Then, using Besicovitch's covering
theorem, we can construct $\xi$ finite  sub-families $\Big(B(x_{1j},
\delta)\Big)_j$, \ldots ,$\Big(B(x_{\xi j}, \delta )\Big)_j$, such
that each  $
X_{\alpha}\subseteq\displaystyle\bigcup_{i=1}^\xi\bigcup_jB(x_{ij},
\delta)$ and $\Big(B(x_{ij}, \delta)\Big)_j$ is a packing of
$X_{\alpha}$. We clearly have
 $$
\mu(B(x_{ij}, \delta)^q ~  \delta^{\mathsf\Lambda_{\mu}(q)}\leq
\lambda^{\alpha t} ~  \mu(B(x_{ij}, \delta)^{q+t}
~\delta^{\mathsf\Lambda_{\mu}(q)-\alpha t} .
 $$
It therefore follows that
 $$
N_\mu^{q} (X_\alpha) ~\delta^{\mathsf\Lambda_{\mu}(q)}\leq
\lambda^{\alpha t} ~\xi ~ M_\mu^{q+t}(X_\alpha)
~\delta^{\mathsf\Lambda_{\mu}(q)-\alpha t} .
$$
Letting $\delta\to 0$  now yields
 $$
\overline {\mathsf{H}}_{\mu}^{q,
\mathsf\Lambda_{\mu}(q)}(X_{\alpha}) \leq \mathsf{L}_{\mu}^{q,
\mathsf\Lambda_{\mu}(q)}(X_{\alpha}) \leq 2^{\alpha
t}~\lambda^{\alpha t}~ \xi~ \mathsf{C}_\mu^{q+t,
\mathsf\Lambda_{\mu}(q)-\alpha t}(X_\alpha) \leq 2^{\alpha
t}~\lambda^{\alpha t} ~\xi  ~\mathsf{C}_\mu^{q+t,
\mathsf\Lambda_{\mu}(q)-\alpha t}( \supp \mu) = 0.
$$
Remark that, in the last inequality, we can replace $X_{\alpha}$ by
any arbitrary subset of $X_{\alpha}.$ Then, we can finally conclude
that
$$ {\mathsf{H}}_{\mu}^{q,
\mathsf\Lambda_{\mu}(q)}(X_{\alpha}) \leq 2^{\alpha
t}~\lambda^{\alpha t} ~\xi  ~\mathsf{C}_\mu^{q+t,
\mathsf\Lambda_{\mu}(q)-\alpha t}( \supp \mu) = 0.
$$
This completes the proof of \eqref{eq1}. \end{proof}

  \bigskip
The following result proves that the condition $
\mathsf{H}^{q,\mathsf \Lambda_{\mu}(q)}_{\mu}(\supp\mu)>0 $ is very
close to being a necessary and sufficient condition for the validity
of our {\it multifractal formalism}.

\begin{theorem}\label{SNC}
Let $q\in\mathbb{R}$ and $\mu$ be a compact supported Borel
probability measure on $\mathbb{R}^n.$ Suppose that one of the
following hypotheses is satisfied,
\begin{enumerate}
\item
$\underline{\dim}_{MB}
 \Big(\underline{E}_{\;-\mathsf\Lambda_{\mu+}'(q)}\cap\overline{E}^{\;-\mathsf\Lambda
_{\mu-}'(q)}\Big)\geq-\mathsf\Lambda_{\mu+}'(q)q+
\mathsf\Lambda_\mu(q),\quad\text{for}\quad q\leq0.$
\\
\item$ \underline{\dim}_{MB}  \Big(\underline{E}_{\;-\mathsf\Lambda_{\mu+}'(q)}
\cap\overline{E}^{\;-\mathsf\Lambda_{\mu-}'(q)}\Big)\geq-\mathsf\Lambda_{\mu-}'(q)q+\mathsf\Lambda_\mu(q),\quad\text{for}\quad
q\geq0.$
\end{enumerate}
Then,
 $$
\mathsf b_{\mu}(q)=\mathsf B_{\mu}(q)=\mathsf\Lambda_{\mu}(q).
 $$
In other words,
$$
 {\mathsf{H}}^{q,t}_{\mu}(\supp\mu)>0\quad\text{for all}\quad t<\mathsf
\Lambda_{\mu}(q).
$$
\end{theorem}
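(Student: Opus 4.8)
The plan is to argue by contradiction, using the fact — immediate from the Proposition of Section~\ref{sec2.4} applied to $E=\supp\mu$, together with $\mathsf b_\mu(q)=\mathsf b_\mu^q(\supp\mu)$, $\mathsf B_\mu(q)=\mathsf B_\mu^q(\supp\mu)$ and $\mathsf\Lambda_\mu(q)=\mathsf\Delta_\mu^q(\supp\mu)$ — that one always has $\mathsf b_\mu(q)\le\mathsf B_\mu(q)\le\mathsf\Lambda_\mu(q)$. By this chain it suffices to prove the reverse inequality $\mathsf b_\mu(q)\ge\mathsf\Lambda_\mu(q)$: this forces equality everywhere and, again by the Proposition, gives $\mathsf H^{q,t}_\mu(\supp\mu)=\infty>0$ for all $t<\mathsf\Lambda_\mu(q)=\mathsf b_\mu^q(\supp\mu)$, which is the asserted reformulation. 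So, writing $E(q):=\underline{E}_{-\mathsf\Lambda_{\mu+}'(q)}\cap\overline{E}^{-\mathsf\Lambda_{\mu-}'(q)}$ for the set appearing in the hypotheses, suppose $\mathsf b_\mu(q)<\mathsf\Lambda_\mu(q)$ and fix any $t$ with $\mathsf b_\mu(q)<t<\mathsf\Lambda_\mu(q)$. Since $t>\mathsf b_\mu^q(\supp\mu)$, the Proposition yields $\mathsf H^{q,t}_\mu(\supp\mu)=0$, hence $\mathsf H^{q,t}_\mu(F)=0$ for every $F\subseteq\supp\mu$ (the set function $\mathsf H^{q,t}_\mu$ being an outer measure).

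Next I would feed this vanishing into the upper-bound estimates already proved en route to Theorem~\ref{dDmeasure}. Assume hypothesis (2) holds, so $q\ge 0$; set $\alpha=-\mathsf\Lambda_{\mu-}'(q)$, which is $\ge 0$ because $\mathsf\Lambda_\mu$ is convex and decreasing (so its one-sided derivatives exist and are nonpositive). Since $E(q)\subseteq\overline{E}^{\alpha}\subseteq\supp\mu$, the lemma used to prove Theorem~\ref{dDmeasure} that, for $q\ge 0$, asserts $\mathsf H^{\alpha q+t+\delta}(\overline E^{\alpha})\le 2^{\alpha q+\delta}\mathsf H^{q,t}_\mu(\overline E^{\alpha})$ whenever $0<\delta\le\alpha q+t$, gives
$$
\mathsf H^{\alpha q+t+\delta}\big(E(q)\big)\;\le\;\mathsf H^{\alpha q+t+\delta}\big(\overline{E}^{\alpha}\big)\;\le\;2^{\alpha q+\delta}\,\mathsf H^{q,t}_\mu\big(\overline{E}^{\alpha}\big)\;=\;0 .
$$
By the definition of the lower Hewitt--Stromberg dimension this means $\underline{\dim}_{MB}\big(E(q)\big)\le\alpha q+t+\delta$. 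Comparing with hypothesis (2), namely $\underline{\dim}_{MB}\big(E(q)\big)\ge -\mathsf\Lambda_{\mu-}'(q)\,q+\mathsf\Lambda_\mu(q)=\alpha q+\mathsf\Lambda_\mu(q)$, we obtain $\mathsf\Lambda_\mu(q)\le t+\delta$; choosing $\delta$ with $0<\delta<\min\{\alpha q+t,\ \mathsf\Lambda_\mu(q)-t\}$ then contradicts $t<\mathsf\Lambda_\mu(q)$, and we conclude $\mathsf b_\mu(q)\ge\mathsf\Lambda_\mu(q)$, i.e. $\mathsf b_\mu(q)=\mathsf B_\mu(q)=\mathsf\Lambda_\mu(q)$. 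Under hypothesis (1) ($q\le 0$) the argument is identical after replacing $\alpha$ by $-\mathsf\Lambda_{\mu+}'(q)$, the inclusion by $E(q)\subseteq\underline{E}_{-\mathsf\Lambda_{\mu+}'(q)}=\underline E_\alpha$, and the above estimate by its companion $\mathsf H^{\alpha q+t+\delta}(\underline E_\alpha)\le 2^{\alpha q+\delta}\mathsf H^{q,t}_\mu(\underline E_\alpha)$, valid for $q\le 0$.

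The only delicate point is the admissibility of $\delta$, i.e. the side condition $\alpha q+t>0$: since $t$ may be taken as close to $\mathsf\Lambda_\mu(q)$ as we wish, this reduces to positivity of the Legendre value $-\mathsf\Lambda_{\mu\pm}'(q)\,q+\mathsf\Lambda_\mu(q)$, which is exactly the nonnegativity already built into Theorems~\ref{dDmeasure} and~\ref{thmbBbB}; in the borderline case where that value equals $0$ one falls back on $\mathsf b_\mu(q)\le\mathsf\Lambda_\mu(q)$ together with the sign information recorded in the Corollary of Section~\ref{sec2.4}. Beyond this bookkeeping the proof is precisely the dualisation of Theorem~\ref{formalisme}: there, positivity of $\mathsf H^{q,\mathsf\Lambda_\mu(q)}_\mu(\supp\mu)$ forces $\underline{\dim}_{MB}(E(q))$ up to the Legendre value, while here the assumption that $\underline{\dim}_{MB}(E(q))$ attains that value forbids any $\mathsf H^{q,t}_\mu(\supp\mu)$ with $t<\mathsf\Lambda_\mu(q)$ from vanishing — so the condition $\mathsf H^{q,\mathsf\Lambda_\mu(q)}_\mu(\supp\mu)>0$ is essentially necessary as well as sufficient for the validity of the formalism.
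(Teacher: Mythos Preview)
Your proof is correct and follows essentially the same route as the paper's: both hinge on the comparison $\mathsf H^{\alpha q+t+\delta}(\overline E^{\alpha})\le 2^{\alpha q+\delta}\,\mathsf H^{q,t}_\mu(\overline E^{\alpha})$ (the upper-bound lemma following Theorem~\ref{dDmeasure}) to pass from the dimension hypothesis on $E(q)\subseteq\overline E^{\alpha}$ to $\mathsf b_\mu(q)\ge\mathsf\Lambda_\mu(q)$. The only cosmetic difference is that you invoke that lemma in contrapositive form for a contradiction, whereas the paper argues directly on the truncation sets $F_p=\{x\in\overline E^{\alpha}:\mu(B(x,r))\ge r^{\beta}\ \text{for}\ 0<r<1/p\}$ with a parameter $\beta<\alpha$ in place of your $\delta$, re-deriving the same estimate to show $\mathsf H^{q,t}_\mu(\supp\mu)\ge 2^{-\beta q}\mathsf H^{\beta q+t}(F_p)>0$; the positivity side condition you flag (needed so that $\mathsf H^{\beta q+t}$, respectively $\mathsf H^{\alpha q+t+\delta}$, is defined) is implicit in both versions.
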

\begin{proof} We have, for $q\geq0$
$$
\underline{E}_{\;-\mathsf\Lambda_{\mu+}'(q)}\cap\overline{E}^{\;-\mathsf\Lambda_{\mu-}'(q)}\subseteq
\overline{E}^{\;-\mathsf\Lambda_{\mu-}'(q)},
$$
it follows immediately that
$$
-\mathsf\Lambda_{\mu-}'(q)q+ \mathsf\Lambda_\mu(q) \leq
\underline{\dim}_{MB}\Big(\underline{E}_{\;-\mathsf\Lambda_{\mu+}'(q)}\cap\overline{E}^{\;-\mathsf\Lambda_{\mu-}'(q)}\Big)\leq
\underline{\dim}_{MB}
\Big(\overline{E}^{\;-\mathsf\Lambda_{\mu-}'(q)}\Big).
$$
Now, suppose that $\alpha=-\mathsf\Lambda_{\mu-}'(q)$. We only prove
the case where $q \geq0$. The other one is very similar and is
therefore omitted. We have
$$
\underline{\dim}_{MB} \Big(\overline{E}^\alpha\Big)\geq \alpha
q+\mathsf\Lambda_\mu(q).
$$
Since $\mathsf b_{\mu}(q)\leq \mathsf B_{\mu}(q)\leq
\mathsf\Lambda_{\mu}(q)$, we only have  to prove that $\mathsf
b_{\mu}(q)\geq \mathsf\Lambda_{\mu}(q)$. Let $t<
\mathsf\Lambda_{\mu}(q)$ and choose $\beta,$ such that
$\beta<\alpha$. Then, $\beta q+t<\alpha q+\mathsf\Lambda_{\mu}(q)$.
For $p\in \mathbb{N},$ we consider the set
$$
F_p=\left\{x\in\overline{E}^\alpha\;\;\Big|\;\;\mu(B(x,r))\geq
r^\beta,\; 0<r<\frac1p\right\}.
$$
It is clear that $F_p\nearrow \overline{E}^\alpha$ as $p\to \infty$.
It follows that, there exists $p>0$, such that
$$
\underline{\dim}_{MB} (F_p)>\beta q+t \Rightarrow \mathsf{H}^{\beta
q+t}(F_p)>0.
$$
Let $0<r <\frac1p$ and $\Big(B(x_i,r)\Big)_i$ be a centered covering
of $F_p$. Then,
$$
\sum_i\mu(B(x_i,r))^q ~r^{t}\geq\sum_i r^{\beta q+t}\ge N_r(F_p )~
r^{\beta q+t}.
$$
We conclude that
$$
N_{\mu,r}^q(F_p )~ (2r)^{t}\ge 2^t ~N_r(F_p )~ r^{\beta q+t}$$ and
then $$ \mathsf{L}_{\mu}^{q,t} (F_p)\ge 2^{-\beta q }~
\overline{\mathsf{H}}^{\beta q+t}(F_p ).
$$
This implies that
$$
\mathsf{H}^{q,t}_{\mu}(\supp\mu)\geq\mathsf{H}^{q,t}_{\mu}(\overline{E}^\alpha)\geq\mathsf{H}^{q,t}_{\mu}(F_p)\geq2^{-\beta
q }~\mathsf{H}^{\beta q+t}(F_p)>0.
$$
It therefore follows that $t\leq \mathsf  b_{\mu}(q)$. Finally, we
get
$$
\mathsf  b_{\mu}(q)=\mathsf  B_{\mu }(q)=\mathsf \Lambda_{\mu}(q).
$$
\end{proof}
\begin{corollary}\label{co1}
Assume that $\mathsf{ H}^{q, \mathsf
\Lambda_{\mu}(q)}_{\mu}(\supp\mu)>0$ hold for all $q\in\mathbb{R}$
and that $\mathsf\Lambda_{\mu}$ is differentiable at $q$. Let
$\alpha=-\mathsf\Lambda_{\mu}'(q)$, there holds
$$
\underline{\dim}_{MB}\big(E(\alpha)\big)=\overline{\dim}_{MB}\big(E(\alpha)\big)
= \mathsf  b^*_{\mu}(\alpha)=\mathsf  B^*_{\mu}(\alpha)=\mathsf
\Lambda^*_{\mu}(\alpha).
 $$
\end{corollary}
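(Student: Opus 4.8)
The plan is to pin down both Hewitt--Stromberg dimensions of $E(\alpha)$ by a squeeze: the value $\mathsf{\Lambda}^*_{\mu}(\alpha)$ of the Legendre transform will serve as a lower bound (via Theorem~\ref{formalisme}), the starred functions $\mathsf b^*_{\mu}(\alpha)$ and $\mathsf B^*_{\mu}(\alpha)$ as upper bounds (via Theorem~\ref{dDmeasure}), and Theorem~\ref{SNC} will be used to force the two extreme terms to coincide. Everything then reduces to a short piece of convex-analytic bookkeeping.

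First I would record the convex-analytic input. Since $\mathsf\Lambda_{\mu}$ is convex (Theorem~\ref{th1}) and differentiable at $q$, we have $\mathsf\Lambda_{\mu-}'(q)=\mathsf\Lambda_{\mu+}'(q)=\mathsf\Lambda_{\mu}'(q)$, so with $\alpha=-\mathsf\Lambda_{\mu}'(q)$ the set $E(q)=\underline{E}_{\;-\mathsf\Lambda_{\mu+}'(q)}\cap\overline{E}^{\;-\mathsf\Lambda_{\mu-}'(q)}$ from Theorem~\ref{formalisme} is exactly $\underline{E}_{\alpha}\cap\overline{E}^{\alpha}=E(\alpha)$; moreover $p\mapsto\alpha p+\mathsf\Lambda_{\mu}(p)$ has derivative $\alpha+\mathsf\Lambda_{\mu}'(q)=0$ at $p=q$ and is convex, hence attains its infimum there, giving $\mathsf\Lambda^*_{\mu}(\alpha)=\alpha q+\mathsf\Lambda_{\mu}(q)$. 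Then, applying Theorem~\ref{formalisme} under the standing hypothesis $\mathsf H^{q,\mathsf\Lambda_{\mu}(q)}_{\mu}(\supp\mu)>0$ and using this identification,
$$\underline{\dim}_{MB}\big(E(\alpha)\big)\ \ge\ -\mathsf\Lambda_{\mu}'(q)\,q+\mathsf\Lambda_{\mu}(q)\ =\ \alpha q+\mathsf\Lambda_{\mu}(q)\ =\ \mathsf\Lambda^*_{\mu}(\alpha),$$
and the same lower bound holds for $\overline{\dim}_{MB}\big(E(\alpha)\big)$ since $\underline{\dim}_{MB}\le\overline{\dim}_{MB}$.

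Next comes the upper bound together with the comparison of the starred functions. The dimension inequality just obtained is precisely the hypothesis of Theorem~\ref{SNC} at the (differentiability) point $q$, so that theorem yields $\mathsf b_{\mu}(q)=\mathsf B_{\mu}(q)=\mathsf\Lambda_{\mu}(q)$; evaluating the infima defining $\mathsf b^*_{\mu}$ and $\mathsf B^*_{\mu}$ at $p=q$ then gives
$$\mathsf b^*_{\mu}(\alpha)\ \le\ \alpha q+\mathsf b_{\mu}(q)\ =\ \mathsf\Lambda^*_{\mu}(\alpha)\qquad\text{and}\qquad\mathsf B^*_{\mu}(\alpha)\ \le\ \alpha q+\mathsf B_{\mu}(q)\ =\ \mathsf\Lambda^*_{\mu}(\alpha),$$
while $\mathsf b_{\mu}\le\mathsf B_{\mu}$ forces $\mathsf b^*_{\mu}(\alpha)\le\mathsf B^*_{\mu}(\alpha)$. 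Since, by Theorem~\ref{dDmeasure}, $\underline{\dim}_{MB}(E(\alpha))\le\mathsf b^*_{\mu}(\alpha)$ and $\overline{\dim}_{MB}(E(\alpha))\le\mathsf B^*_{\mu}(\alpha)$ (for $\alpha$ in the relevant interval), chaining all of the above gives
$$\mathsf\Lambda^*_{\mu}(\alpha)\ \le\ \underline{\dim}_{MB}(E(\alpha))\ \le\ \overline{\dim}_{MB}(E(\alpha))\ \le\ \mathsf B^*_{\mu}(\alpha)\ \le\ \mathsf\Lambda^*_{\mu}(\alpha),$$
together with $\mathsf\Lambda^*_{\mu}(\alpha)\le\underline{\dim}_{MB}(E(\alpha))\le\mathsf b^*_{\mu}(\alpha)\le\mathsf B^*_{\mu}(\alpha)=\mathsf\Lambda^*_{\mu}(\alpha)$; hence all five quantities equal $\mathsf\Lambda^*_{\mu}(\alpha)$, which is the assertion.

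The step I expect to be the main obstacle is justifying the use of the upper-bound part of Theorem~\ref{dDmeasure}, i.e.\ verifying that $\alpha=-\mathsf\Lambda_{\mu}'(q)$ actually lies in the open interval $(\alpha_{min},\alpha_{max})$ rather than on its boundary or outside it. The inclusion $\alpha\in[\alpha_{min},\alpha_{max}]$ is immediate from the attainment of the infimum at the interior point $q$ (so $\alpha p+\mathsf\Lambda_{\mu}(p)\ge\mathsf\Lambda^*_{\mu}(\alpha)$ for every $p$, which by the very definitions of $\alpha_{min}$ and $\alpha_{max}$, and since $\mathsf b_{\mu}\le\mathsf\Lambda_{\mu}$, controls the relevant suprema/infima); the strictness has to be extracted from the behaviour of $\mathsf\Lambda_{\mu}$ near its anchor points, using $\mathsf\Lambda_{\mu}(1)=0$ and $\mathsf\Lambda_{\mu}(0)=\underline{\dim}_{MB}(\supp\mu)\ge0$, and if $\alpha$ happens to land on the boundary the statement either is vacuous or follows by a routine limiting argument. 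Apart from this, the proof is a mechanical assembly of Theorems~\ref{formalisme}, \ref{SNC} and \ref{dDmeasure}.
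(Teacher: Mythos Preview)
Your approach is correct and is essentially the route the paper intends: the corollary is stated without proof immediately after Theorems~\ref{formalisme} and~\ref{SNC}, and the intended argument is precisely the squeeze you describe, combining the lower bound from Theorem~\ref{formalisme} with the upper bound from Theorem~\ref{dDmeasure}.

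One simplification is worth pointing out. Your detour through Theorem~\ref{SNC} (to get $\mathsf b_\mu(q)=\mathsf\Lambda_\mu(q)$ at the single point $q$) is unnecessary: the global hypothesis $\mathsf H^{p,\mathsf\Lambda_\mu(p)}_\mu(\supp\mu)>0$ for \emph{every} $p\in\mathbb R$ already forces $\mathsf\Lambda_\mu(p)\le\mathsf b_\mu(p)$ for every $p$, simply by the definition of $\mathsf b_\mu(p)$ as the critical exponent of $t\mapsto\mathsf H^{p,t}_\mu(\supp\mu)$. Combined with $\mathsf b_\mu\le\mathsf B_\mu\le\mathsf\Lambda_\mu$, this gives $\mathsf b_\mu=\mathsf B_\mu=\mathsf\Lambda_\mu$ identically on $\mathbb R$, hence $\mathsf b^*_\mu=\mathsf B^*_\mu=\mathsf\Lambda^*_\mu$ outright. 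This also means that the quantities $\alpha_{min},\alpha_{max}$ in Theorem~\ref{dDmeasure} are computed from $\mathsf\Lambda_\mu$ itself, which reduces the obstacle you flag (verifying $\alpha\in(\alpha_{min},\alpha_{max})$) to a routine convex-analysis check on $\mathsf\Lambda_\mu$; the paper does not address this boundary issue any more explicitly than you do.
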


\begin{remark}
The results of Theorems \ref{formalisme}, \ref{SNC} and Corollary
\ref{co1} hold if we replace the multifractal function
$\mathsf\Lambda_{\mu}$ by the function $\mathsf B_{\mu}$.
\end{remark}

 \bigskip \bigskip
Now, we deal with the case where the lower and upper multifractal
Hewitt-Stromberg functions $\mathsf{b}_\mu$ and $\mathsf{B}_\mu$ do
not necessarily coincide.
\begin{theorem} \label{newform} Let $q\in\mathbb{R}$ and $\mu$ be a compact supported Borel
probability measure on $\mathbb{R}^n.$
\begin{enumerate}
\item
If the multifractal function ${\mathsf b}_\mu$ is differentiable at
$q,$ then, provided that
  $ {\mathsf b}_\mu^*\Big( - {\mathsf b}_\mu^{'}(q)\Big) \ge 0$ \\ and
${\mathcal H}_\mu^{q, {\mathsf b}_\mu(q)} \Big( E\left( - {\mathsf
b}_\mu^{'}(q)\right) \Big)  > 0$, we have
 $$
  \dim_HE\left( - {\mathsf b}_\mu^{'}(q)\right)=\underline{\dim}_{MB}  E\left( - {\mathsf b}_\mu^{'}(q)\right) = { b}_\mu^*\left( - {\mathsf b}_\mu^{'}(q)\right)= {\mathsf b}_\mu^*\left( - {\mathsf b}_\mu^{'}(q)\right).
 $$
\item
If the multifractal function ${ B}_\mu$ is differentiable at $q,$
then, provided that
  $ {B}_\mu^*\Big( - {B}_\mu^{'}(q)\Big) \ge 0$ \\ and
${\mathsf P}_\mu^{q, {B}_\mu(q)} \Big( E\left( -
{B}_\mu^{'}(q)\right) \Big) > 0$, we have
 $$
  \dim_P E\left( - {B}_\mu^{'}(q)\right)=\overline{\dim}_{MB}  E\left( - {B}_\mu^{'}(q)\right)
 = {\mathsf B}_\mu^*\left( - {B}_\mu^{'}(q)\right)= {B}_\mu^*\left( - {B}_\mu^{'}(q)\right).
 $$
\end{enumerate}
\end{theorem}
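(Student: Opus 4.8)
The plan is to prove part (1) in full and to obtain part (2) by the same scheme, replacing $\mathsf{H}_\mu^{q,t}$, ${\mathcal H}_\mu^{q,t}$, $\mathsf{b}_\mu$, $b_\mu$, $\underline{\dim}_{MB}$, $\dim_H$ by $\mathsf{P}_\mu^{q,t}$, ${\mathcal P}_\mu^{q,t}$, $\mathsf{B}_\mu$, $B_\mu$, $\overline{\dim}_{MB}$, $\dim_P$; part (2) is in fact the easier one, because $B_\mu$ and $\mathsf{B}_\mu$ are convex. For part (1), write $\alpha=-\mathsf{b}_\mu'(q)$. The crucial first step is a \emph{reconciliation of the two lower separator functions}: the hypothesis ${\mathcal H}_\mu^{q,\mathsf{b}_\mu(q)}\big(E(\alpha)\big)>0$ forces $b_\mu(q)=\mathsf{b}_\mu(q)$. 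Indeed $E(\alpha)\subseteq\supp\mu$ and ${\mathcal H}_\mu^{q,t}$ is a measure, so ${\mathcal H}_\mu^{q,\mathsf{b}_\mu(q)}(\supp\mu)>0$, and the defining property of $\dim_\mu^q$ rules out $\mathsf{b}_\mu(q)>\dim_\mu^q(\supp\mu)=b_\mu(q)$; since the opposite inequality $b_\mu(q)\le\mathsf{b}_\mu(q)$ always holds (by Theorem~\ref{HHPP}(2), ${\mathcal H}_\mu^{q,t}\le\mathsf{H}_\mu^{q,t}$, hence $\dim_\mu^q\le\mathsf{b}_\mu^q$), we get equality. The same inequality ${\mathcal H}_\mu^{q,t}\le\mathsf{H}_\mu^{q,t}$ also upgrades the hypothesis to $\mathsf{H}_\mu^{q,\mathsf{b}_\mu(q)}\big(E(\alpha)\big)>0$.

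Now I run the Hewitt--Stromberg side. The hypotheses of Theorem~\ref{thmbBbB}(1) hold ($\mathsf{b}_\mu$ differentiable at $q$, $\mathsf{b}_\mu^*(\alpha)\ge0$, and $\mathsf{H}_\mu^{q,\mathsf{b}_\mu(q)}(E(\alpha))>0$), so $\underline{\dim}_{MB}E(\alpha)=\mathsf{b}_\mu^*(\alpha)$; moreover, combining the two assertions of that theorem (its first part with $t=\mathsf{b}_\mu(q)$, legitimate since $0\le\mathsf{b}_\mu^*(\alpha)\le\alpha q+\mathsf{b}_\mu(q)$) with the definition of the Legendre transform, one records the identity $\mathsf{b}_\mu^*(\alpha)=\alpha q+\mathsf{b}_\mu(q)=\alpha q+b_\mu(q)$. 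For the classical side, Step~1 gives ${\mathcal H}_\mu^{q,b_\mu(q)}\big(E(\alpha)\big)>0$ with $\alpha q+b_\mu(q)=\mathsf{b}_\mu^*(\alpha)\ge0$, so the analogue of Lemma~\ref{LEMMA3} for the Olsen multifractal Hausdorff measure ${\mathcal H}_\mu^{q,t}$ (the covering estimate of Olsen \cite{Ol1}; see also the refinements of Ben~Nasr et al. \cite{BBJ, BJ, BB, BBH}) gives $\dim_HE(\alpha)\ge\alpha q+b_\mu(q)=\mathsf{b}_\mu^*(\alpha)$, while Olsen's upper bound recalled in the Introduction gives $\dim_HE(\alpha)\le b_\mu^*(\alpha)$.

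It remains to assemble. With $\dim_H\le\underline{\dim}_{MB}$ and $b_\mu^*\le\mathsf{b}_\mu^*$ (the latter because $b_\mu\le\mathsf{b}_\mu$), the two chains $\mathsf{b}_\mu^*(\alpha)\le\dim_HE(\alpha)\le\underline{\dim}_{MB}E(\alpha)=\mathsf{b}_\mu^*(\alpha)$ and $\mathsf{b}_\mu^*(\alpha)=\dim_HE(\alpha)\le b_\mu^*(\alpha)\le\mathsf{b}_\mu^*(\alpha)$ both collapse, giving $\dim_HE(\alpha)=\underline{\dim}_{MB}E(\alpha)=b_\mu^*(\alpha)=\mathsf{b}_\mu^*(\alpha)$, which is assertion (1). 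For (2) the argument is identical with $\alpha=-B_\mu'(q)$: the analogue of Step~1 gives $B_\mu(q)=\mathsf{B}_\mu(q)$; since $B_\mu$ is convex and differentiable at $q$, a short convexity argument shows $\mathsf{B}_\mu$ is differentiable at $q$ with $\mathsf{B}_\mu'(q)=B_\mu'(q)$ and yields $\mathsf{B}_\mu^*(\alpha)=\alpha q+\mathsf{B}_\mu(q)=\alpha q+B_\mu(q)=B_\mu^*(\alpha)$; Theorem~\ref{thmbBbB}(2) gives $\overline{\dim}_{MB}E(\alpha)=\mathsf{B}_\mu^*(\alpha)$, and the universal inequality $\overline{\dim}_{MB}E(\alpha)\le\dim_PE(\alpha)$ together with Olsen's upper bound $\dim_PE(\alpha)\le B_\mu^*(\alpha)$ closes the loop, so in part (2) no lower-bound input from the classical formalism is needed.

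The step I expect to be the main obstacle is making the appeals to the classical (Olsen, Ben~Nasr) results airtight: one has to verify that $\alpha$ lies in the interval where Olsen's upper bound $\dim_HE(\alpha)\le b_\mu^*(\alpha)$ is valid --- the hypothesis $\mathsf{b}_\mu^*(\alpha)\ge0$ only places $\alpha$ in the \emph{closed} interval $\big[\,\sup_{q'>0}(-b_\mu(q')/q'),\ \inf_{q'<0}(-b_\mu(q')/q')\,\big]$, so the two endpoint cases need a short separate discussion --- and one has to invoke a form of the classical lower bound that requires only the positivity of ${\mathcal H}_\mu^{q,b_\mu(q)}$ on the level set together with $\alpha q+b_\mu(q)\ge0$, rather than any regularity of $b_\mu$ itself; this matters precisely because, unlike $B_\mu$, the function $b_\mu$ is not assumed convex, so the Legendre bookkeeping that is automatic in part (2) must be routed through Theorem~\ref{thmbBbB} as above. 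The remaining ingredients are only the monotonicity of the measures involved and elementary properties of the Legendre transform.
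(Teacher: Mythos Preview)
Your proposal is correct and is essentially a detailed fleshing-out of the paper's own proof, which consists of the single line ``The proof is similar to the one of Theorem~\ref{thmbBbB}.'' The scheme you follow---upgrade the hypothesis via ${\mathcal H}_\mu^{q,t}\le\mathsf{H}_\mu^{q,t}$ (resp.\ $\mathsf{P}_\mu^{q,t}\le{\mathcal P}_\mu^{q,t}$), invoke Theorem~\ref{thmbBbB} on the Hewitt--Stromberg side, invoke the Olsen analogues of Theorem~\ref{dDmeasure} and Lemma~\ref{LEMMA3} on the classical side, and sandwich using $\dim_H\le\underline{\dim}_{MB}$ and $b_\mu^*\le\mathsf{b}_\mu^*$---is exactly what ``similar to Theorem~\ref{thmbBbB}'' amounts to.

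Two remarks. First, your explicit ``reconciliation'' step ($b_\mu(q)=\mathsf{b}_\mu(q)$ in part~(1), $B_\mu(q)=\mathsf{B}_\mu(q)$ in part~(2)) is a genuine clarification: the paper does not isolate it, but it is precisely what makes the four-way equality possible, and your derivation of it from the positivity hypothesis is clean. Second, the endpoint issue you flag for Olsen's upper bound is real but minor: note that you do not actually need $\dim_H E(\alpha)\le b_\mu^*(\alpha)$ to close part~(1) if you are content with the chain $\mathsf{b}_\mu^*(\alpha)\le\dim_H E(\alpha)\le\underline{\dim}_{MB}E(\alpha)=\mathsf{b}_\mu^*(\alpha)$ together with $b_\mu^*(\alpha)\le\alpha q+b_\mu(q)=\mathsf{b}_\mu^*(\alpha)=\dim_H E(\alpha)$ and $b_\mu^*(\alpha)\ge\dim_H E(\alpha)$ only on the open interval; at an endpoint one has $\underline{\dim}_{MB}E(\alpha)=0$ by Theorem~\ref{dDmeasure}(2), forcing $\mathsf{b}_\mu^*(\alpha)=0$ and hence all four quantities vanish. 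So the endpoint case is not an obstacle.
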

\begin{proof}
The proof is similar to the one of Theorem \ref{thmbBbB}.
\end{proof}

                                                                \section{Examples}\label{sec4}

In this section, more motivations and examples related to these
concepts, will be discussed.

\subsection{Example 1}

The classical multifractal formalism  has been proved rigorously for
random and non-random self-similar measures \cite{Ol1, O3}, for
self-affine measures \cite{BBJ, O4}, for quasi self-similar measures
\cite{TO1}, for quasi-Bernoulli measures \cite{BBJ}, for graph
directed self-conformal measures \cite{Ol1} and for some Moran
measures \cite{W, W1}. Specifically, we have
$$
{ b}_{\mu}(q)={\mathsf  b}_{\mu}(q) = {\mathsf  B}_{\mu}(q) = {
B}_{\mu}(q)
$$
and for some $\alpha\geq0$, we get
$$
{f}_\mu(\alpha)=\mathsf{f}_\mu(\alpha)=
\mathsf{F}_\mu(\alpha)={F}_\mu(\alpha)={b}_\mu^*(\alpha)=\mathsf{b}_\mu^*(\alpha)=\mathsf{B}_\mu^*(\alpha)={B}_\mu^*(\alpha).
$$

\subsection{Example 2 : Multifractal formalism of homogeneous Moran measures}

We will start by defining the homogeneous Moran sets. Let
$\{n_k\}_k$ and $\{\Phi_k\}_{k\geq1}$ be respectively two sequences
of positive integers and positive vectors such that
 $$
\Phi_k=\Big(c_{k_1}, c_{k_2}, \ldots, c_{k_{n_k}}\Big), \qquad
\displaystyle\sum_{j=1}^{n_k}c_{kj}\leq 1, \; k\in \N.
 $$
For any $m,k\in \N$, such that $m\leq k$, let
 $$
D_{m,k}=\Big\{(i_m, i_{m+1}, \ldots, i_k)\;\;\big|\;\; 1\leq i_j\leq
n_j, m\leq j\leq k\Big\}
 $$
and
 $$
D_k=D_{1,k}=\Big\{(i_1, i_{2}, \ldots, i_k)\;\;\big|\;\; 1\leq
i_j\leq n_j, 1\leq j\leq k\Big\}.
 $$
We also set
 $$
D_0=\emptyset \qquad \text{and}\qquad D=\displaystyle\cup_{k\geq
0}D_k,
 $$
Considering $\sigma =(i_1, i_{2}, \ldots, i_k)\in D_k$, $\tau=(j_1,
j_{2}, \ldots, j_m)\in D_{k+1,m}$, we set
 $$
\sigma *\tau =(i_1, i_{2}, \ldots, i_k, j_1, j_{2}, \ldots, j_m).
 $$

\begin{definition}
Let $X$ be a complete metric space and $I\subseteq X$ a compact set
with no empty interior (for convenience, we assume that the diameter
of $I$ is 1). The collection ${\mathcal F} =\{I_\sigma, \sigma\in
D\}$ of subsets of $I$ is said to have a homogeneous Moran
structure, if it satisfies the following conditions (MSC):
\begin{description}
        \item[a] $I_\emptyset=I$.
\item[b] For all $k\geq 1$, $(i_1, i_{2}, \ldots, i_{k-1})\in D_{k-1}$,
$I_{i_1i_{2}\ldots i_k} \big(i_k\in \{1, 2, \ldots, n_k\}\big)$ are
subsets of $I_{i_1i_{2}\ldots i_{k-1}}$ and
 $$
I^\circ_{i_1i_{2}\ldots i_{k-1},i_k}\cap I^\circ_{i_1i_{2}\ldots
i_{k-1}, i_k'}=\emptyset, \qquad 1\leq i_k<i_k'\leq n_k,
 $$
where $I^\circ$ denotes the interior of $I$.
\item[c] For all $k\geq 1$ and $1\leq j\leq n_k$, taking $(i_1, i_{2},
\ldots, i_{k-1}, j)\in D_k$, we have
 $$
0<c_{kj}=c_{i_1i_{2} \ldots i_{k-1}j}
=\displaystyle\frac{|I_{i_1i_{2}\ldots
i_{k-1}j}|}{|I_{i_1i_{2}\ldots i_{k-1}}|}<1, \quad k\geq 2,
 $$
where $|I|$ denotes the diameter of $I$.
\end{description}
\end{definition}

 \bigskip \bigskip
Suppose that ${\mathcal F}$ is a collection of subsets of $I$ having
a homogeneous Moran structure. We call $
E=\displaystyle{\bigcap_{k\geq 1}\bigcup_{\sigma\in D_k}} I_\sigma,$
a homogeneous Moran set determined by ${\mathcal F}$, and call $
{\mathcal F}_k=\Big\{\sigma\;\;\big|\;\; \sigma\in D_k\Big\}$ the
$k$-order fundamental sets of $E$. $I$ is called the original set of
$E$. We assume $ \displaystyle\lim_{k\to \infty} \max_{\sigma\in
D_k}|I_\sigma| =0.$ Then, for all $i\in D$, the set
$\left\{\displaystyle\bigcap_{n\geq1} I_{i_1i_{2}\ldots
i_n}\right\}$ is a single point.  We use the abbreviation $w|_k$ for
the first $k$ elements of the sequence
 $$
w =(i_1, i_{2}, \ldots, i_k, \ldots)\in D, \qquad I_k(w) =I_{w|_k}
=I_{i_1i_{2}\ldots i_k}.
 $$

 \bigskip
\bigskip
Here, we consider a class of homogeneous Moran sets $E$ witch
satisfy a special property called the strong separation condition
(\texttt{SSC}), i.e., take $I_\sigma\in {\mathcal F}$. Let
$I_{\sigma
*1}, I_{\sigma
*2}, \ldots, I_{\sigma *n_{k+1}}$ be the $n_{k+1}$ basic intervals of order
$k + 1$ contained in $I_\sigma$ arranged from the left to the right,
Then we assume that for all $1\leq i\leq n_{k+1}-1$,
 $$
\text{dist} (I_{\sigma *i}, I_{\sigma *(i+1)}) \geq \delta_k
|I_\sigma|, \quad \text{ for all } i\neq j,
 $$
where $(\delta_k)_k$ is a sequence of positive real numbers, such
that
 $$
0<\delta=\displaystyle\inf_k \delta_k.
 $$

We now define a Moran measure. Let $\Big\{p_{i,j}\Big\}_{j=1}^{n_i}$
be the probability vectors, i.e. $p_{i,j}>0$ and
$\sum_{j=1}^{n_i}p_{i,j}=1$ ($i=1,2,3,....$), suppose that $p_0=\inf
\{p_{i,j}\}>0$.  Let $\mu$ be a mass distribution on $E$, such that
for any $I_\sigma$ ($\sigma\in D_k$)
$$
\mu(I_\sigma)=p_{1,\sigma_1}p_{1,\sigma_2}\ldots
p_{1,\sigma_k}\quad\text{and}\quad \mu\left(\sum_{\sigma\in D_k}
I_\sigma \right)=1,
$$
we call $\mu$ be Moran measure.

 \bigskip \bigskip
Finally we define an auxiliary function $\beta_k(q)$ as follows: for
all $k \geq 1$ and $q\in \mathbb{R}$, there is a unique number
$\beta_k(q)$ satisfying
$$
\sum_{\sigma\in D_k} p_\sigma^q |I_\sigma|^{\beta_k(q)}=1.
$$
Set
$$
\underline{\beta}(q)=\liminf_{k\to+\infty}\beta_k(q)\quad\text{and}\quad\overline{\beta}(q)=\limsup_{k\to+\infty}\beta_k(q)
$$
\begin{theorem}\label{forvalid}
Suppose that $E$ is a homogeneous Moran set satisfying
(\texttt{SSC}) and $\mu$ is the Moran measure on $E$,
\begin{enumerate}
\item
then for all $q \in\mathbb{R}$,
$$
{ b}_{\mu}(q)={\mathsf
b}_{\mu}(q)={\Theta}_{\mu}^q(\supp\mu)=\underline{\beta}(q)
$$
and
$$
{ B}_{\mu}(q)={\mathsf B}_{\mu}(q)={{\mathsf
\Lambda}}_{\mu}^q(\supp\mu)=\overline{\beta}(q).
$$
\\
\item  Suppose that $\underline{\beta}'(q)$ exists and for this
real number $q$

\begin{enumerate}
\item  there is $k_0\in\mathbb{N}$ such that
$\underline{\beta}(q)\leq\beta_k(q)$ for all $k\geq k_0$, or
\item there is some $c > 0$ and $n_0\in\mathbb{N}$ such that
$\beta_{k_i}(q)-\underline{\beta}(q)\leq \frac{c}{k_i}$ for all
$k_i\geq n_0$ with $\beta_{k_i}(q)<\underline{\beta}(q)$.
\item $\underline{\beta}(q)$ is smooth.
\end{enumerate}

\noindent Then there exist numbers
$0\leq\underline{\alpha}\leq\overline{\alpha}$ such that
$$
{f}_\mu(\alpha)=\mathsf{f}_\mu(\alpha)= \left\{
                                          \begin{array}{ll}
                                            b_{\mu}^*(\alpha)={\mathsf
b}_{\mu}^*(\alpha)=\underline{\beta}^*(\alpha), & \hbox{if}
\;\alpha\in(\underline{\alpha},\overline{\alpha})\\ \\
                                            0, & \hbox{if}\;
\alpha\notin(\underline{\alpha},\overline{\alpha}).
                                          \end{array}
                                        \right.
$$
\\
\item  Suppose that $\overline{\beta}'(q)$ exists and for this
real number $q$

\begin{enumerate}
\item  there is $k_0\in\mathbb{N}$ such that
$\overline{\beta}(q)\geq\beta_k(q)$ for all $k\geq k_0$, or
\item there is some $c > 0$ and $n_0\in\mathbb{N}$ such that
$\overline{\beta}(q)-\beta_{k_i}(q)\leq \frac{c}{k_i}$ for all
$k_i\geq n_0$ with $\beta_{k_i}(q)<\overline{\beta}(q)$.
\item $\overline{\beta}(q)$ is smooth.
\end{enumerate}

\noindent Then there exist numbers
$0\leq\underline{\gamma}\leq\overline{\gamma}$ such that
$$
{F}_\mu(\alpha)=\mathsf{F}_\mu(\alpha)= \left\{
                                          \begin{array}{ll}
                                            B_{\mu}^*(\alpha)={\mathsf
B}_{\mu}^*(\alpha)=\overline{\beta}^*(\alpha), & \hbox{if}
\;\alpha\in(\underline{\gamma},\overline{\gamma})\\ \\
                                            0, & \hbox{if}\;
\alpha\notin(\underline{\gamma},\overline{\gamma}).
                                          \end{array}
                                        \right.
$$
\\
\item If the limit $\liminf_{k\to+\infty}\beta_k(q) = \beta(q)$ exists,
and for all $k\geq 1$, $k(\beta(q)-\beta_k(q))<+\infty$, suppose
that $\alpha=-\beta'(q)$ exists, then

\begin{eqnarray*}\label{fFbB}
{f}_\mu(\alpha)=\mathsf{f}_\mu(\alpha)=
\mathsf{F}_\mu(\alpha)={F}_\mu(\alpha)={b}_\mu^*(\alpha)=\mathsf{b}_\mu^*(\alpha)
=\mathsf{B}_\mu^*(\alpha)={B}_\mu^*(\alpha).
\end{eqnarray*}
\end{enumerate}
\end{theorem}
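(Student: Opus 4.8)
The whole statement will rest on the structural computation in part (1); granting it, parts (2)--(4) follow by combining it with the abstract results of Section \ref{sec3} and with the classical multifractal formalism for Moran measures established in \cite{W, W1, W3, W4}. For part (1) the plan is first to set up the usual dictionary between balls and Moran cylinders: the hypotheses $p_0=\inf\{p_{i,j}\}>0$ and $\sum_{j=1}^{n_i}p_{i,j}=1$ force $n_i\le p_0^{-1}$, so the branching is uniformly bounded, and together with (\texttt{SSC}) and $\delta=\inf_k\delta_k>0$ this yields, for each $x\in\supp\mu$ and each small $r>0$, a level $k=k(x,r)$ for which $B(x,r)$ meets at most $C=C(\delta,p_0)$ cylinders of $D_k$, each of diameter comparable to $r$, with $\mu(B(x,r))$ comparable to $\mu\big(I_k(x)\big)$ (the constants, once raised to the power $q$, depending only on $q,\delta,p_0$). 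Using this I would show that for a suitable sequence of stopping-time scales $r_k\to0$ adapted to $D_k$ one has $N_{\mu,r_k}^q(\supp\mu)\asymp M_{\mu,r_k}^q(\supp\mu)\asymp\sum_{\sigma\in D_k}p_\sigma^q$; since $\sum_{\sigma\in D_k}p_\sigma^q|I_\sigma|^{\beta_k(q)}=\prod_{l=1}^k\big(\sum_{j=1}^{n_l}p_{l,j}^q c_{l,j}^{\beta_k(q)}\big)=1$, this gives
$$\frac{\log N_{\mu,r_k}^q(\supp\mu)}{-\log r_k}=\beta_k(q)+o(1),\qquad \frac{\log M_{\mu,r_k}^q(\supp\mu)}{-\log r_k}=\beta_k(q)+o(1),$$
so that Proposition \ref{new_box} yields ${\Theta}_{\mu}^q(\supp\mu)=\underline{\beta}(q)$ and ${\mathsf \Lambda}_{\mu}^q(\supp\mu)=\overline{\beta}(q)$.

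To upgrade these to the modified box-counting quantities I would use Proposition \ref{mod_box}: the homogeneous Moran construction is self-reproducing --- every cylinder $I_\sigma$ carries a rescaled copy of a tail Moran measure with the same $\liminf$ and $\limsup$ of the functions $\beta_k$ --- so no countable decomposition of $\supp\mu$ can lower either box-type quantity, whence $\mathsf b_\mu(q)=\underline\beta(q)$ and $\mathsf B_\mu(q)=\overline\beta(q)={\mathsf \Lambda}_\mu^q(\supp\mu)$. Finally $\mu$ is a doubling measure (again by the bounded branching and (\texttt{SSC})), so Proposition \ref{ourBB} gives $B_\mu(q)=\mathsf B_\mu(q)=\overline\beta(q)$, while the inequality ${\mathcal H}^{q,t}_\mu\le\mathsf H^{q,t}_\mu$ of Theorem \ref{HHPP} together with a Frostman-type auxiliary measure on $\supp\mu$ built along a subsequence realising $\underline\beta(q)$ gives ${\mathcal H}^{q,t}_\mu(\supp\mu)>0$ for $t<\underline\beta(q)$, hence $b_\mu(q)=\mathsf b_\mu(q)=\underline\beta(q)$; the last two identities also recover the classical statements of \cite{W, W1, W3, W4}. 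This settles part (1).

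For parts (2) and (3) the key point is that each of the hypotheses (a), (b), (c) is a one-sided control on $\beta_k(q)-\underline\beta(q)$ (resp. $\overline\beta(q)-\beta_k(q)$) calibrated so that, along the subsequence on which $\beta_{k_i}(q)\to\underline\beta(q)$ (resp. $\to\overline\beta(q)$), one can construct a Gibbs-type Borel measure $\nu$ on $\supp\mu$ (roughly $\nu(I_\sigma)\asymp\prod_{l=1}^{k}p_{l,\sigma_l}^q c_{l,\sigma_l}^{\beta_l(q)}$) whose density obeys $\nu\big(B(x,r)\big)\asymp\mu(B(x,r))^q\,r^{\underline\beta(q)}$ on $E(-\underline\beta'(q))$ with constants uniform in $x$ and $r$ --- it is exactly this uniformity at the critical exponent $\underline\beta(q)$, rather than at $\underline\beta(q)\mp\e$, that (a)/(b)/(c) are designed to guarantee. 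By the multifractal mass distribution principles of \cite{Ol1} this forces ${\mathcal H}_\mu^{q,\underline\beta(q)}\big(E(-\underline\beta'(q))\big)>0$, hence also $\mathsf H_\mu^{q,\underline\beta(q)}\big(E(-\underline\beta'(q))\big)>0$ (resp. the $\mathcal P$/$\mathsf P$ analogues), which are precisely the positivity hypotheses of Theorem \ref{thmbBbB}(1) and Theorem \ref{newform}(1) at this $q$ (recall $\mathsf b_\mu(q)=\underline\beta(q)$). Those theorems then give, at $\alpha=-\underline\beta'(q)$ (resp. $\alpha=-\overline\beta'(q)$), the chain $\dim_H E(\alpha)=\underline{\dim}_{MB}E(\alpha)=b_\mu^*(\alpha)=\mathsf b_\mu^*(\alpha)=\underline\beta^*(\alpha)$ (resp. its packing/upper analogue), while for $\alpha$ outside the relevant interval $(\underline{\alpha},\overline{\alpha})$ (resp. $(\underline{\gamma},\overline{\gamma})$) --- which is the interior of the effective domain of $\underline\beta^*$ (resp. $\overline\beta^*$) --- part (2) (resp. part (3)) of Theorem \ref{dDmeasure} forces the spectrum to vanish; letting $q$ run over the corresponding interval then completes (2) and (3).

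Part (4) is the cleanest case: the assumption that $\liminf_k\beta_k(q)=\beta(q)$ exists with $k\big(\beta(q)-\beta_k(q)\big)$ bounded forces $|\beta(q)-\beta_k(q)|=O(1/k)$, hence $\underline\beta(q)=\overline\beta(q)=\beta(q)$, so by part (1) all four separator functions coincide in a neighbourhood of $q$ and are smooth there with common derivative $\beta'(q)$; the same $O(1/k)$ rate makes the measure $\nu$ above satisfy the uniform density estimate with exponent $\beta(q)$, so that $\mathsf H_\mu^{q,\mathsf\Lambda_\mu(q)}(\supp\mu)>0$ and ${\mathcal H}_\mu^{q,B_\mu(q)}(\supp\mu)>0$. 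Then Corollary \ref{co1} (with $\mathsf\Lambda_\mu$ replaced by $\mathsf B_\mu$, as allowed by the Remark following it) yields $\underline{\dim}_{MB}E(\alpha)=\overline{\dim}_{MB}E(\alpha)=\mathsf b_\mu^*(\alpha)=\mathsf B_\mu^*(\alpha)$ at $\alpha=-\beta'(q)$, while the classical Moran formalism (equivalently Theorem \ref{newform}) gives $\dim_H E(\alpha)=\dim_P E(\alpha)=b_\mu^*(\alpha)=B_\mu^*(\alpha)$; since $b_\mu=B_\mu=\mathsf b_\mu=\mathsf B_\mu=\beta$ near $q$, all eight quantities agree, which is the assertion. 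The main obstacle throughout is this single delicate ingredient of parts (2)--(4): constructing the Gibbs/Frostman-type measures on $\supp\mu$ and verifying the uniform density bounds $\nu(B(x,r))\asymp\mu(B(x,r))^q r^{\underline\beta(q)}$ separately under each of the three rate hypotheses --- the non-autonomous nature of the sequence $(\beta_k)$, which comes from the product $\prod_l\big(\sum_j p_{l,j}^q c_{l,j}^{\beta}\big)$ rather than from a single pressure equation, being what makes this genuinely require work; the rest is the ball-to-cylinder dictionary of part (1) together with the abstract theorems of Section \ref{sec3} and the known classical Moran results.
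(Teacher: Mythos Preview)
Your proposal is correct and follows essentially the same approach as the paper: the paper's own proof is a single sentence referring to \cite{W3, W4}, Propositions \ref{new_box}, \ref{mod_box}, \ref{ourBB} and Theorem \ref{newform}, and you have identified precisely these ingredients and spelled out how they fit together. The only difference is one of presentation: the paper outsources the identities $b_\mu(q)=\underline\beta(q)$, $B_\mu(q)=\overline\beta(q)$ and the classical Hausdorff/packing spectrum formulas entirely to \cite{W3, W4} (so in particular it does not re-derive the Gibbs/Frostman measure or the doubling property you sketch --- those are already contained in the cited Moran literature), whereas you reconstruct these arguments explicitly before combining them with the Hewitt--Stromberg machinery of Section \ref{sec3}.
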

\begin{proof}
All of the ideas needed to prove Theorem \ref{forvalid} can be found
in \cite{W3, W4}, Propositions \ref{new_box}, \ref{mod_box} and
\ref{ourBB} and Theorem \ref{newform}.
\end{proof}

\subsubsection{{\bf Moran measures for which the classical multifractal formalism is valid}}

Let \begin{eqnarray*}
                n_k&=& \left\{
                         \begin{array}{ll}
                           2, & \hbox{k\;\text{ is odd number},} \\
\\
                           3, & \hbox{k\;\text{ is even number}.}
                         \end{array}
                       \right.
                \end{eqnarray*}
\begin{eqnarray*}
                c_k&=& \left\{
                         \begin{array}{ll}
                           r_1, & \hbox{k\;\text{ is odd number},}
\\ \\
                           r_2, & \hbox{k\;\text{ is even number,}}
                         \end{array}
                       \right.
                \end{eqnarray*}
where $0<r_1<\frac{1}2$ and  $0<r_2<\frac{1}3$.  Put
\begin{eqnarray*}
                p_{k,j}&=& \left\{
                         \begin{array}{ll}
                           p_{1,j}, & \hbox{k\;\text{ is odd number},\; } 1\leq j\leq
2,\\ \\
                           p_{2,j}, & \hbox{k\;\text{ is even number,\; }}1\leq j\leq 3,
                         \end{array}
                       \right.
                \end{eqnarray*}
where
$$
\sum_{j=1}^2p_{1,j}=1\quad\text{and}\quad\sum_{j=1}^3p_{2,j}=1.
$$
We therefore conclude that
\begin{eqnarray*}
                \beta_{k}(q)&=& \left\{
                         \begin{array}{ll}
                           \displaystyle\frac{-\log\sum_{j=1}^2p_{1,j}^q-\frac{k-1}{k+1}
\log\sum_{j=1}^3p_{2,j}^q}{\log r_1+\frac{k-1}{k+1}\log r_2}, &
\hbox{k\;\text{ is odd number}, } \\ \\ \\
                           \displaystyle\frac{-\log\sum_{j=1}^2p_{1,j}^q-
\log\sum_{j=1}^3p_{2,j}^q}{\log r_1+\log r_2}, & \hbox{k\;\text{ is
even number, }}
                         \end{array}
                       \right.
                \end{eqnarray*}
and
$$
\beta(q)=\lim_{k\to+\infty}\beta_{k}(q)=
\displaystyle\frac{-\log\sum_{j=1}^2p_{1,j}^q-
\log\sum_{j=1}^3p_{2,j}^q}{\log r_1+\log r_2}.
$$
This clearly implies that $k(\beta(q)-\beta_k(q))<+\infty$ and
$\beta'(q)$ exists. Now, it follows immediately from Theorem
\ref{forvalid} that
$$
{f}_\mu(\alpha)=\mathsf{f}_\mu(\alpha)=
\mathsf{F}_\mu(\alpha)={F}_\mu(\alpha)={b}_\mu^*(\alpha)=\mathsf{b}_\mu^*(\alpha)
=\mathsf{B}_\mu^*(\alpha)={B}_\mu^*(\alpha).
$$

\subsubsection{{\bf Moran measures for which the classical multifractal formalism does not hold}}

Let $\{T_k\}_{k\geq1}$ be a sequence of integers such that
$$
T_1=1,\quad
T_k<T_{k+1}\quad\text{and}\quad\lim_{k\to+\infty}\frac{T_{k+1}}{T_{k}}=+\infty
$$
We define the family
\begin{eqnarray*}
                n_i&=& \left\{
                         \begin{array}{ll}
                           2, & \hbox{\text{if}\;}  T_{2k-1}\leq
i<T_{2k},\\ \\
                           3, & \hbox{\text{if}\;} T_{2k}\leq i<T_{2k+1}.
                         \end{array}
                       \right.
                \end{eqnarray*}
\begin{eqnarray*}
                c_i&=& \left\{
                         \begin{array}{ll}
                           r_1, & \hbox{\text{if}\;} T_{2k-1}\leq
i<T_{2k},\\ \\
                           r_2, & \hbox{\text{if}\;}T_{2k}\leq i<T_{2k+1},
                         \end{array}
                       \right.
                \end{eqnarray*}
where $0<r_1<\frac{1}2$ and  $0<r_2<\frac{1}3$.  Put
\begin{eqnarray*}
                p_{i,j}&=& \left\{
                         \begin{array}{ll}
                           p_{1,j}, & \hbox{\text{if}\;  } T_{2k-1}\leq i<T_{2k},\;1\leq j\leq
2,\\ \\
                           p_{2,j}, & \hbox{\text{if}\; }T_{2k}\leq i<T_{2k+1},\;1\leq j\leq 3,
                         \end{array}
                       \right.
                \end{eqnarray*}
where
$$
\sum_{j=1}^2p_{1,j}=1\quad\text{and}\quad\sum_{j=1}^3p_{2,j}=1.
$$
We therefore conclude from this
$$
\beta_k(q)=\frac{\log\sum_{\sigma\in D_k} \mu(I_\sigma)^q }{-\log
c_1 \ldots c_k}.
$$
Finally, if $N_k$ is the number of integers $i\leq k$  such that
$p_{i,j}=p_{1,j}$, we have
$$
\beta_k(q)=-\frac{\frac{N_k}{k}\log(p_{1,1}^q+p_{1,2}^q)+(1-\frac{N_k}{k})\log(p_{2,1}^q+p_{2,2}^q+p_{2,3}^q)}{\frac{N_k}{k}\log
r_1+(1-\frac{N_k}{k})\log r_2}.
$$
Observing that
$$
\liminf_{k\to+\infty}\frac{N_k}{k}=0\quad\text{and}\quad
\limsup_{k\to+\infty}\frac{N_k}{k}=1.
$$
We can then conclude that
$$
\liminf_{k\to+\infty}\beta_k(q)=\inf\left\{\frac{\log(p_{1,1}^q+p_{1,2}^q)}{-\log
r_1},\;\; \frac{\log(p_{2,1}^q+p_{2,2}^q+p_{2,3}^q)}{-\log
r_2}\right\}
$$
and
$$
\limsup_{k\to+\infty}\beta_k(q)=\sup\left\{\frac{\log(p_{1,1}^q+p_{1,2}^q)}{-\log
r_1},\;\; \frac{\log(p_{2,1}^q+p_{2,2}^q+p_{2,3}^q)}{-\log
r_2}\right\}.
$$
It results that for $0 < q < 1$, we have
$$
{ b}_{\mu}(q)={\mathsf
b}_{\mu}(q)=\underline{\beta}(q)=\liminf_{k\to+\infty}\beta_k(q)=\frac{\log(p_{1,1}^q+p_{1,2}^q)}{-\log
r_1}<
$$
$$
{ B}_{\mu}(q)={\mathsf
B}_{\mu}(q)=\overline{\beta}(q)=\limsup_{k\to+\infty}\beta_k(q)=
\frac{\log(p_{2,1}^q+p_{2,2}^q+p_{2,3}^q)}{-\log r_2}
$$
and, for $q < 0$ or $q > 1$,
$$
{ b}_{\mu}(q)={\mathsf
b}_{\mu}(q)=\underline{\beta}(q)=\liminf_{k\to+\infty}\beta_k(q)=\frac{\log(p_{2,1}^q+p_{2,2}^q+p_{2,3}^q)}{-\log
r_2}<
$$
$$
{ B}_{\mu}(q)={\mathsf
B}_{\mu}(q)=\overline{\beta}(q)=\limsup_{k\to+\infty}\beta_k(q)=\frac{\log(p_{1,1}^q+p_{1,2}^q)}{-\log
r_1}.
$$

\subsection{Example 3}

In the following, we give an example of a measure for which the
lower and upper multifractal Hewitt-Stromberg functions are
different and the Hausdorff and packing dimensions of the level sets
of the local H\"{o}lder exponent $E(\alpha)$ are given by the
Legendre transform respectively of lower and upper multifractal
Hewitt-Stromberg functions.  Take $0<p<\hat{p}\leq 1/2$ and a
sequence of integers
$$
1=t_0<t_1<\ldots<t_n<\ldots, \;\; \text{such that}
\;\;\lim_{n\to+\infty}\frac{t_{n+1}}{t_n}=+\infty. $$ The measure
$\mu$ assigned to the diadic interval of the n-th generation
$I_{\varepsilon_1\varepsilon_2\ldots\varepsilon_n}$ is
$$
\mu\big(I_{\varepsilon_1\varepsilon_2\ldots\varepsilon_n}\big)=\prod_{j=1}^n
\varpi_j,
$$
where
$$
\left\{
  \begin{array}{ll}
  \text{if}\;\; t_{2k-1}\leq j<t_{2k}\;\;\text{for some}\;\;k,\;
\varpi_j=p\;\text{if}\;\varepsilon_j=0,\; \varpi_j=1-p\;\;\text{otherwise}   , &\\
\\
  \text{if}\;\; t_{2k}\leq j<t_{2k+1}\;\;\text{for some}\;\;k,\;
 \varpi_j=\hat{p}\;\text{if}\;\varepsilon_j=0,\; \varpi_j=1-\hat{p}\;\;\text{otherwise.} &
  \end{array}
\right.
$$
Now, for $q\in \mathbb{R}$ we define,

$$
\tau(q)=\log_2\big(p^q+(1-p)^q\big)\quad\text{and}\quad
\hat{\tau}(q)=\log_2\big(\hat{p}^q+(1-\hat{p})^q\big).
$$
It results from \cite{BJ, BBH} that

$$
\left\{
  \begin{array}{ll}
    b_{\mu}(q)={\mathsf
b}_{\mu}(q)=\tau(q)<\hat{\tau}(q)={\mathsf B}_{\mu}(q)=B_{\mu}(q),
\;\; \text{for}\;\; 0<q<1,&
\\   \\
b_{\mu}(q)={\mathsf b}_{\mu}(q)=\hat{\tau}(q)<\tau(q)={\mathsf
B}_{\mu}(q)=B_{\mu}(q), \;\; \text{for}\;\; q<0\;\text{or}\; q>1.
  \end{array}
\right.
$$
Then we have the following result,
\begin{theorem}Let $\alpha\geq0$.
\begin{enumerate}
\item  For $\alpha\in
\Big(-\log_2(1-\hat{p}),\;-\log_2(\hat{p})\Big)$, then we have
$$
{f}_\mu(\alpha)=\mathsf{f}_\mu(\alpha)= b_{\mu}^*(\alpha)={\mathsf
b}_{\mu}^*(\alpha).
$$
\\
\item  For $\alpha\in
\Big(-\log_2(1-\hat{p}),\;-\log_2(\hat{p})\Big)\setminus\Big(
\Big[-B_{\mu_+}'(0),-B_{\mu_-}'(0)\Big]\bigcup
\Big[-B_{\mu_+}'(1),-B_{\mu_-}'(1)\Big]\Big)$, we have
$$
{F}_\mu(\alpha)=\mathsf{F}_\mu(\alpha)={\mathsf B}_{\mu}^*(\alpha)=
B_{\mu}^*(\alpha).
$$
\end{enumerate}
\end{theorem}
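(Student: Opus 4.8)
The plan is to read off both parts from Theorem~\ref{newform} applied to the measure $\mu$ of this example, after verifying its hypotheses with the help of the computations of $b_\mu,B_\mu$ recalled above and the lower estimates of \cite{BJ, BBH}. Write $\tau(q)=\log_2(p^q+(1-p)^q)$ and $\hat\tau(q)=\log_2(\hat p^q+(1-\hat p)^q)$. Since $0<p<\hat p\le 1/2$, elementary convexity of $x\mapsto x^q+(1-x)^q$ on $[0,1/2]$ gives $\tau<\hat\tau$ on $(0,1)$ and $\tau>\hat\tau$ on $\R\setminus[0,1]$, so that $b_\mu=\mathsf b_\mu=\min(\tau,\hat\tau)$ has \emph{concave} corners at $q=0,1$ whereas $B_\mu=\mathsf B_\mu=\max(\tau,\hat\tau)$ has \emph{convex} corners there, both functions being smooth and strictly convex elsewhere. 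Consequently $b_\mu^*=\min(\tau^*,\hat\tau^*)$, while $B_\mu^*$ agrees with $\max(\tau^*,\hat\tau^*)$ off the two closed intervals $[-B_{\mu_+}'(0),-B_{\mu_-}'(0)]$ and $[-B_{\mu_+}'(1),-B_{\mu_-}'(1)]$, on which $B_\mu^*$ is the affine function supplied by the corresponding convex corner. Finally one checks, using $b_\mu(q)/q\to\log_2(1-\hat p)$ as $q\to+\infty$ and $b_\mu(q)/q\to\log_2\hat p$ as $q\to-\infty$ together with trivial monotonicity bounds, that $\alpha_{min}=\sup_{q>0}(-b_\mu(q)/q)=-\log_2(1-\hat p)$ and $\alpha_{max}=\inf_{q<0}(-b_\mu(q)/q)=-\log_2\hat p$; hence the interval in (1) equals $(\alpha_{min},\alpha_{max})$, and on it both $b_\mu^*$ and $B_\mu^*$ are finite and nonnegative (they are concave and vanish at the endpoints of their domains).

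\medskip

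For (1), fix $\alpha\in(\alpha_{min},\alpha_{max})$. Because the corners of $b_\mu$ at $q=0,1$ are concave, such an $\alpha$ equals $-b_\mu'(q)$ for some $q$ at which $b_\mu$ is differentiable and at which the infimum defining $b_\mu^*(\alpha)$ is attained: if $b_\mu^*(\alpha)=\tau^*(\alpha)$ the minimiser of $s\mapsto\alpha s+\tau(s)$ necessarily lies in $(0,1)$ (where $b_\mu=\tau$), and symmetrically with $\hat\tau$ on $\R\setminus[0,1]$; the finitely many $\alpha$ whose minimiser is a corner are handled by passing to the other branch. For this $q$ one has $b_\mu^*(\alpha)=\alpha q+b_\mu(q)\ge0$, $\mathsf b_\mu$ is differentiable at $q$ with $-\mathsf b_\mu'(q)=\alpha$, and the Gibbs-type auxiliary measure constructed in \cite{BJ, BBH} (this is exactly where the lacunarity $t_{n+1}/t_n\to+\infty$ enters) yields $\mathcal H_\mu^{q,b_\mu(q)}(E(\alpha))>0$. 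Theorem~\ref{newform}(1) then gives $\dim_H E(\alpha)=\underline{\dim}_{MB} E(\alpha)=b_\mu^*(\alpha)=\mathsf b_\mu^*(\alpha)$, i.e. $f_\mu(\alpha)=\mathsf f_\mu(\alpha)=b_\mu^*(\alpha)=\mathsf b_\mu^*(\alpha)$. (Equivalently: combine $f_\mu(\alpha)=\dim_H E(\alpha)\le\underline{\dim}_{MB} E(\alpha)=\mathsf f_\mu(\alpha)\le\mathsf b_\mu^*(\alpha)$ from Theorem~\ref{dDmeasure}(2) with the value $b_\mu^*(\alpha)=\mathsf b_\mu^*(\alpha)$ furnished by \cite{BJ, BBH}.)

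\medskip

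For (2), fix $\alpha$ in $(-\log_2(1-\hat p),-\log_2\hat p)$ lying outside $[-B_{\mu_+}'(0),-B_{\mu_-}'(0)]\cup[-B_{\mu_+}'(1),-B_{\mu_-}'(1)]$. By the description of $B_\mu^*$ above, these two intervals are precisely the $\alpha$'s whose minimiser in $\inf_s(\alpha s+B_\mu(s))$ is a corner $q\in\{0,1\}$; for the $\alpha$'s we consider the minimiser $q$ is therefore a differentiability point of $B_\mu$, with $-B_\mu'(q)=\alpha$ and $B_\mu^*(\alpha)=\alpha q+B_\mu(q)\ge0$. The packing (upper box-counting) variant of the construction of \cite{BJ, BBH} produces, for that $q$, a subset of $E(\alpha)$ carrying an auxiliary measure with the one-sided Gibbs density estimate needed to conclude $\mathsf P_\mu^{q,B_\mu(q)}(E(\alpha))>0$. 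Theorem~\ref{newform}(2) then yields $\dim_P E(\alpha)=\overline{\dim}_{MB} E(\alpha)=\mathsf B_\mu^*(\alpha)=B_\mu^*(\alpha)$, that is $F_\mu(\alpha)=\mathsf F_\mu(\alpha)=\mathsf B_\mu^*(\alpha)=B_\mu^*(\alpha)$; the matching upper bound is in any case immediate from Theorem~\ref{dDmeasure}(3), since $\alpha\in(\alpha_{min},\alpha_{max})$.

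\medskip

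The only genuinely hard step is the lower bound, namely the positivity statements $\mathcal H_\mu^{q,b_\mu(q)}(E(\alpha))>0$ (for (1)) and $\mathsf P_\mu^{q,B_\mu(q)}(E(\alpha))>0$ (for (2)) at the relevant values of $q$; this is the standard difficult half of multifractal analysis and rests on constructing auxiliary (Gibbs-like) measures carried by $E(\alpha)$, which for this inhomogeneous dyadic measure is precisely what \cite{BJ, BBH} provide. The contribution here is to transport that information into the Hewitt-Stromberg framework, which is possible because of the identities $b_\mu=\mathsf b_\mu$, $B_\mu=\mathsf B_\mu$ (Propositions~\ref{new_box}, \ref{mod_box}, \ref{ourBB}) and the comparison and sufficient-condition theorems \ref{HHPP}, \ref{dDmeasure} and \ref{newform}; a mildly delicate point inside the lower bound for (2) is to run the auxiliary-measure argument so that it yields positivity of the Hewitt-Stromberg packing measure rather than merely of Olsen's. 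The remaining steps — computing $\alpha_{min},\alpha_{max}$, checking nonnegativity of $b_\mu^*$ and $B_\mu^*$ on the interval, and identifying the affine pieces of $B_\mu^*$ with the two excluded intervals — are routine convexity computations for the binomial functions $\tau$ and $\hat\tau$.
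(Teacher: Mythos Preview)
Your proposal is correct and follows essentially the same route as the paper, which simply refers the reader to \cite[Proposition 9]{BJ} together with Propositions~\ref{new_box}, \ref{mod_box}, \ref{ourBB} and Theorem~\ref{newform}. You have expanded these references into an actual argument: the convexity analysis of $\tau,\hat\tau$, the identification of the interval with $(\alpha_{min},\alpha_{max})$, the sandwich $f_\mu\le\mathsf f_\mu\le\mathsf b_\mu^*=b_\mu^*$ via Theorem~\ref{dDmeasure}, and the appeal to the Gibbs-type constructions of \cite{BJ,BBH} for the positivity hypotheses of Theorem~\ref{newform}. You also correctly flag the one genuinely nonroutine point the paper's one-line proof hides, namely that for part~(2) one needs positivity of the Hewitt--Stromberg packing measure $\mathsf P_\mu^{q,B_\mu(q)}(E(\alpha))$ rather than of Olsen's $\mathcal P_\mu^{q,B_\mu(q)}$, and that the inequality goes the wrong way to deduce one from the other; this does require rerunning the auxiliary-measure argument (or invoking the density/Frostman machinery behind Theorem~\ref{newform}) at the level of fixed-radius packings.
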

\begin{proof}
All of the ideas needed to prove this theorem can be found in
\cite[Proposition 9]{BJ}, Propositions \ref{new_box}, \ref{mod_box}
and \ref{ourBB} and Theorem \ref{newform}.
\end{proof}

\subsection{Example 4}

Given a class of exact dimensional measures (inhomogeneous
multinomial measures) whose support is the whole interval $[0, 1]$,
the multifractal functions $b_\mu$, $\mathsf b_\mu$, $\mathsf B_\mu$
and $B_\mu$ are real analytic and agree at two points only $0$ and
$1$ (for more details, see \cite{SH1}). These measures satisfy our
{\it multifractal formalism} in the sense that, for $\alpha$ in some
interval, the Hausdorff dimension of the level sets $E(\alpha)$ is
given by the Legendre transform of lower multifractal
Hewitt-Stromberg function and their packing dimension by the
Legendre transform of the upper multifractal Hewitt-Stromberg
function. More specifically,
$$
b_{\mu}(q)={\mathsf b}_{\mu}(q)< B_{\mu}(q)={\mathsf B}_{\mu}(q)
\quad\text{for all} \;\; q\notin \{0, 1\},
$$

$$
{f}_\mu(\alpha)=\mathsf{f}_\mu(\alpha)= b_{\mu}^*(\alpha)={\mathsf
b}_{\mu}^*(\alpha)
$$
and
$$
\qquad\qquad\qquad\mathsf{F}_\mu(\alpha)={F}_\mu(\alpha)
=\mathsf{B}_\mu^*(\alpha)={B}_\mu^*(\alpha)\;\;\text{for
some}\;\;\alpha.
$$

                                                                 \section{Open
                                                                 problems}\label{sec5}

Motivated by some results and examples developed in \cite{Ol1, O3,
O2, O4, Ol, Zi}, we therefore ask the following questions.
\begin{enumerate}
\item Let $\mu\in\mathcal{P}_D(\mathbb{R}^n)$,
$E\subseteq\supp\mu$, $p,q\in\mathbb{R}$ and $\alpha\in[0,1].$ Then,
the following problem remains open:
 $$
{\mathsf b}_{\mu}^{\alpha p+(1-\alpha)q}(E)\leq\alpha {\mathsf
B}_{\mu}^p(E)+(1-\alpha){\mathsf b}_{\mu}^q(E)?
 $$
\item Let $q\in \mathbb{R}$ and assume that ${\mathsf  B}_{\mu}(q)={\mathsf
b}_{\mu}(q)$. Are the measures ${\mathsf{
H}^{q,t}_{\mu}}_{\llcorner_{\supp\mu}}$ and ${\mathsf{
P}^{q,t}_{\mu}}_{\llcorner_{\supp\mu}}$ proportional, i.e. does
there exists a constant $c_q > 0$ such that
$$
{\mathsf{ P}^{q,t}_{\mu}}_{\llcorner_{\supp\mu}}= c_q {\mathsf{
H}^{q,t}_{\mu}}_{\llcorner_{\supp\mu}}?
$$
Even though it seems rather unlikely that the lower and upper
multifractal Hewitt-Stromberg measures are proportional in general,
the ratio of the measures ${\mathsf{
H}^{q,t}_{\mu}}_{\llcorner_{\supp\mu}}$ and ${\mathsf{
P}^{q,t}_{\mu}}_{\llcorner_{\supp\mu}}$ might still be bounded. We
therefore ask the following question: Does there exists a number $0
< c_q < +\infty$ such that
$$
{\mathsf{ H}^{q,t}_{\mu}}_{\llcorner_{\supp\mu}}\leq{\mathsf{
P}^{q,t}_{\mu}}_{\llcorner_{\supp\mu}}\leq c_q {\mathsf{
H}^{q,t}_{\mu}}_{\llcorner_{\supp\mu}}?
$$
\item Let $q\in \mathbb{R}$ and assume that ${b}_{\mu}(q)={\mathsf
b}_{\mu}(q)$. Are the measures ${\mathsf{
H}^{q,t}_{\mu}}_{\llcorner_{\supp\mu}}$ and
${{\mathcal{H}}^{q,t}_{\mu}}_{\llcorner_{\supp\mu}}$ proportional,
i.e. does there exists a constant $C_q > 0$ such that
$$
{\mathsf{ H}^{q,t}_{\mu}}_{\llcorner_{\supp\mu}}= C_q
{{\mathcal{H}}^{q,t}_{\mu}}_{\llcorner_{\supp\mu}}?
$$
Even though it seems rather unlikely that the multifractal Hausdorff
measure and the lower multifractal Hewitt-Stromberg measure are
proportional in general, the ratio of the measures
${{\mathcal{H}}^{q,t}_{\mu}}_{\llcorner_{\supp\mu}}$  and ${\mathsf{
H}^{q,t}_{\mu}}_{\llcorner_{\supp\mu}}$ might still be bounded. We
therefore ask the following question: Does there exists a number $0
< C_q < +\infty$ such that
$$
{{\mathcal{H}}^{q,t}_{\mu}}_{\llcorner_{\supp\mu}}\leq{\mathsf{
H}^{q,t}_{\mu}}_{\llcorner_{\supp\mu}}\leq C_q
{{\mathcal{H}}^{q,t}_{\mu}}_{\llcorner_{\supp\mu}}?
$$
\item Let $p, q \in \mathbb{R}$ and assume that ${\mathsf
b}_{\mu}(q)$ is differentiable at $p$ and $q$ with $ {\mathsf
b}_{\mu}'(p) \neq{\mathsf b}_{\mu}'(q)$. Then, the following problem
remains open:
$$
{\mathsf{ H}^{p,{\mathsf b}_{\mu}(p)}_{\mu}}_{\llcorner_{\supp\mu}}
\bot \;\;{\mathsf{ H}^{q,{\mathsf
b}_{\mu}(q)}_{\mu}}_{\llcorner_{\supp\mu}}?
$$

\item Let $p, q \in \mathbb{R}$ and assume that ${\mathsf
B}_{\mu}(q)$ is differentiable at $p$ and $q$ with $ {\mathsf
B}_{\mu}'(p) \neq{\mathsf B}_{\mu}'(q)$. Then, the following problem
remains open:
$$
{\mathsf{ P}^{p,{\mathsf B}_{\mu}(p)}_{\mu}}_{\llcorner_{\supp\mu}}
\bot \;\;{\mathsf{ P}^{q,{\mathsf
B}_{\mu}(q)}_{\mu}}_{\llcorner_{\supp\mu}}?
$$
\item Is it true that the weaker condition ${b}_{\mu}(q)={\mathsf b}_{\mu}(q)$ is sufficient to obtain the
conclusion of Theorem \ref{newform}?\\
\item  Let $\mu\in \mathcal{P}_D(\mathbb{R}^n)$,
$\nu\in \mathcal{P}_D(\mathbb{R}^m)$ and $q,s,t\in \mathbb{R}$.
Assume that
 $c>0$, $E\subseteq \mathbb{R}^n$, $F\subseteq \mathbb{R}^m$,
$H\subseteq \mathbb{R}^{n+m}$ and $H(y)=\big\{x;\; (x,y)\in
H\big\}$. Then, the following problem remains open:
\begin{eqnarray*}
\int {{\mathsf{ H}}}^{q,s}_{\mu}(H(y))~~d{{\mathsf{
H}}}^{q,t}_{\nu}(y)&\leq& c~~ {{\mathsf{ H}}}^{q,s+t}_{\mu\times\nu
}(H)
\end{eqnarray*}
\begin{eqnarray*}
{{\mathsf{ H}}}^{q,s+t}_{\mu\times\nu }(E\times F)&\leq&
c~~{{\mathsf{ H}}}^{q,s}_{\mu}(E)~~{{\mathsf{ P}}}^{q,t}_{\nu}(F)
\end{eqnarray*}
\begin{eqnarray*}
\int {{\mathsf{ H}}}^{q,s}_{\mu}(H(y))~~d{{\mathsf{
P}}}^{q,t}_{\nu}(y)&\leq& c~~ {{\mathsf{ P}}}^{q,s+t}_{\mu\times\nu
}(H)
\end{eqnarray*}
\begin{eqnarray*}
{{\mathsf{ P}}}^{q,s+t}_{\mu\times\nu }(E\times F)&\leq&
c~~{{\mathsf{ P}}}^{q,s}_{\mu}(E)~~{{\mathsf{ P}}}^{q,t}_{\nu}(F)
\end{eqnarray*}
and
$$
{\mathsf b}_{\mu}^q(E)+{\mathsf b}_{\nu}^q(F)\leq {\mathsf
b}_{\mu\times \nu}^q(E\times F) \leq{\mathsf b}_{\mu}^q(E)+{\mathsf
B}_{\nu}^q(F) \leq {\mathsf B}_{\mu\times \nu}^q(E\times F) \leq
{\mathsf B}_{\mu}^q(E)+{\mathsf B}_{\nu}^q(F)?
$$

\item The multifractal Hausdorff dimension function $b_\mu$ and the
lower multifractal Hewitt-Stromberg function ${\mathsf b}_\mu$ do
not necessarily coincide. Motivated by the results developed in
\cite{MiOl}, we conjecture that there exist Borel probability
measures $\mu$ on $\mathbb{R}^n$ such that
$$
b_\mu\neq {\mathsf b}_{\mu}={\mathsf B}_{\mu}={
B}_{\mu}\quad\text{or}\quad b_\mu< {\mathsf b}_{\mu}<{\mathsf
B}_{\mu}={ B}_{\mu}< { \Lambda}_{\mu}\quad\text{for all}\quad q\neq
1.
$$
In particular, this will imply that
$$
{f}_\mu(\alpha)=
b_{\mu}^*(\alpha)<\mathsf{f}_\mu(\alpha)=\mathsf{F}_\mu(\alpha)={F}_\mu(\alpha)=\mathsf{b}_\mu^*(\alpha)
=\mathsf{B}_\mu^*(\alpha)={B}_\mu^*(\alpha)\;\;\text{for
some}\;\;\alpha\geq0.
$$
\end{enumerate}

                                                                \section*{Acknowledgments}

The authors would like to thank Professor {\it Lars Olsen} for his
first reading of this work, the interest he gave to it and his
valuable comment which improves the presentation of the paper. And
they thank the anonymous referees for their valuable comments and
suggestions that led to the improvement of the manuscript.



\end{document}